\def\gr{\mathrm{gr}}
\def\k{{\Bbbk}}
\def\g{{\mathfrak g}}
\def\h{{\mathfrak h}}
\def\a{{\mathfrak a}}
\def\sl{{\mathfrak sl}}
\def\u{{\mathfrak u}}
\def\N{{\mathcal N}}
\def\V{{\mathcal V}}
\def\SS{{\mathbb S}}
\def\CC{{\mathbb C}}
\def\E{{\mathcal E}}
\def\z{{\mathfrak z}}
\def\m{{\mathfrak m}}
\def\l{{\mathfrak l}}
\def\n{{\mathfrak n}}
\def\p{{\mathfrak p}}
\def\P{{\mathfrak P}}
\def\Z{{\mathbb Z}}
\def\N{{\mathbb N}}
\def\O{{\mathcal O}}
\def\End{\mathop{\fam0 End}}
\def\Lie{\mathop{\fam0 Lie}}
\def\Ad{\mathrm{Ad}\,}
\def\ker{\mathrm{Ker}\,}
\def\Mat{\mathop{\fam0 Mat}\nolimits}
\def\sl{\mathop{\mathfrak{sl}}\nolimits}
\def\ad{\mathrm{ad\,}}
\newcommand{\map}{\longrightarrow}
\def\la{\langle}
\def\ra{\rangle}
\theoremstyle{plain}
\newtheorem{theorem}{Theorem}[section]
\newtheorem{corollary}{Corollary}[section]
\newtheorem{prop}{Proposition}[section]
\newtheorem{lemma}{Lemma}[section]
\theoremstyle{definition}
\newtheorem{defn}{Definition}[section]
\newtheorem{question}{Question}[section]
\theoremstyle{remark}
\newtheorem{rem}{Remark}[section]
\def\subtitle#1. {{\medskip\bf#1\par\nobreak\smallskip}}
\def\proclaim#1. {\medbreak\bgroup\noindent\bf#1. \it}
\def\endproclaim{\egroup
\ifdim\lastskip<\medskipamount\removelastskip\medskip\fi}
\def\citedef#1 {\advance\citation by1
  \expandafter\edef\csname#1\endcsname{{\the\citation}}
  \checkendcitedef}
\def\checkendcitedef#1{\ifx#1\endcitedef\else\citedef#1\fi}
\def\cite#1{\csname#1\endcsname}
\newtoks\nextauth
\newif\iffirstauth
\def\checkendauth#1{\ifx\endauth#1
        \iffirstauth\the\nextauth
        \else{} and \the\nextauth\fi,
    \else\iffirstauth\the\nextauth\firstauthfalse
        \else, \the\nextauth\fi
        \expandafter\auth\expandafter#1\fi}
\def\auth#1 #2 {\nextauth={#1 #2}\checkendauth}
\newif\ifinbook
\newif\ifbookref
\def\nextref#1 {\bookreffalse\inbookfalse
    \bibitem[\cite{#1}]{}
    \firstauthtrue
    \ignorespaces}
\def\paper#1{{\it#1,}}
\def\In#1{\inbooktrue In #1,}
\def\book#1{\bookreftrue{\it#1,}}
\def\journal#1{#1\ifinbook,\fi}
\def\bookseries#1{#1,}
\def\Vol#1{\ifbookref Vol. #1,\else\ifinbook Vol. #1,\else{\bf#1}\fi\fi
    \space\ignorespaces}
\def\publisher#1{#1,}
\def\Year#1{\ifbookref #1.\else\ifinbook #1,\else(#1)\fi\fi
    \space\ignorespaces}
\def\Pages#1{\ifinbook pp. #1.\else #1.\fi}
\begin{document}
\title{Commutative quotients of finite $W$-algebras}
\author{Alexander Premet}
\thanks{\nonumber{\it Mathematics Subject Classification} (2000 {\it revision}).
Primary 17B35. Secondary 17B63, 17B81.}
\address{School of Mathematics, University of Manchester, Oxford Road,
M13 9PL, UK} \email{sashap@maths.man.ac.uk}
\begin{abstract}
\noindent Let $e$ be a nilpotent element in a Chevalley form $\g_\Z$
of a simple Lie algebra $\g$ over $\mathbb C$ and let
$\bar{e}=e\otimes 1$ be the corresponding nilpotent element in the
restricted Lie algebra $\g_\k=\g\otimes_\Z\,\k$, where $\k$ is the
algebraic closure of ${\mathbb F}_p$. Assume that $p\gg 0$ and set
$\chi:=\kappa(\bar{e},\,\cdot\,)$, where $\kappa$ is the Killing
form of $\g_\k$. Let $G_\k$ be the simple, simply connected
algebraic $\k$-group with $\g_\k=\Lie(G_\k)$, write ${\mathcal
O}_\k$ for the adjoint $G_\k$-orbit of $\bar{e}$, and denote by
$U(\g,e)$ the finite $W$-algebra associated to $e$. In this paper we
prove that if $U(\g,e)$ has a $1$-dimensional representation, then
the reduced enveloping algebra $U_\chi(\g_\k)$ possesses a simple
module of dimension $p^{d(\bar{e})}$, where $d(\bar{e})$ is the
half-dimension of ${\mathcal O}_\k$.   We also show that if $e$ is
induced from a nilpotent element $e_0$ in a Levi subalgebra
$\mathfrak l$ of $\g$ and the finite $W$-algebra $U([{\mathfrak l},
{\mathfrak l}],e_0)$ admits $1$-dimensional representations, then so
does $U(\g,e)$. This reduces the problem of $1$-dimensional
representations for finite $W$-algebras to the case where $e$ is a
rigid nilpotent element in a Lie algebra of type ${\rm F}_4$, ${\rm
E}_6$, ${\rm E}_7$, ${\rm E}_8$. We use Katsylo's results on
sections of sheets to determine, in many cases, the Krull dimension
of the largest commutative quotient of the algebra $U(\g,e)$
\end{abstract}
\maketitle

\section{\bf Introduction}
\subsection{}\label{1.1} This paper is a continuation of
[\cite{P07'}]. Let $U(\g)$ denote the universal enveloping algebra
of a finite-dimensional simple Lie algebra $\g$ over $\mathbb C$.
Roughly speaking, the main result of [\cite{P07'}] states that the
primitive ideals of $U(\g)$ having rational infinitesimal characters
admit {\it finite} generalised Gelfand--Graev models. One of the
goals of this paper is to remove the unnecessary rationality
assumption from the statement of [\cite{P07'}, Thm.~1.1] and thus
confirm [\cite{P07}, Conjecture~3.2] in full generality; see
Theorem~\ref{main}. This was announced in [\cite{P07'}, p.~745], and
very few changes to the original proof in [\cite{P07'}] are actually
required.

In the meantime two different proofs of [\cite{P07}, Conjecture~3.2]
have appeared in the literature; the first one was found by Losev in
[\cite{Lo}] and the second one by Ginzburg in [\cite{Gi}]. Our proof
relies on the method developed in [\cite{P07'}], the only difference
being that in the present case our base ring is a finitely generated
$\Z$-subalgebra of $\mathbb C$ rather than $\mathbb Q$. In this
setting, we have to produce sufficiently many primes $p$ for which
the reduction procedure described in [\cite{P07'}] leads to
irreducible representations of the $p$-Lie algebra
$\g_\Z\otimes_\Z\overline{\mathbb F}_p$ with $p$-characters
belonging to the modular counterpart of our initial nilpotent orbit;
see Section~4.
\subsection{}\label{1.2}
Denote by $G$ a simple, simply connected algebraic group over
$\mathbb C$,  let $(e,h,f)$ be an $\mathfrak{sl}_2$-triple in the
Lie algebra $\g=\Lie(G)$, and denote by $(\,\cdot\,,\,\cdot\,)$ the
$G$-invariant bilinear form on $\g$ for which $(e,f)=1$. Let
$\chi\in\g^*$ by such that $\chi(x)=(e,x)$ for all $x\in\g$ and
write $U(\g,e)$ for the quantisation of the Slodowy slice
$e+\text{Ker}\,\ad\,f$ to the adjoint orbit $\O:=(\Ad G)e$. Recall
that $U(\g,e)\,=\,{\End}_{\g}\,(Q_\chi)^{\text{op}}$, where $Q_\chi$
is the generalised Gelfand--Graev $\g$-module associated with the
triple $(e,h,f)$. The module $Q_\chi$ is induced from a
$1$-dimensional module ${\mathbb C}_\chi$ over of a nilpotent
subalgebra $\m$ of $\g$ whose dimension equals
$\frac{1}{2}\dim\,\O$. The Lie subalgebra $\m$ is $(\ad h)$-stable,
all eigenvalues of $\ad h$ on $\m$ are negative, and $\chi$ vanishes
on $[\m,\m]$. The action of $\m$ on ${\mathbb C}_\chi={\mathbb C}
1_\chi$ is given by $x(1_\chi)=\chi(x)1_\chi$ for all $x\in\m$; see
[\cite{P02}, \cite{GG}] for more detail. The algebra $U(\g,e)$
shares many remarkable features with the universal enveloping
algebra $U(\g)$ and is often referred to as the enveloping algebra
of the Slodowy slice to $\O$. As an example, $U(\g,e)$ is a
deformation of the universal enveloping algebra $U(\z_\chi)$, where
$\z_\chi$ is the stabiliser of $\chi$ in $\g$; see [\cite{P07}]. It
is also known that $U(\g,e)$ is isomorphic to the Zhu algebra of the
vertex $W$-algebra $W^{\rm aff}(\g,e)$. The Zhu algebra of $W^{\rm
aff}(\g,e)$ is, in turn, isomorphic to the finite $W$-algebra
$W^{\rm fin}(\g,e)$ associated with $\g$ and $e$; see [\cite{DDeC}]
and [\cite{DeSK}].
\subsection{}\label{1.3} In [\cite{P07}], the author conjectured
that every algebra $U(\g,e)$ admits a $1$-dimensional
representation; see [\cite{P07}, Conjecture~3.1(1)]. In [\cite{Lo}],
Losev proved this conjecture for $\g$ classical. In this paper, we
take another step towards proving [\cite{P07}, Conjecture~3.1(1)].
Recall that $\O$ is said to be {\it induced} from a nilpotent orbit
$\O_0$ in a Levi subalgebra $\l$ of $\g$, if $\O$ intersects densely
with the Zariski closed set $\overline{\O}_0+\n$, where $\n$ is the
nilradical of a parabolic subalgebra of $\g$ with Levi component
$\l$. If $\O$ is not induced, then one says that $\O$ is a {\it
rigid} orbit.
\begin{theorem}\label{A}
Suppose the orbit $\O$ is induced from a nilpotent orbit $\O_0$ in a
proper Levi subalgebra $\l$ of $\g$, and let $e_0\in\O_0$. If the
finite $W$-algebra $U([\l,\l],e_0)$ admits a $1$-dimensional
representation, then so does $U(\g,e)$.
\end{theorem}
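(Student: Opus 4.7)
The plan is to combine modular reduction with parabolic induction of reduced enveloping algebras. By Theorem~\ref{main} applied both to $\g$ and to $[\l,\l]$, the existence of a $1$-dimensional representation of $U(\g,e)$ is, for $p\gg 0$, equivalent to the existence of a simple $U_\chi(\g_\k)$-module of the minimal Kac--Weisfeiler dimension $p^{d(\bar e)}$; analogously, a $1$-dimensional representation of $U([\l,\l],e_0)$ corresponds to a simple $U_{\chi_0}(([\l,\l])_\k)$-module of dimension $p^{d(\bar e_0)}$, where $\chi_0\in[\l,\l]_\k^*$ comes from $\bar e_0$ via the Killing form on $[\l,\l]_\k$. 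The strategy is therefore to manufacture such a small simple module for $\g_\k$ out of the one guaranteed for $[\l,\l]_\k$.

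Concretely, I would pick a representative $e\in\O_0+\n$ of the dense orbit, where $\n$ is the nilradical of a parabolic $\p$ with Levi component $\l$; then for $p\gg 0$ the modular counterpart decomposes as $\bar e=\bar e_0+\bar e_1$ with $\bar e_1\in\n_\k$, and $\chi$ restricts to $\chi_0$ on $[\l,\l]_\k$. The hypothesis on $U([\l,\l],e_0)$, combined with the reduction procedure of [\cite{P07'}] as refined in this paper, supplies for infinitely many $p$ a simple $U_{\chi_0}(([\l,\l])_\k)$-module $V_0'$ of dimension $p^{d(\bar e_0)}$. Next I would extend $V_0'$ to a $U_{\chi|_\l}(\l_\k)$-module by tensoring with a $1$-dimensional $\z(\l_\k)$-module on which the centre acts by a character compatible with $\chi$; such a character exists over $\k$ for $p\gg 0$ once the usual restricted ($p$-th power) constraint is arranged. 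Inflating along $\p_\k\onto\l_\k$, with $\n_\k$ acting via $\chi|_{\n_\k}$, then produces a $U_\chi(\p_\k)$-module $V_0$, still of dimension $p^{d(\bar e_0)}$.

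Parabolic induction yields $V:=U_\chi(\g_\k)\otimes_{U_\chi(\p_\k)}V_0$. The PBW structure of reduced enveloping algebras gives $\dim V=p^{\dim\n}\cdot p^{d(\bar e_0)}$, which equals $p^{d(\bar e)}$ by the standard identity $\dim\O=\dim\O_0+2\dim\n$ for induced orbits. Since Premet's theorem on the Kac--Weisfeiler conjecture forces every non-zero $U_\chi(\g_\k)$-module to have dimension divisible by $p^{d(\bar e)}$, the module $V$ is automatically simple. Applying Theorem~\ref{main} in the reverse direction then delivers the required $1$-dimensional representation of $U(\g,e)$.

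The main obstacle will be keeping the various reductions mutually compatible across an infinite set of primes $p$: one has to choose $e\in\O_0+\n$ so that for a Zariski-dense set of primes the reduction $\bar e$ genuinely lies in the induced orbit $G_\k\cdot(\bar e_0+\n_\k)$ (a density argument on the modular counterpart of $\overline{\O}_0+\n$), and one must realize the extension of $V_0'$ from $[\l,\l]_\k$ to $\l_\k$ so that the prescribed value of $\chi$ on $\z(\l_\k)$ actually occurs as a character of the restricted enveloping algebra for all but finitely many $p$. Once these compatibilities are secured, the dimension count combined with Kac--Weisfeiler gives simplicity for free, and the return to characteristic zero is handled by the main correspondence.
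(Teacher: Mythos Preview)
Your strategy is essentially the paper's own: pass to characteristic $p$, use the hypothesis on $U([\l,\l],e_0)$ to produce a small simple $U_{\xi_0}(\l_\k)$-module, parabolically induce to a $U_\xi(\g_\k)$-module of dimension exactly $p^{d(e)}$ (forced simple by Kac--Weisfeiler divisibility), and then return to characteristic zero. This is precisely how Theorem~\ref{rigid} is proved.

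The one substantive correction is that Theorem~\ref{main} is not the bridge you want: that theorem concerns primitive ideals and Whittaker models, not the correspondence between $1$-dimensional $U(\g,e)$-modules and small $U_\chi(\g_\k)$-modules. The forward passage (from a $1$-dimensional representation of $U([\l,\l],e_0)$ to a small simple $U_{\xi_0}([\l,\l]_\k)$-module) is Theorem~\ref{U-U}; the return trip goes via Lemma~\ref{Q-eta}(iii) (the Morita equivalence $U_\xi(\g_\k)\cong\Mat_{p^{d(e)}}(U_\xi(\g_\k,e))$), then Lemmas~\ref{Q-eta}(iv) and~\ref{L1} to lift to $U(\g_\k,e)$, and finally Corollary~\ref{1-dim rep} to transfer to $\mathbb{C}$. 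Also, two of your anticipated obstacles dissolve on inspection: since $e_0+e_1\in\p_\k$ and the invariant form pairs $\u_\k$ with $\u_\k^-$, the linear form $\chi$ already vanishes on both $\u_\k$ and $\z(\l_\k)$, so the inflation to $\p_\k$ uses the trivial $\u_\k$-action and no compatibility constraint on the central character arises.
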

Theorem~\ref{A} is proved in Section~3. Combined with [\cite{Lo},
Thm.~1.2.3(1)] it reduces proving [\cite{P07}, Conjecture~3.1(1)] to
the case of rigid nilpotent orbits in exceptional Lie algebras. We
say that $\g$ is {\it well-behaved} if for any proper Levi
subalgebra $\l$ of $\g$ and any nilpotent element $e_0\in\l$ the
finite $W$-algebra $U([\l,\l],e_0)$ admits a $1$-dimensional
representation. In view of [\cite{Lo}, Thm.~1.2.3(1)] the Lie
algebras of types ${\rm A_\ell,\,B_\ell,\,C_\ell
,\,D_\ell,\,G_2,\,F_4,\,E_6}$ are well-behaved.

Given an associative algebra $\Lambda$ we denote by $\Lambda^{\rm
ab}$ the factor-algebra $\Lambda/\Lambda\cdot[\Lambda,\Lambda]$,
where $\Lambda\cdot[\Lambda,\Lambda]$ is the ideal of $\Lambda$
generated by all commutators $[a,b]$ with $a,b\in\Lambda$. Clearly,
$\Lambda^{\rm ab}$ is the largest commutative quotient of $\Lambda$.
Since $U(\g,e)$ is Noetherian, by [\cite{P02}, 4.6], so is the
commutative $\CC$-algebra $U(\g,e)^{\rm ab}$. By Hilbert's
Nullstellensatz, the maximal spectrum ${\E}:={\rm Specm}\,U(\g,
e)^{\rm ab}$ parametrises the $1$-dimensional representations of
$U(\g,e)$. Our main goal in Section~3 is to determine the Krull
dimension of the algebra $U(\g,e)^{\rm ab}$ under the assumption
that $\g$ is well-behaved. In proving the main results of Section~3
we shall rely on Borho's classification of sheets in semisimple Lie
algebras and Katsylo's results on sections of sheets.

Given $x\in \g$ we denote by $G_x$ the centraliser of $x$ in $G$.
For $d\in\N$, set $\g^{(d)}:=\{x\in\g\,|\,\,\dim\,G_x=d\}$. The
irreducible components of the quasi-affine variety $\g^{(d)}$ are
called {\it sheets} of $\g$. The sheets are $(\Ad G)$-stable,
locally closed subsets of $\g$. It is well-known that  every sheet
contains a unique nilpotent orbit and there is a bijection between
the sheets of $\g$ and the $G$-conjugacy classes of pairs
$(\l,\O_0)$, where $\l$ is a Levi subalgebra of $\g$ and $\O_0$ is a
rigid nilpotent orbit in $[\l,\l]$.

If $\l$ if a Levi subalgebra of $\g$, then the centre $\z(\l)$ of
$\l$ is a toral subalgebra of $\g$. Denote by $\z(\l)_{\rm reg}$ the
set of all $z\in\z(\l)$ for which $\ad\,z$ acts invertibly $\g/\l$.
Given a nilpotent element $e_0\in [\l,\l]$ define ${\mathcal
D}(\l,e_0):=(\Ad G)\cdot (e_0+\z(\l)_{\rm reg})$, a locally closed
subset of $\g$, and call ${\mathcal D}(\l, e_0)$ a {\it
decomposition class} of $\g$. By [\cite{Borh}], every sheet
${\mathcal S}$ of $\g$ contains a unique open decomposition class.
Moreover, if ${\mathcal D}(\l, e_0)$ is such a class, then
$\O_0:=(\Ad L)\cdot e_0$ is rigid in $[\l,\l]$ and the $(\Ad
G)$-orbit induced from $\O_0$ is contained in $\mathcal S$ (here $L$
is the Levi subgroup of $G$ with $\Lie(L)=\l$).

The group $C(e):=G_e\cap G_f$ is reductive and its finite quotient
$\Gamma(e):=C(e)/C(e)^\circ$ identifies with the component group of
$G_e$. If ${\mathcal S}(e)$ is a sheet containing $e$, then the set
$X:={\mathcal S}(e)\cap (e+\ker \ad f)$ is $C(e)$-stable and Zariski
closed in $\g$. By [\cite{Kats}], the identity component
$C(e)^\circ$ acts trivially on $X$ and the component group
$\Gamma(e)$ permutes transitively the irreducible components of $X$.
Furthermore, if ${\mathcal D}(\l, e_0)$ is the open decomposition
class of ${\mathcal S}(e)$ and $Y$ is any irreducible component of
$X$, then $\dim\,Y=\dim\,\z(\l)$.

For an algebraic variety $Z$, we denote by ${\rm Comp}(Z)$ the set
of all irreducible components of $Z$. Our main result in Section~3
is the following:
\begin{theorem}\label{B}
Suppose $\g$ is well-behaved and $\O$ is not rigid. Let ${\mathcal
S}_1,\ldots, {\mathcal S}_t$ be the pairwise distinct sheets of $\g$
containing $e\in\O$. Let ${\mathcal D}(\l_i,e_i)$ be the open
decomposition class of ${\mathcal S}_i$ and $X_i={\mathcal S}_i\cap
(e+\ker\ad f)$. Then there is a surjection
$$\tau\colon\,{\rm Comp}(\E)\twoheadrightarrow\,{\rm
Comp}(X_1)\sqcup\ldots\sqcup{\rm Comp}(X_t)$$ such that
$\dim\,{\mathcal Y}=\dim\,\z(\l_i)$ for every ${\mathcal
Y}\in\tau^{-1}({\rm Comp}(X_i))$, where $1\le i\le t$.
\end{theorem}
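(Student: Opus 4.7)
The plan is to exhibit $\tau$ as the bijection on irreducible components induced by a natural $\Gamma(e)$-equivariant closed embedding $\tau_0\colon\E\into e+\ker\ad f$ whose image equals $X_1\sqcup\cdots\sqcup X_t$. To define $\tau_0$, choose Kazhdan-homogeneous generators $\Theta_1,\ldots,\Theta_r$ of $U(\g,e)$ whose principal symbols form a free system of polynomial generators of $\gr U(\g,e)\cong\CC[e+\ker\ad f]$. Their images in $U(\g,e)^{\rm ab}$ generate it as a commutative algebra, so the evaluation map $\eta\mapsto(\eta(\Theta_1),\ldots,\eta(\Theta_r))$ defines a $\Gamma(e)$-equivariant closed embedding $\tau_0\colon\E\into\mathbb A^r\cong e+\ker\ad f$.

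To show $\tau_0(\E)\subseteq X_1\cup\cdots\cup X_t$, for any $\eta\in\E$ consider the $\g$-module $M_\eta:=Q_\chi\otimes_{U(\g,e)^{\rm op}}\CC_\eta$ obtained by twisting $Q_\chi$ by the character $\eta$. This is a finitely generated $\g$-module whose associated variety in $\g$ is the closure of a single $\Ad G$-orbit passing through $\tau_0(\eta)$. Because $\dim_\CC\CC_\eta=1$, the Gelfand--Kirillov dimension of $M_\eta$ coincides with that of $Q_\chi$ itself, namely $\dim\O$. Hence $\dim\,(\Ad G)\tau_0(\eta)=\dim\O$, which forces $\dim G_{\tau_0(\eta)}=\dim G_e$ and places $\tau_0(\eta)$ in some sheet $\mathcal S_i$ through $e$, i.e.\ in $X_i$.

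To hit every irreducible component of every $X_i$, I would apply the construction underlying Theorem~\ref{A} in parametric form. Fix $i$; since $\O$ is not rigid, the Levi $\l_i$ is proper and well-behavedness of $\g$ gives $U([\l_i,\l_i],e_i)^{\rm ab}\ne 0$. Any character of $U([\l_i,\l_i],e_i)$ extends to a $\dim\z(\l_i)$-dimensional family of characters of $U(\l_i,e_i)$ by tensoring with characters of the polynomial factor $S(\z(\l_i))$, and parabolic induction as in Theorem~\ref{A} converts this family into an irreducible closed subvariety $\mathcal Z_i\subseteq\E$ of dimension $\dim\z(\l_i)$ whose $\tau_0$-image has generic point in $\mathcal D(\l_i,e_i)\cap(e+\ker\ad f)$. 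Being an irreducible closed subset of dimension $\dim\z(\l_i)$ inside the closed set $X_i$, the image $\tau_0(\mathcal Z_i)$ coincides with one irreducible component of $X_i$; by the $\Gamma(e)$-equivariance of $\tau_0$ and Katsylo's transitivity of $\Gamma(e)$ on ${\rm Comp}(X_i)$, every component of $X_i$ is then attained.

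Combining the two inclusions with the Katsylo fact that each $X_i$ is Zariski closed in $\g$ yields $\tau_0(\E)=X_1\sqcup\cdots\sqcup X_t$, so $\tau_0$ is a bijective closed immersion onto this disjoint union of closed subvarieties and therefore induces a bijection $\tau\colon{\rm Comp}(\E)\to\bigsqcup_i{\rm Comp}(X_i)$; the required equality $\dim\,\mathcal Y=\dim\z(\l_i)$ for $\mathcal Y\in\tau^{-1}({\rm Comp}(X_i))$ follows since a closed immersion preserves dimensions of components and every component of $X_i$ has dimension $\dim\z(\l_i)$. The main obstacle will be the second paragraph---identifying the associated variety of $M_\eta$ with an $\Ad G$-orbit closure of dimension exactly $\dim\O$ for every character $\eta$---which rests on a careful analysis of the Kazhdan filtration on $Q_\chi$ and the Whittaker-type functor developed in [\cite{P02}, \cite{GG}].
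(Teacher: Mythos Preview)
Your approach is entirely characteristic-zero, whereas the paper's proof is fundamentally a reduction-mod-$p$ argument: it passes to $\k=\overline{\mathbb F}_p$, uses Theorem~\ref{U-hat} to produce a finite morphism $\bar\beta\colon\E(\k)\twoheadrightarrow\SS_\k\cap Y_\chi$ (Proposition~\ref{E-hat}), identifies the target with $\bigcup_i Y_i$ via Propositions~\ref{Y-chi} and~\ref{behave} (the inclusion coming from dimension bounds on simple $U_\eta(\g_\k)$-modules, the reverse inclusion from the well-behaved hypothesis and Theorem~\ref{U-U}), and then transfers the component count back to $\CC$ by Lemma~\ref{base change}. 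In particular the paper only obtains a \emph{surjection} of component sets via a finite morphism, not a bijection via a closed embedding.

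Your argument, as written, has several genuine gaps. First, the Gelfand--Kirillov dimension claim is wrong on both counts: as a $U(\g)$-module $Q_\chi$ has associated variety $\chi+\m^\perp$, hence GK-dimension $\dim\g-d(e)$, while $M_\eta$ has GK-dimension $d(e)$; neither equals $\dim\O=2d(e)$ in general. More to the point, the associated variety of $\mathrm{Ann}_{U(\g)}M_\eta$ is \emph{always} $\overline{\O}$, independently of $\eta$ (this is exactly [\cite{P07}, Thm.~3.1]), so it cannot tell you which sheet $\tau_0(\eta)$ sits in. What you would actually need is to compute the \emph{module} associated variety of $M_\eta$ and show it equals the closure of $(\Ad M)\cdot\tau_0(\eta)$; this requires the delicate compatibility between the Kazhdan filtration on $Q_\chi$ and the passage to $\gr$ that you flag at the end but never carry out, and in fact this is precisely the content that the paper replaces by the mod-$p$ machinery. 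Second, Theorem~\ref{A} in this paper is itself proved by reduction mod $p$ (via Corollary~\ref{1-dim rep}), so there is no characteristic-zero ``parametric'' version available here to manufacture $\dim\z(\l_i)$-dimensional families in $\E$ landing in $\mathcal D(\l_i,e_i)\cap(e+\ker\ad f)$; you would need an independent char-$0$ construction of the type later supplied by Losev's quantized symplectic actions, which lies outside this paper. Third, the $X_i$ are not disjoint as subsets of $e+\ker\ad f$ (they all contain $e$), so $\tau_0(\E)$ cannot literally equal $X_1\sqcup\cdots\sqcup X_t$, and even a bijective closed immersion onto $X_1\cup\cdots\cup X_t$ would not by itself yield a well-defined map to the \emph{formal} disjoint union $\bigsqcup_i\mathrm{Comp}(X_i)$.
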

It follows from Theorem~\ref{B} that if $\g$ is well-behaved and
$\O$ is not rigid, then
$$\dim\,U(\g,e)^{\rm ab}=\,\max_{1\le i\le t}\,\dim\,\z(\l_i).$$
We also show in Section~3 that if $\O$ is rigid and $e\in \O$, then
$\E$ is a finite set (possibly empty). In this case we do not
require $\g$ to be well-behaved.

For $\g=\mathfrak{gl}(N)$, we obtain a much stronger result. Recall
that to any partition $\lambda=(p_n\ge p_{n-1}\ge\cdots\ge p_1)$ of
$N$ there corresponds a nilpotent element
$e_\lambda\in\mathfrak{gl}(N)$ of Jordan type $(p_1,p_2,\ldots,
p_n)$, and any nilpotent element in $\mathfrak{gl}(N)$ is conjugate
to one of the $e_\lambda$'s. At the end of Section~3 we show that
$$U(\mathfrak{gl}(N),e_\lambda)^{\rm ab}\,\cong\,\CC[X_1,\ldots,X_l],\quad\ l=p_n.$$
In proving this isomorphism we use Theorem~\ref{B} and the explicit
presentation of finite $W$-algebras of type $A$ found by
Brundan--Kleshchev in [\cite{BrK}].
\subsection{}\label{1.4} Our proof of Theorems~\ref{A} and \ref{B}
relies on characteristic $p$ methods developed in [\cite{P07'}]. We
have to generalise several technical results proved in
[\cite{P07'}]; see Section~2. The algebra $U(\g,e)$ is defined over
a suitable localisation $A=\Z[d^{-1}]$ of $\Z$. More precisely,
there exists an $A$-subalgebra $U(\g_A,e)$ of $U(\g,e)$ free as an
$A$-module and such that $U(\g,e)\cong U(\g_A,e)\otimes_A\CC$. We
take a sufficiently large prime $p$ invertible in $A$, denote by
$\k$ the algebraic closure of ${\mathbb F}_p$, and set $U(\g_\k,
e)=U(\g_A,e)\otimes_A\k$. Here $\g_\k=\g_\Z\otimes_\Z\k$, where
$\g_\Z$ is a Chevalley $\Z$-form of $\g$ containing $e$.  We
identify $e$ with its image in $\g_\k$ and regard
$\chi=(e,\,\cdot\,)$ as a linear function on $\g_\k$ (this is
possible because the bilinear form $(\,\cdot\,,\,\cdot\,)$ is
$A$-valued).

The subalgebra $\m$ from (\ref{1.2}) is defined over $A$ and we set
$\m_\k:=\m_A\otimes_A\k$, where $\m_A=\m\cap\g_A$ (it can be assumed
that $\m_A$ is a free $A$-module). By construction, the Lie algebra
$\m_\k$ possesses a $1$-dimensional module on which it acts via
$\chi$; we call it $\k_\chi$. We then consider the induced
$\g_\k$-module $Q_{\chi,\,\k}:=U(\g_\k)\otimes_{U(\m_\k)}\,\k_\chi$,
denote by $\rho_\k$ the corresponding representation of $U(\g_\k)$,
and define
$$\widehat{U}(\g_\k,e)\,:=\,({\End}_{\g_\k}\,Q_{\chi,\,\k})^{\rm op}.$$
It is easy to see that $U(\g_\k,e)$ is a subalgebra of
$\widehat{U}(\g_\k,e)$. Let $Z_p=Z_p(\g_\k)$ denote the $p$-centre
of $U(\g_\k)$ (it is generated by all $x^{p}-x^{[p]}$ with
$x\in\g_\k$, where $x\mapsto x^{[p]}$ is the $p$-th power map of the
restricted Lie algebra $\g_\k$). Clearly, $\rho_\k(Z_p)\subseteq
\widehat{U}(\g_\k,e)$. Given a subspace $V$ of $\g_\k$ we write
$Z_p(V)$ for the subalgebra of $Z_p$ generated by all $v^p-v^{[p]}$
with $v\in V$. In Section~2 we prove:
\begin{theorem}\label{C}
The algebra $\widehat{U}(\g_\k,e)$ is generated by $U(\g_\k,e)$ and
$\rho_\k(Z_p)$; moreover, $\widehat{U}(\g_\k,e)$ is a free
$\rho_\k(Z_p)$-module of rank $p^r$, where $r=\dim G_e$. There is a
subspace $\a_\k$ of $\g_\k$ with $\dim\,\a_\k=\frac{1}{2}\dim\,\O$
such that $\widehat{U}(\g_\k,e)\cong U(\g_\k,e)\otimes_\k
Z_p(\a_\k)$ as $\k$-algebras.
\end{theorem}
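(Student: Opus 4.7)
The plan is to adapt the characteristic-$p$ argument of Section~2 of [\cite{P07'}] to the present base ring. First I would construct $\a_\k$ explicitly. Using the Kazhdan grading $\g_\k=\bigoplus_i\g_\k(i)$ coming from $\ad h$, recall that $\m_\k=\g_\k(\le -2)\oplus\ell$ with $\ell\subset\g_\k(-1)$ a Lagrangian for the symplectic form $\chi([\,\cdot\,,\,\cdot\,])$. Fix a Lagrangian $\ell'\subset\g_\k(1)$ complementary to $(\ad e)\ell$ and set $\a_\k:=\g_\k(\ge 2)\oplus\ell'$. By $\sl_2$-theory $\dim\a_\k=\dim\m_\k=d:=\tfrac12\dim\O$, yielding a vector-space splitting $\g_\k=\m_\k\oplus\a_\k\oplus\g_\k^e$. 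Since $\m_\k$ is restricted for $p\gg 0$, direct computation on $1_\chi$ gives $\rho_\k(v^p-v^{[p]})=\chi(v)^p-\chi(v^{[p]})\in\k$ for every $v\in\m_\k$, so $\rho_\k(Z_p(\m_\k))\subseteq\k\cdot 1$. PBW for the splitting $\g_\k=\m_\k\oplus(\a_\k\oplus\g_\k^e)$ then identifies $\rho_\k(Z_p)$ with the polynomial algebra $Z_p(\a_\k)\otimes_\k Z_p(\g_\k^e)$ and shows that $Q_{\chi,\k}$ is free over $\rho_\k(Z_p)$ of rank $p^{d+r}$.

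Next, use the standard identification $\widehat U(\g_\k,e)^{\rm op}\cong Q_{\chi,\k}^{\m_\k,\chi}$ (Whittaker vectors) and localise at the fraction field $K$ of the polynomial ring $\rho_\k(Z_p)$. The generic fibre $Q_{\chi,\k}\otimes_{\rho_\k(Z_p)}K$ becomes a finite-dimensional module over a reduced enveloping algebra $U_{\chi_K}(\g_\k\otimes_\k K)$, where $\chi_K$ lies in the $G_\k(K)$-orbit of $\chi$. By the Skryabin equivalence and the Kac--Weisfeiler dimension formula (as exploited in [\cite{P02}]), the Whittaker subspace has $K$-dimension $p^r$, and flatness over the regular ring $\rho_\k(Z_p)$ upgrades this to freeness of rank $p^r$ of $\widehat U(\g_\k,e)$ over $\rho_\k(Z_p)$. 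A Nakayama-type argument at each closed point of $\mathrm{Spec}\,\rho_\k(Z_p)$ additionally yields $\widehat U(\g_\k,e)=U(\g_\k,e)\cdot\rho_\k(Z_p)$, giving the generation claim.

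Finally, by the modular PBW theorem for finite $W$-algebras, $U(\g_\k,e)\supseteq Z_p(\g_\k^e)$ and is free of rank $p^r$ over it. Multiplication therefore gives a well-defined algebra map $U(\g_\k,e)\otimes_\k Z_p(\a_\k)\to\widehat U(\g_\k,e)$ between two free $\rho_\k(Z_p)$-modules of rank $p^r$, which is surjective by the preceding step and hence an isomorphism. The main obstacle is the generic-rank identification for the Whittaker space: one must ensure that $\chi_K$ is Kac--Weisfeiler-regular in $(\g_\k\otimes K)^*$, so that simple $U_{\chi_K}$-modules attain the lower bound $p^d$ and the Whittaker space has $K$-dimension exactly $p^r$. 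Guaranteeing this already at the level of the arithmetic localisation $A\subset\CC$, so as to obtain the clean statement over $\k$, is where Premet's results on modular nilpotent orbits and the careful choice of $p$ enter.
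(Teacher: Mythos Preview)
Your proposal has genuine gaps. First, the construction of $\a_\k$ is incorrect: you set $\a_\k=\g_\k(\ge 2)\oplus\ell'$ and claim $\g_\k=\m_\k\oplus\a_\k\oplus\g_\k^e$, but $\g_\k^e\subset\bigoplus_{i\ge 0}\g_\k(i)$ meets $\g_\k(\ge 2)$ nontrivially (already $e\in\g_\k(2)\cap\g_\k^e$), so this is not a direct sum. The paper's $\a_\k$ is instead the span of a complementary Lagrangian in $\g_\k(-1)$ together with a basis of $[f,\g_\k]\cap\bigoplus_{i\ge 0}\g_\k(i)$; it is characterised by orthogonality to $\ker\ad f$ inside the complement $\widetilde\a_\k=\a_\k\oplus\g_\k^e$ of $\m_\k$ (equation~(\ref{orthog})). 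Second, your final step asserts $Z_p(\g_\k^e)\subseteq U(\g_\k,e)$ and freeness of rank $p^r$ over it. But $U(\g_\k,e)=U(\g_A,e)\otimes_A\k$ is the base change of a characteristic-zero object, and the $p$-centre has no characteristic-zero precursor; this inclusion is neither proved in the paper nor obvious, and the paper's argument never uses it.

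The generic-fibre argument also fails as stated: the generic point of $\mathrm{Spec}\,\rho_\k(Z_p)$ corresponds (via $\chi+\m_\k^\perp\cong M_\k\times\SS_\k$, Lemma~\ref{action-map}) to a generic point of the Slodowy slice, which for non-regular $e$ is \emph{not} in the orbit of $\chi$ over any field extension. You flag this as the main obstacle, and it is indeed fatal to your rank identification via Kac--Weisfeiler. The paper avoids the issue entirely: Lemma~\ref{Q-eta} shows by an elementary graded argument that $\m_\k\cap\z_\eta=0$ for \emph{every} $\eta\in\chi+\m_\k^\perp$, so $\m_\k$ is $\eta$-admissible and $\dim U_\eta(\g_\k,e)=p^r$ at every closed point. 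Freeness of rank $p^r$ over $\rho_\k(Z_p)$ and the tensor decomposition $\widehat U(\g_\k,e)\cong U(\g_\k,e)\otimes_\k Z_p(\a_\k)$ then follow from an explicit Kazhdan-filtered induction comparing leading terms of $\bar\Theta_k^p$ with $\rho_\k(\bar x_k^p-\bar x_k^{[p]})$ (equation~(\ref{Theta-p})), together with specialisation at closed points; no orbit statement, flatness, or Nakayama argument is required.
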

Let $G_\k$ be a simple, simply connected algebraic $\k$-group with
$\Lie(G_\k)=\g_\k$. Recall that for $\xi\in\g_\k^*$ the {\it reduced
enveloping algebra} $U_\xi(\g_\k)$ is defined as the quotient of
$U(\g_\k)$ by its ideal generated by all $x^p-x^{[p]}-\xi(x)^p$ with
$x\in\g_\k$. One of the challenging open problems in the
representation theory of $\g_\k$ is to show that for every
$\xi\in\g_\k^*$ the reduced enveloping algebra $U_\xi(\g_\k)$ has a
simple module of dimension $p^{(\dim\,\O(\xi))/2}$, where
$\O(\xi)=({\rm Ad}^*\,G_\k)\xi$. As a consequence of Theorem~\ref{C}
we obtain:
\begin{theorem}\label{D}
If the finite $W$-algebra $U(\g,e)$ admits a $1$-dimensional
representation, then for $p\gg 0$ the reduced enveloping algebra
$U_\chi(\g_\k)$ has a simple module of dimension
$p^{(\dim\,\O(\chi))/2}$.
\end{theorem}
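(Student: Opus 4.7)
The plan is to exhibit a simple $U_\chi(\g_\k)$-module of dimension $p^d$, with $d=\frac{1}{2}\dim\O$, by combining a $1$-dimensional representation of $\widehat{U}(\g_\k,e)$ with the generalised Gelfand--Graev module $Q_{\chi,\k}$. First I would reduce the hypothesis modulo $p$: given a $1$-dimensional representation $\theta\colon U(\g,e)\to\CC$, one enlarges $A\subset\CC$ so that $\theta$ restricts to a homomorphism $U(\g_A,e)\to A$; then for $p\gg 0$, specialisation along an algebra homomorphism $A\to\k$ yields a $1$-dimensional representation $\eta_0\colon U(\g_\k,e)\to\k$.

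By Theorem~\ref{C}, $\widehat{U}(\g_\k,e)\cong U(\g_\k,e)\otimes_\k Z_p(\a_\k)$ with $Z_p(\a_\k)$ a polynomial algebra in $d$ generators. For every character $\eta_1\colon Z_p(\a_\k)\to\k$, the tensor $\eta:=\eta_0\otimes\eta_1$ is a $1$-dimensional representation of $\widehat{U}(\g_\k,e)$, and composition with $\rho_\k$ yields a character of $Z_p$ corresponding to some $\xi\in\g_\k^*$. For $x\in\m_\k$ the image $\rho_\k(x^p-x^{[p]})$ is already the scalar $\chi(x)^p-\chi(x^{[p]})$, which equals $\chi(x)^p$ for $p\gg 0$, so $\xi|_{\m_\k}=\chi|_{\m_\k}$ automatically. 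Using the explicit embedding of $\rho_\k(Z_p)$ in $U(\g_\k,e)\otimes Z_p(\a_\k)$, the $d$-parameter family $\eta_1\in\a_\k^*$ combined with the $p$-central character data carried by $\eta_0$ allows one to arrange $\xi=\chi$; this is the content of the prime-selection argument in Section~4, following [\cite{P07'}].

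Once $\xi=\chi$, the $\g_\k$-module $M:=Q_{\chi,\k}\otimes_{\widehat{U}(\g_\k,e)}\k_\eta$ is annihilated by $I_\chi\subset Z_p$ and so is a $U_\chi(\g_\k)$-module. To compute $\dim_\k M$ I would consider the reduced induced module $\widetilde Q:=Q_{\chi,\k}/I_\chi Q_{\chi,\k}\cong U_\chi(\g_\k)\otimes_{U_\chi(\m_\k)}\k_\chi$, which has $\k$-dimension $p^{n-d}$ with $n=\dim\g_\k$; Theorem~\ref{C} implies that the quotient $\widehat{U}(\g_\k,e)/\rho_\k(I_\chi)\widehat{U}(\g_\k,e)$ has $\k$-dimension $p^r=p^{n-2d}$. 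A freeness argument (the modular counterpart of the projectivity of $Q_\chi$ over $U(\g,e)$) then shows that $\widetilde Q$ is free of rank $p^d$ over this quotient, whence $\dim_\k M=p^d$. By the Kac--Weisfeiler divisibility theorem proved in [\cite{P95}], every simple $U_\chi(\g_\k)$-module has dimension divisible by $p^d$, so every simple constituent of $M$ has dimension at least $p^d=\dim_\k M$; therefore $M$ itself is simple.

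The main obstacle is the identification $\xi=\chi$: one must show that the mod-$p$ reduction of the characteristic-zero representation $\theta$ carries the correct $p$-central character on the image of $Z_p(\ker\ad f)$ inside $U(\g_\k,e)$. The ``good prime'' argument required to ensure this compatibility is exactly what forces the hypothesis $p\gg 0$ in the theorem.
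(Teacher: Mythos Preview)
Your proposal identifies the right ingredients up to the point where you obtain a $1$-dimensional representation of $\widehat{U}(\g_\k,e)$, but the step where you ``arrange $\xi=\chi$'' is a genuine gap. The tensor decomposition $\widehat{U}(\g_\k,e)\cong U(\g_\k,e)\otimes_\k Z_p(\a_\k)$ gives you freedom only over the $\a_\k$-factor; the restriction of the resulting $p$-character to $\z_\chi=(\g_\k)_e$ is governed by the values $\eta_0(\bar\Theta_i)$ through the relations $\rho_\k(\bar x_i^p-\bar x_i^{[p]})\equiv \bar\Theta_i^p+\text{(lower terms)}$ established in the proof of Theorem~\ref{C}. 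There is no mechanism---prime selection or otherwise---forcing these to vanish, so you cannot in general force $\xi=\chi$. Your reference to ``the prime-selection argument in Section~4'' is misplaced: that section treats primitive ideals and does not address this issue.

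The paper's proof does \emph{not} attempt to realise $\chi$ directly. Instead it accepts whatever $\eta\in\chi+\m_\k^\perp$ arises, uses Lemma~\ref{Q-eta}(iii) to deduce that $U_\eta(\g_\k)$ has a simple module of dimension $p^{d(e)}$, and then runs a geometric degeneration: the set $\Xi$ of linear functions $\xi$ for which $U_\xi(\g_\k)$ has a two-sided ideal of codimension $p^{2d(e)}$ is Zariski closed, $(\Ad^*G_\k)$-stable, and conical (the last via the Kac--Weisfeiler/Friedlander--Parshall reduction to Levi subalgebras). Writing $\eta=\chi+\sum_{i\le -1}\eta_i$ and applying the cocharacter $\lambda_e$ contracts $\eta$ to $\chi$ inside $\Xi$, whence $\chi\in\Xi$; a short bimodule argument then upgrades ``ideal of codimension $p^{2d(e)}$'' to ``simple module of dimension $p^{d(e)}$''. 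This degeneration step is the essential idea you are missing.
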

Together with Theorem~\ref{A} and [\cite{Lo}, Thm.~1.2.3(1)] this
yields:
\begin{corollary}\label{D'}
If $\g$ is classical and $p\gg 0$, then for any $\xi\in\g_\k^*$ the
reduced enveloping algebra $U_\xi(\g_\k)$ has a simple module of
dimension $p^{(\dim\,\O(\xi))/2}$.
\end{corollary}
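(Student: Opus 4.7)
The plan is to reduce an arbitrary $\xi\in\g_\k^*$ to a nilpotent $p$-character via the Kac--Weisfeiler/Friedlander--Parshall Morita equivalence, and then to apply Theorem~\ref{D} on each simple factor of the resulting Levi. For $\g$ classical and $p\gg 0$ the Killing form on $\g_\k$ is non-degenerate; identify $\g_\k^*\cong\g_\k$ and let $x\in\g_\k$ correspond to $\xi$, with Jordan decomposition $x=x_s+x_n$. Since $p$ is good for $\g$, the centraliser $\l_\k:=\g_\k^{x_s}$ is a Levi subalgebra and $x_n\in[\l_\k,\l_\k]$. Friedlander--Parshall then provides a Morita equivalence between $U_\xi(\g_\k)$-modules and $U_{\xi|_{\l_\k}}(\l_\k)$-modules that multiplies simple module dimensions by $p^{(\dim\g_\k-\dim\l_\k)/2}$.

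Next, since $x_s$ is central in $\l_\k$, the problem reduces---after tensoring with a $1$-dimensional $\z(\l_\k)$-character encoding $\xi_s$---to producing a simple $U_{\chi_n}([\l_\k,\l_\k])$-module of maximal dimension, where $\chi_n=(x_n,\,\cdot\,)$. Decompose $[\l,\l]=\bigoplus_i\g^{(i)}$ into simple ideals, each of classical type since $\g$ is, with $x_n=\sum_i x_n^{(i)}$ correspondingly. Combining [\cite{Lo}, Thm.~1.2.3(1)] (which supplies $1$-dimensional representations of $U(\g^{(i)},x_n^{(i)})$ for classical $\g^{(i)}$) with Theorem~\ref{A} (which propagates this property across induction, thereby closing the class of well-behaved Lie algebras under Levi descent), we obtain a $1$-dimensional representation of each $U(\g^{(i)},x_n^{(i)})$. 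Theorem~\ref{D} applied to each pair then produces, for $p\gg 0$, a simple $U_{\chi_n^{(i)}}(\g_\k^{(i)})$-module of dimension $p^{\dim\O(\chi_n^{(i)})/2}$.

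Taking the outer tensor product of these modules, extending by the $\z(\l_\k)$-character, and inducing back through the Morita equivalence produces a simple $U_\xi(\g_\k)$-module of dimension $p^{(\dim\g_\k-\dim\l_\k)/2+\sum_i\dim\O(\chi_n^{(i)})/2}$. The exponent equals $(\dim\O(\xi))/2$ because $G_\k^\xi=L_\k^{x_n}$ (using $G_\k^{x_s}=L_\k$ together with the identification of coadjoint stabilisers via the form), whence $\dim\O(\xi)=(\dim\g_\k-\dim\l_\k)+\dim(\Ad L_\k)x_n$. The principal obstacle is verifying the Kac--Weisfeiler step uniformly in $\xi$: one must ensure that for $p\gg 0$ depending only on $\g$, every semisimple $x_s\in\g_\k$ has Levi centraliser and that parabolic induction from $\l_\k$ preserves irreducibility with the exact dimension multiplier $p^{(\dim\g_\k-\dim\l_\k)/2}$. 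Both are standard for classical $\g$ in sufficiently large characteristic, but the uniform bound on $p$ must be tracked through the $A$-rationality already implicit in Theorem~\ref{D}.
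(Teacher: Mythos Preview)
Your approach is essentially the same as the paper's: reduce via the Kac--Weisfeiler/Friedlander--Parshall theorem to nilpotent $p$-characters on (simple factors of) a Levi, observe that these factors are again classical, invoke Losev's result to get $1$-dimensional $W$-algebra representations, and then apply Theorem~\ref{D}. The paper's write-up is terser---it phrases the KW step as reducing to nilpotent $\xi$ in $\g_\k$ and cites [\cite{P02}, 2.5] and [\cite{H}, p.~114] rather than spelling out the Levi decomposition---but the content is the same.

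One remark: your invocation of Theorem~\ref{A} is superfluous and the parenthetical justification is slightly off. Theorem~\ref{A} concerns propagation under \emph{induction of nilpotent orbits}, not Levi descent; it is what one would use to reduce the $1$-dimensionality question to rigid orbits. But here you do not need it at all: Losev's [\cite{Lo}, Thm.~1.2.3(1)] already gives $1$-dimensional representations of $U(\g^{(i)},x_n^{(i)})$ for \emph{every} nilpotent $x_n^{(i)}$ in a classical simple $\g^{(i)}$, rigid or not. So you can simply drop the reference to Theorem~\ref{A}. Also, your closing worry about uniformity in $p$ is handled (as in the paper) by the finiteness of nilpotent orbits in $\g$ and in each standard Levi: take $p$ larger than the finitely many thresholds coming from Theorem~\ref{D}.
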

It also follows from Theorems~\ref{A} and \ref{D} that if $\O$ is
induced from $\O_0\subset\l$ and the finite $W$-algebra
$U([\l,\l],e_0)$ with $e_0\in\O_0$ has a $1$-dimensional
representation, then for $p\gg 0$ the reduced enveloping algebra
$U_\chi(\g_\k)$ has a module of dimension $p^{(\dim\,\O(\chi))/2}$.

\bigskip

\noindent{\bf Acknowledgement.} Part of this work was done during my
stay at the Max Planck Institut f{\"u}r Mathematik (Bonn) in the
spring of 2007. I would like to thank the institute for worm
hospitality and support. The results presented here were announced
in my talks at the MSRI workshop on Lie Theory (March 2008) and at
the conference in celebration of the 65th birthday of Victor Kac
(Cortona, Italy, June 2008).

\section{\bf Finite $W$-algebras and their modular analogues}
\subsection{}\label{4.1}
Let $G$ be a simple, simply connected algebraic group over $\mathbb
C$, and $\g=\Lie(G)$. Let $\h$ be a Cartan subalgebra of $\g$ and
$\Phi$ the root system of $\g$ relative to $\h$. Choose a basis of
simple roots $\Pi=\{\alpha_1,\ldots,\alpha_\ell\}$ in $\Phi$, let
$\Phi^+$ be the corresponding positive system in $\Phi$, and put
$\Phi^-:=-\Phi^+$. Let $\g=\n^-\oplus\h\oplus\n^+$ be the
corresponding triangular decomposition of $\g$ and choose a
Chevalley basis ${\mathcal
B}=\{e_\gamma\,|\,\,\gamma\in\Phi\}\cup\{h_\alpha\,|\,\,\alpha\in\Pi\}$
in $\g$. Set ${\mathcal
B}^{\pm}:=\{e_\alpha\,|\,\,\alpha\in\pm\Phi^+\}$. Let $\g_\Z$ and
$U_\Z$ denote the Chevalley $\Z$-form of $\g$ and the Kostant
$\Z$-form of $U(\g)$ associated with $\mathcal B$. Given a
$\Z$-module $V$ and a $\Z$-algebra $A$, we write $V_A:=
V\otimes_{\Z}A$.

Take a nonzero nilpotent element $e\in\g_\Z$ and choose
$f,h\in\g_{\mathbb Q}$ such that $(e,h,f)$ is an $\sl_2$-triple in
$\g_{\mathbb Q}$. Denote by $(\,\cdot\,,\,\cdot\,)$ a scalar
multiple of the Killing form $\kappa$ of $\g$ for which $(e,f)=1$
and define $\chi\in\g^*$ by setting $\chi(x)=(e,x)$ for all $x\in\g$
(it follows from the $\sl_2$-theory that $\kappa(e,f)$ is a positive
integer). Given $x\in\g$ we set ${\mathcal O}(x):=(\Ad G)\cdot x$
and $d(x):=\frac{1}{2}\dim {\mathcal O}(x)$.
\begin{defn}
We call a commutative ring $A$ {\it admissible} if $A$ is a finitely
generated $\Z$-subalgebra of $\mathbb C$, $\kappa(e,f)\in A^\times$,
and all bad primes of the root system of $G$ and the determinant of
the Gram matrix of $(\,\cdot\,,\,\cdot\,)$ relative to a Chevalley
basis of $\g$ are invertible in $A$.
\end{defn}
It is clear from the definition that every admissible ring is a
Noetherian domain. Given a finitely generated $\Z$-subalgebra $A$ of
$\CC$ we denote by $\pi(A)$ the set of all primes $p\in \mathbb N$
such that $A/{\mathfrak P}\cong{\mathbb F}_p$ for some maximal ideal
${\mathfrak P}$ of $A$.

Let $\g(i)=\{x\in\g\,|\,\,[h,x]=ix\}$. Then
$\g=\bigoplus_{i\in\Z}\,\g(i)$, by the $\mathfrak{sl}_2$-theory, and
all subspaces $\g(i)$ are defined over $\mathbb Q$. Also,
$e\in\g(2)$ and $f\in\g(-2)$. We define a skew-symmetric bilinear
form $\la\,\cdot\,,\,\cdot\,\ra$ on $\g(-1)$ by setting $\la
x,y\ra:=(e,[x,y])$ for all $x,y\in\g(-2)$. This skew-symmetric
bilinear form is nondegenerate, hence there exists a basis
$B=\{z_1',\ldots,z_s',z_1,\ldots,z_s\}$ of $\g(-1)$ contained in
$\g_\mathbb Q$ and such that
$$\la z_i',z_j\ra=\delta_{ij},\qquad\ \la z_i,z_j\ra\,=\,\la z_i',z_j'\ra=0
\qquad\quad\ (1\le i,j\le s).$$ As explained in [\cite{P07'}, 4.1],
after enlarging $A$ if need be, one can assume that
$\g_A=\bigoplus_{i\in\,\Z}\,\g_A(i)$, that each
$\g_A(i):=\g_A\cap\g(i)$ is a freely generated over $A$ by a basis
of the vector space $\g(i)$, and that $B$ is a free basis of the
$A$-module $\g_A(-1)$.

Put $\m:=\g(-1)^0\oplus\sum_{i\le  -2}\,\g(i)$ where $\g(-1)^0$
denotes the $\mathbb C$-span of $z_1',\ldots, z_s'$. Then $\m$ is a
nilpotent Lie subalgebra of dimension $d(e)$ in $\g$ and $\chi$
vanishes on the derived subalgebra of $\m$; see [\cite{P02}] for
more detail. It follows from our assumptions on $A$ that
$\m_A=\g_A\cap\m$ is a free $A$-module and a direct summand of
$\g_A$. More precisely, $\m_A=\g_A(-1)^0\oplus\sum_{i\le
-2}\,\g_A(i)$, where $\g_A(-1)^0= \g_A\cap\g(-1)=
Az_1'\oplus\cdots\oplus Az_s'$.

Enlarging $A$ further we may assume that $e,f\in\g_A$ and that
$[e,\g_A(i)]$ and $[f,\g_A(i)]$ are direct summands of $\g_A(i+2)$
and $\g_A(i-2)$, respectively. Then $\g_A(i+2)=[e,\g_A(i)]$ for all
$i\ge 0$; see [\cite{P07'}, 4.1].

Write $\g_e$ for the centraliser of $e$ in $\g$. Similar to
[\cite{P02}, 4.2 and 4.3] we choose a basis $x_1,\ldots,
x_r,x_{r+1},\ldots, x_m$ of the free $A$-module
$\p_A:=\bigoplus_{i\ge 0}\,\g_A(i)$ such that
\begin{itemize}
\item[(a)]
$x_i\in\g_A(n_i)$ for some $n_i\in\Z_+$;
\item[(b)]
$x_1,\ldots, x_r$ is a free basis of the $A$-module $\g_A\cap\g_e$;
\item[(c)]
$x_{r+1},\ldots, x_m \in[f,\g_A]$.
\end{itemize}

\subsection{}\label{4.2}
Let $Q_\chi$ be the generalised Gelfand-Graev $\g$-module associated
to $e$. Recall that $Q_\chi=U(\g)\otimes_{U(\m)}{\mathbb C}_\chi$,
where ${\mathbb C}_\chi={\mathbb C}1_\chi$ is a $1$-dimensional
$\m$-module such that $x\cdot 1_\chi=\chi(x)1_\chi$ for all
$x\in\m$. Given $({\bf a},{\bf b})\in\Z_+^m\times\Z_+^s$ we let
$x^{\bf a}z^{\bf b}$ denote the monomial $x_1^{a_1}\cdots
x_m^{a_m}z_1^{b_1}\cdots z_s^{b_s}$ in $U(\g)$. Set
$Q_{\chi,\,A}:=U(\g_A)\otimes_{U(\m_A)}A_\chi$, where
$A_\chi=A1_\chi$. Note that $Q_{\chi,\,A}$ is a $\g_A$-stable
$A$-lattice in $Q_\chi$ with $\{x^{\bf i}z^{\bf j}\otimes
1_\chi,\,|\,\,({\bf i},{\bf j})\in\Z_+^m\times\Z_+^s\}$ as a free
basis; see [\cite{P07'}] for more detail. Given $({\bf a},{\bf
b})\in\Z_+^m\times Z_+^s$ we set
$$|({\bf a},{\bf b})|_e\,:=\,\,\sum_{i=1}^m\,a_i(n_i+2)+\sum_{i=1}^s\,b_i.$$
According to [\cite{P02}, Thm.~4.6], the algebra
$U(\g,e):=(\End_\g\,Q_\chi)^{\rm op}$ is generated over $\mathbb C$
by endomorphisms $\Theta_1,\ldots,\Theta_r$ such that
\begin{eqnarray}\label{lam} \qquad\ \,\Theta_k(1_\chi)\,=\,\Big(x_k+\sum_{0<|({\bf i},{\bf j})|_e\le
n_k+2}\,\lambda_{{\bf i},\,{\bf j}}^k\,x^{\bf i}z^{\bf
j}\Big)\otimes 1_\chi,\qquad\quad \ 1\le k\le r,\end{eqnarray} where
$\lambda_{{\bf i},\,{\bf j}}^k\in\mathbb Q$ and $\lambda_{{\bf
i},\,{\bf j}}^k=0$ if either $|({\bf i},\,{\bf j})|_e=n_k+2$ and
$|{\bf i}|+|{\bf j}|=1$ or ${\bf i}\ne {\bf 0}$, ${\bf j}={\bf 0}$,
and $i_l=0$ for $l>r$. Moreover, the monomials
$\Theta_1^{i_1}\cdots\Theta_r^{i_r}$ with $(i_1,\ldots,
i_r)\in\Z_+^r$ form a PBW basis of the vector space $U(\g,e)$.

The monomial $\Theta_1^{i_1}\cdots\Theta_r^{i_r}$ is said to have
{\it Kazhdan degree} $\sum_{i=1}^r\,a_i(n_i+2)$. For $k\in\Z_+$ we
let $U(\g,e)_k$ denote the $\mathbb C$-span of all monomials
$\Theta_1^{i_1}\cdots\Theta_r^{i_r}$ of Kazhdan degree $\le k$. The
union $\bigcup_{k\ge 0}\,U(\g,e)_k$ is an increasing algebra
filtration of $U(\g,e)$, called the {\it Kazhdan filtration}; see
[\cite{P02}]. The corresponding graded algebra $\gr\,U(\g,e)$ is a
polynomial algebra in $\gr\,\Theta_1,\ldots,\gr\,\Theta_r$. It is
immediate from [\cite{P02}, Thm.~4.6] that there exist polynomials
$F_{ij}\in{\mathbb Q}[X_1,\ldots, X_r]$, where $1\le i<j\le r$, such
that
\begin{eqnarray}\label{relations}
\qquad\
[\Theta_i,\Theta_j]\,=\,F_{ij}(\Theta_1,\ldots,\Theta_r)\qquad\quad\
\ (1\le i<j\le r).
\end{eqnarray}
Moreover, if $[x_i,x_j]\,\,=\,\,\sum_{k=1}^r \alpha_{ij}^k\, x_k$ in
$\g_e$, then
$$F_{ij}(\Theta_1,\ldots,\Theta_r)\,\equiv\,\sum_{k=1}^r\alpha_{ij}^k\Theta_k
+q_{ij}(\Theta_1,\ldots,\Theta_r)\ \ \ \big({\rm
mod}\,U(\g,e)_{n_i+n_j}\big),$$ where the initial form of $q_{ij}\in
{\mathbb Q}[X_1,\ldots, X_r]$ has total degree $\ge 2$ whenever
$q_{ij}\ne 0$. By [\cite{P07'}, Lemma~4.1], the algebra $U(\g,e)$ is
generated by $\Theta_1,\ldots,\Theta_r$ subject to the relations
(\ref{relations}). As in [\cite{P07'}], we assume that our
admissible ring $A$ contains all $\lambda_{{\bf i},{\bf j}}^k$ in
(\ref{lam}) and all coefficients of the $F_{ij}$'s in
(\ref{relations}).
\subsection{} \label{4.2'}
Let $N_\chi$ denote the ideal of codimension one in $U(\m)$
generated by all $x-\chi(x)$ with $x\in \m$. Then $Q_\chi\cong
U(\g)N_\chi$ as $\g$-modules. By construction, the left ideal
${\mathcal I}_\chi:=U(\g)N_\chi$ of $U(\g)$ is a $\big(U(\g),
U(\m)\big)$-bimodule. The fixed point space $(U(\g)/{\mathcal
I}_\chi)^{\ad \m}$ carries a natural algebra structure given by
$(x+{\mathcal I}_\chi)\cdot (y+{\mathcal I}_\chi)=xy+{\mathcal
J}_\chi$ for all $x,y\in U(\g)$. Moreover, $U(\g)/{\mathcal
I}_\chi\cong Q_\chi$ as $\g$-modules via the $\g$-module map sending
$1+{\mathcal J}_\chi$ to $1_\chi$, and $(U(\g)/{\mathcal
I}_\chi)^{\ad \m}\cong U(\g,e)$ as algebras. Any element of
$U(\g,e)$ is uniquely determined by its effect on the generator
$1_\chi\in Q_\chi$ and the canonical isomorphism between
$(U(\g)/{\mathcal I}_\chi)^{\ad \m}$ and $U(\g,e)$ is given by
$u\mapsto u(1_\chi)$ for all $u\in(U(\g)/{\mathcal I}_\chi)^{\ad
\m}$. It is clear that this isomorphism is defined over $A$. In what
follows we shall identify $Q_\chi$ with $U(\g)/{\mathcal I}_\chi$
and $U(\g,e)$ with $(U(\g)/{\mathcal I}_\chi)^{\ad \m}$.

Let $U(\g)=\bigcup_{j\in\Z}\, {\sf K}_jU(\g)$ be the Kazhdan
filtration of $U(\g)$; see [\cite{GG}, 4.2]. Recall that ${\sf
K}_jU(\g)$ is the $\mathbb C$-span of all products $x_1\cdots x_t$
with $x_i\in\g(n_i)$ and $\sum_{i=1}^t\, (n_i+2)\le j$ (the identity
element is in ${\sf K}_0U(\g)$ by convention). The Kazhdan
filtration on $Q_\chi$ is defined by ${\sf K}_jQ_\chi:=\pi({\sf
K}_jU(\g)),$ where $\pi\colon U(\g)\twoheadrightarrow
U(\g)/{\mathcal I}_\chi$ is the canonical homomorphism; see
[\cite{GG}, 4.3]. It turns $Q_\chi$ into a filtered $U(\g)$-module.
As explained in [\cite{GG}] the Kazhdan grading of $\gr\, Q_\chi$
has no negative components. The Kazhdan filtration of $U(\g,e)$
defined in (\ref{4.2}) is nothing but the filtration of
$U(\g,e)=(U(\g)/{\mathcal I}_\chi)^{\ad \m}$ induced from the
Kazhdan filtration of $Q_\chi$ through the embedding
$(U(\g)/{\mathcal I}_\chi)^{\ad \m}\hookrightarrow Q_\chi$; see
[\cite{GG}] for more detail.

Let $U(\g_A,e)$ denote the $A$-span of all monomials
$\Theta_1^{i_1}\cdots\Theta_r^{i_r}$ with
$(i_1,\ldots,i_r)\in\Z_+^r$. Our assumptions on $A$  guarantee that
$U(\g_A,e)$ is an $A$-subalgebra of $U(\g,e)$ contained in
$(\End_{\g_A}\,Q_{\chi,\,A})^{\rm op}$. It is immediate from the
above discussion that $Q_{\chi,A}$ identifies with the $\g_A$-module
$U(\g_A)/U(\g_A)N_{\chi,A},$ where $N_{\chi,A}$ stands for the
$A$-subalgebra of $U(\m_A)$ generated by all $x-\chi(x)$ with $x\in
\m_A$. Hence $U(\g_A,e)$ embeds into the $A$-algebra
$\big(U(\g_A)/U(\g_A)N_{\chi,A}\big)^{\ad\,\m_A}$. Since
$Q_{\chi,\,A}$ is a free $A$-module with basis $\{x^{\bf i}z^{\bf
j}\otimes 1_\chi,\,|\,\,({\bf i},{\bf j})\in\Z_+^m\times\Z_+^s\}$,
easy induction on Kazhdan degree (based on [\cite{P02}, Lemma~4.5]
and the formula displayed in [\cite{P02}, p.~27]) shows that
\begin{eqnarray}\label{A-algebras}
U(\g_A,e)\,=\,({\End}_{\g_A}\,Q_{\chi,\,A})^{\rm op}\,\cong\,
\big(U(\g_A)/U(\g_A)N_{\chi,A}\big)^{\ad\,\m_A}.
\end{eqnarray}
Repeating verbatim Skryabin's argument in [\cite{P02}, p.~53] one
also observes that $Q_{\chi,\,A}$ is free as a right
$U(\g_A,e)$-module.
\subsection{}\label{4.3}
We now pick $p\in\pi(A)$ and denote by $\k$ the algebraic closure of
${\mathbb F}_p$. Since the form $(\,\cdot\,,\,\cdot\,)$ is
$A$-valued on $\g_A$, it induces a symmetric bilinear form on the
Lie algebra $\g_\k\cong\g_A\otimes_A\k$. We use the same symbol to
denote this bilinear form on $\g_\k$. Let $G_\k$ be the simple,
simply connected algebraic $\k$-group with hyperalgebra
$U_\k=U_\Z\otimes_Z\k$. Note that $\g_\k=\Lie(G_\k)$ and the form
$(\,\cdot\,,\,\cdot\,)$ is $(\Ad\, G_\k)$-invariant and
nondegenerate. For $x\in\g_A$ we set $\bar{x}:=x\otimes 1$, an
element of $\g_\k$. To ease notation we identify $e, f$ with the
nilpotent elements $\bar{e},\bar{f}\in\g_\k$ and $\chi$ with the
linear function $(e,\,\cdot\,)$ on $\g_\k$ (this will cause no
confusion).

The Lie algebra $\g_\k=\Lie(G_\k)$ carries a natural $[p]$-mapping
$x\mapsto x^{[p]}$ equivariant under the adjoint action of $G_\k$.
For every $x\in\g_\k$ the element $x^p-x^{[p]}$ of the universal
enveloping algebra $U(\g_\k)$. The subalgebra of $U(\g_\k)$
generated by all $x^p-x^{[p]}\in U(\g_\k)$ is called the $p$-{\it
centre} of $U(\g_\k)$ and denoted $Z_p(\g_\k)$ or $Z_p$ for short.
It is immediate from the PBW theorem that $Z_p$ is isomorphic to a
polynomial algebra in $\dim\g$ variables and $U(\g_\k)$ is a free
$Z_p$-module of rank $p^{\dim\,\g}$. For every maximal ideal $J$ of
$Z_p$ there is a unique linear function $\eta=\eta_J\in\g_\k^*$ such
that
$$J=\,\langle x^p-x^{[p]}-\eta(x)^p1\,|\,\,\,x\in\g_\k\rangle.$$
Since the Frobenius map of $\k$ is bijective, this enables us to
identify the maximal spectrum ${\rm Specm}\,Z_p$ with $\g_\k^*$.

Given $\xi\in\g_\k^*$ we denote by $I_\xi$ the two-sided ideal of
$U(\g_\k)$ generated by all $x^p-x^{[p]}-\xi(x)^p1$ with
$x\in\g_\k$, and set $U_\xi(\g_\k):=U(\g_k)/I_\xi$. The algebra
$U_\xi(\g_\k)$ is called the {\it reduced enveloping algebra} of
$\g_\k$ associated to $\xi$. The preceding remarks imply that
$\dim_\k U_\xi(\g_\k)=p^{\dim\g}$ and $I_\xi \cap Z_p=J_\xi$, the
maximal ideal of $Z_p$ associated with $\xi$. Every irreducible
$\g_\k$-module is a module over $U_\xi(\g_\k)$ for a unique
$\xi=\xi_V\in\g_\k^*$. The linear function $\xi_V$ is called the
{\it $p$-character} of $V$; see [\cite{P95}] for more detail. By
[\cite{P95}], any irreducible $U_\xi(\g_\k)$-module has dimension
divisible by $p^{(\dim\,\g-\dim\,\z_\xi)/2},$ where
$\z_\xi=\{x\in\g_\k\,|\,\,\xi([x,\g_\k])=0\}$ is the stabiliser of
$\xi$ in $\g_\k$.
\subsection{}\label{4.3'}
For $i\in\Z$, set $\g_\k(i):=\g_A(i)\otimes_A\k$ and put
$\m_{\k}:=\m_A\otimes_A\k$. Due to our assumptions on $A$ the
elements $\bar{x}_1,\ldots, \bar{x}_r$ form a basis of the
centraliser $(\g_\k)_e$ of $e$ in $\g_\k$ and that $\m_{\k}$ is a
nilpotent subalgebra of dimension $d(e)$ in $\g_\k$. Set
$Q_{\chi,\,\k}:=U(\g_\k)\otimes_{U(\m_\k)}\k_{\chi},$ where
$\k_\chi=A_\chi\otimes_A\k\,=\,\k1_{\chi}$. Clearly, $\k1_{\chi}$ is
a $1$-dimensional $\m_\k$-module with the property that
$x(1_\chi)=\chi(x)1_\chi$ for all $x\in\m_\k$. Define
$$\widehat{U}(\g_\k,e)\,:=\,({\End}_{\g_\k}\,Q_{\chi,\,\k})^{\rm op}.$$
It follows from our discussion in (\ref{4.2}) and (\ref{4.2'}) that
$Q_{\chi,\,\k}\cong Q_{\chi,\,A}\otimes_A\k$ as modules over $\g_\k$
and $Q_{\chi,\,\k}$ is a free right module over the $k$-algebra
$$U(\g_\k,e):=U(\g_A,e)\otimes_A\k.$$ Thus we may identify $U(\g_\k,e)$
with a subalgebra of $\widehat{U}(\g_\k,e)$. Note that the algebra
$U(\g_\k,e)$ has $\k$-basis consisting of all monomials
$\bar{\Theta}_1^{i_1}\cdots \bar{\Theta}_{r}^{i_r}$ with
$(i_1,\ldots, i_r)\in\Z_+^r$, where $\bar{\Theta}_i:=\Theta_i\otimes
1\in U(\g_A,e)\otimes_A\k$. Given a polynomial $g\in A[X_1,\ldots,
X_n]$ we let $^p g$ denote the image of $g$ in the polynomial
algebra $\k[X_1,\ldots, X_n]\,=\,A[X_1,\ldots, X_n]\otimes_A\k$.
Since all polynomials $F_{ij}$ are in $A[X_1,\ldots, X_r]$, it
follows from the relations (\ref{relations}) that
\begin{eqnarray}\label{relations'}
\qquad\
[\bar{\Theta}_i,\bar{\Theta}_j]\,=\,{^p\!}F_{ij}(\bar{\Theta}_1,\ldots,\bar{\Theta}_r)\qquad\quad\
\ (1\le i<j\le r).
\end{eqnarray}
\begin{lemma}\label{L1}
The algebra $U(\g_\k,e)$ is generated by the elements
$\bar{\Theta}_1,\ldots,\bar{\Theta}_r$ subject to the relations
(\ref{relations'}).
\end{lemma}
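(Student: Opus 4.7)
The plan is to lift the presentation to the admissible ring $A$ and then base change to $\k$. Let $\tilde\Pi_A$ denote the associative $A$-algebra generated by symbols $T_1,\ldots,T_r$ subject to the relations $[T_i,T_j]=F_{ij}(T_1,\ldots,T_r)$ for $1\le i<j\le r$; this is meaningful because, by the assumptions made at the end of (\ref{4.2}), all coefficients of the $F_{ij}$ belong to $A$. Since these relations have coefficients in $A$, tensoring $-\otimes_A\k$ turns $\tilde\Pi_A$ into the $\k$-algebra presented by (\ref{relations'}), while $U(\g_A,e)$ base-changes to $U(\g_\k,e)$. Hence it suffices to prove that the canonical surjection $\phi_A\colon\tilde\Pi_A\twoheadrightarrow U(\g_A,e)$ sending $T_k\mapsto\Theta_k$ is an isomorphism of $A$-algebras.

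First I would equip the free associative $A$-algebra on $T_1,\ldots,T_r$ with the Kazhdan filtration in which $\deg T_k=n_k+2$, and transfer it to $\tilde\Pi_A$. The crucial input is the degree information recorded after (\ref{relations}): one has $F_{ij}(\Theta_1,\ldots,\Theta_r)=[\Theta_i,\Theta_j]\in U(\g,e)_{n_i+n_j+2}$, and since the ordered monomials of fixed Kazhdan degree are $A$-linearly independent in $U(\g_A,e)$, we may discard any monomials of Kazhdan degree exceeding $n_i+n_j+2$ from $F_{ij}\in A[X_1,\ldots,X_r]$; they must cancel under evaluation at the $\Theta_k$. Thus, without loss of generality, each $F_{ij}$ itself has Kazhdan degree at most $n_i+n_j+2$, which is strictly less than the Kazhdan degree $n_i+n_j+4$ of the product $T_iT_j$. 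Consequently the rewriting rule
\[
T_jT_i\,\rightsquigarrow\,T_iT_j-F_{ij}(T_1,\ldots,T_r)\qquad(i<j)
\]
either strictly decreases the number of adjacent inversions in a monomial of top Kazhdan degree, or replaces it by monomials of strictly smaller Kazhdan degree. A double induction on the pair (Kazhdan degree, inversion count) then yields that the ordered monomials $T_1^{i_1}\cdots T_r^{i_r}$ with $(i_1,\ldots,i_r)\in\Z_+^r$ $A$-span $\tilde\Pi_A$.

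By (\ref{4.2'}) and (\ref{A-algebras}), the ordered monomials $\Theta_1^{i_1}\cdots\Theta_r^{i_r}$ form a free $A$-basis of $U(\g_A,e)$. The homomorphism $\phi_A$ therefore sends the $A$-spanning set of ordered $T$-monomials bijectively onto an $A$-basis of the target; any element of $\ker\phi_A$, expressed as an $A$-combination of ordered $T$-monomials, must vanish. Hence $\phi_A$ is an isomorphism (and, as a by-product, the ordered $T$-monomials form a free $A$-basis of $\tilde\Pi_A$). Tensoring $\phi_A$ with $\k$ over $A$ delivers the desired presentation of $U(\g_\k,e)$ by the relations (\ref{relations'}).

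The only slightly delicate step is the truncation of each $F_{ij}$ to its Kazhdan-degree-$\le n_i+n_j+2$ part without altering the ideal of relations, which rests on the $A$-linear independence of ordered $\Theta$-monomials of pure Kazhdan degree in $U(\g_A,e)$; after this normalisation the argument is the standard integral PBW rewriting, which is insensitive to the characteristic of the base ring and therefore survives passage to $\k$.
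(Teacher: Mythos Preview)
Your proof is correct and rests on the same PBW-straightening idea as the paper's: show that ordered monomials span the algebra presented by the commutator relations (via induction on Kazhdan degree, using that each $F_{ij}$ has Kazhdan degree at most $n_i+n_j+2<n_i+n_j+4$), then conclude from the linear independence of ordered $\Theta$-monomials in the target.

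The only organisational difference is that the paper carries out this argument directly over $\k$ (transplanting the proof of [\cite{P07'}, Lemma~4.1] with $F_{ij}$ replaced by ${}^p\!F_{ij}$), whereas you establish the presentation $\tilde\Pi_A\cong U(\g_A,e)$ once over the admissible ring $A$ and then tensor with $\k$. Your packaging requires the (easy) observation that the presentation base-changes correctly, i.e.\ that $\tilde\Pi_A\otimes_A\k$ is the $\k$-algebra presented by (\ref{relations'}); this follows from right-exactness of $-\otimes_A\k$ applied to $I\hookrightarrow A\langle T_1,\ldots,T_r\rangle\twoheadrightarrow\tilde\Pi_A$. Your remark about truncating $F_{ij}$ is in fact unnecessary: since $F_{ij}(\Theta_1,\ldots,\Theta_r)=[\Theta_i,\Theta_j]\in U(\g,e)_{n_i+n_j+2}$ and the ordered $\Theta$-monomials form an $A$-basis of $U(\g_A,e)$ compatible with the Kazhdan filtration, every monomial of $F_{ij}$ already has Kazhdan degree at most $n_i+n_j+2$; but the observation does no harm.
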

\begin{proof}
We argue as in the proof of [\cite{P07'}, Lemma~4.1]. Let ${\mathbf
I}$ be the two-sided ideal of the free associative algebra
$\k\langle X_1,\ldots, X_r\rangle$ generated by all
$[X_i,X_j]-{^p\!}F_{ij}(X_1,\ldots,X_r)$ with $1\le i<j\le r$. Let
$\bar{X}_i$ denote the image of $X_i$ of in the factor-algebra
${\mathcal U}:=\k\langle X_1,\ldots, X_r\rangle/\mathbf{I}$. There
is a natural algebra epimorphism $\psi\colon\,{\mathcal
U}\twoheadrightarrow U(\g_\k,e)$ sending $\bar{X}_i$ to
$\bar{\Theta}_i$ for all $i$. For $k\in\Z_+$ let ${\mathcal U}_k$
denote the $\k$-span of all products $\bar{X}_{j_1}\cdots
\bar{X}_{j_m}$ with $\sum_{t=1}^m\, (n_{j_t}+2)\le k$ and let
${\mathcal U}'$ be the $\k$-span of all monomials
$\bar{X}_1^{i_1}\cdots \bar{X}_{r}^{i_r}$ with $(i_1,\ldots,
i_r)\in\Z_+^r$.  Double induction on $k$ and $m$ (upward on $k$ and
downward on $m$) based on the relations (\ref{relations'}) shows
that ${\mathcal U}'=\mathcal U$. Since the monomials
$\bar{\Theta}_1^{i_1}\cdots \bar{\Theta}_{r}^{i_r}$ with
$(i_1,\ldots, i_r)\in\Z_+^r$ are linearly independent over $\k$, we
obtain ${\mathcal U}\cong U(\g_\k,e)$, as required.
\end{proof}
Given an associative algebra $\Lambda$ we set $\Lambda^{\rm
ab}:=\Lambda/\Lambda\cdot[\Lambda,\Lambda],$ where
$\Lambda\cdot[\Lambda,\Lambda]$ is the (two-sided) ideal of
$\Lambda$ generated by all commutators $[a,b]=ab-ba$ with
$a,b\in\Lambda$. It is immediate from [\cite{P07'}, Lemma~4.1] that
$U(\g,e)^{\rm ab}$ is isomorphic to the quotient of the polynomial
algebra  $\mathbb{C}[X_1,\ldots,X_r]$ by its ideal generated by all
polynomials $F_{ij}$ with $1\le i<j\le r)$. Given a subfield $K$ of
$\mathbb C$ containing $A$ we denote by $\mathcal E(K)$ the set of
all common zeros of the polynomials $F_{ij}$ in the affine space
$\mathbb{A}^r_K$. Clearly, the $A$-defined Zariski closed set
$\mathcal E(\mathbb C)$ parametrises the $1$-dimensional
representations of the algebra $U(\g,e)$. Let $\mathcal{E}(\k)$
denote the set of all common zeros of the polynomials $^p\!F_{ij}$
in $\mathbb{A}^r_\k$. By Lemma~\ref{L1}, the set $\mathcal{E}(\k)$
parametrises the $1$-dimensional representations of the algebra
$U(\g_\k,e)$. This has the following consequence:
\begin{corollary}\label{1-dim rep}
If the algebras $U(\g_\k,e)$, where $\k=\overline{\mathbb F}_p$,
afford $1$-dimensional representations for infinitely many
$p\in\pi(A)$, then the finite $W$-algebra $U(\g,e)$ has a
$1$-dimensional representation.
\end{corollary}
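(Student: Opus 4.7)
The plan is to argue the contrapositive: assuming $U(\g,e)$ has no one-dimensional representation, I will show that $U(\g_\k,e)$ admits no one-dimensional representation for all but finitely many $p\in\pi(A)$. By the identification of one-dimensional representations of $U(\g,e)$ (resp.\ $U(\g_\k,e)$) with $\mathcal{E}(\mathbb{C})$ (resp.\ $\mathcal{E}(\k)$) recorded just before the corollary, together with Lemma~\ref{L1}, this reduces to showing that if the polynomials $F_{ij}$ have no common zero in $\mathbb{C}^r$, then the reductions ${^p\!}F_{ij}$ have no common zero in $\k^r$ for almost all $p\in\pi(A)$.

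So I would begin by supposing $\mathcal{E}(\mathbb{C})=\emptyset$. The Hilbert Nullstellensatz then produces polynomials $g_{ij}\in\mathbb{C}[X_1,\ldots,X_r]$ with $1=\sum_{i<j}g_{ij}F_{ij}$. The crucial feature of the setup, recorded in Section~\ref{4.2}, is that the structure polynomials $F_{ij}$ themselves have coefficients in $\mathbb{Q}$. Since $\mathbb{C}$ is faithfully flat over $\mathbb{Q}$, the vanishing of $\big(\mathbb{Q}[X_1,\ldots,X_r]/(F_{ij})\big)\otimes_\mathbb{Q}\mathbb{C}$ forces the vanishing of $\mathbb{Q}[X_1,\ldots,X_r]/(F_{ij})$ itself, and I may therefore arrange that $g_{ij}\in\mathbb{Q}[X_1,\ldots,X_r]$. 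Clearing denominators yields a nonzero integer $N$ and polynomials $\tilde{g}_{ij}\in\mathbb{Z}[X_1,\ldots,X_r]$ with $N=\sum_{i<j}\tilde{g}_{ij}\,F_{ij}$ in $A[X_1,\ldots,X_r]$.

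It now remains to reduce modulo $\mathfrak{P}$. For any prime $p\in\pi(A)$ with $p\nmid N$ and any maximal ideal $\mathfrak{P}$ of $A$ with $A/\mathfrak{P}\cong\mathbb{F}_p$, reducing the displayed identity modulo $\mathfrak{P}$ and extending scalars to $\k$ gives $\bar{N}=\sum_{i<j}{^p\!}\tilde{g}_{ij}\cdot{^p\!}F_{ij}$ in $\k[X_1,\ldots,X_r]$; since $\bar{N}$ is a nonzero element of $\mathbb{F}_p\subseteq\k$, it is invertible, and so the ${^p\!}F_{ij}$ generate the unit ideal in $\k[X_1,\ldots,X_r]$. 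Therefore $\mathcal{E}(\k)=\emptyset$, i.e., $U(\g_\k,e)$ has no one-dimensional representation. This confines the primes $p\in\pi(A)$ for which $U(\g_\k,e)$ admits such a representation to the finite set of prime divisors of $N$, contradicting the hypothesis. The only point requiring any thought is the descent of the Nullstellensatz certificate from $\mathbb{C}$ to $\mathbb{Q}$, which is immediate from faithful flatness of $\mathbb{C}/\mathbb{Q}$; the remainder is a routine reduction-mod-$\mathfrak{P}$ computation.
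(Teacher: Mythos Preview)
Your proof is correct and follows the same contrapositive strategy as the paper: obtain a Nullstellensatz certificate that $1$ lies in the ideal generated by the $F_{ij}$, clear denominators to an integral identity, and reduce modulo primes to force $\mathcal{E}(\k)=\varnothing$ for almost all $p$. The paper's version is slightly less direct: it passes from $\mathcal{E}(\overline{\mathbb Q})=\varnothing$ to a certificate $\sum g_{ij}F_{ij}=1$ with $g_{ij}$ over some finite Galois extension $K$ of $\mathbb Q$, then clears denominators into $\mathcal{O}_K[X_1,\ldots,X_r]$ and, for each $p\in\pi(A)$, reduces modulo a prime $\mathfrak{P}$ of the Dedekind ring $\mathcal{O}_K$ lying over $p$ (so that $\mathcal{O}_K/\mathfrak{P}\cong\mathbb{F}_q\hookrightarrow\k$). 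Your observation that the $F_{ij}$ already lie in $\mathbb{Q}[X_1,\ldots,X_r]$, together with faithful flatness of $\mathbb{C}$ over $\mathbb{Q}$, lets you descend the certificate to $\mathbb{Q}$ immediately and clear to $\mathbb{Z}\subseteq A$, so that the reduction can be done directly modulo a maximal ideal of $A$. This is a genuine, if minor, simplification; the paper's route through $\mathcal{O}_K$ would only be needed if the $F_{ij}$ were known to have coefficients in a number field strictly larger than $\mathbb{Q}$.
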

\begin{proof}
Suppose for a contradiction $U(\g,e)$ has no $1$-dimensional
representations. Then $\mathcal{E}(\overline{\mathbb
Q})=\varnothing$, where $\overline{\mathbb Q}$ denotes the algebraic
closure of $\mathbb Q$ in $\mathbb C$. Since $\overline{\mathbb Q}$
is algebraically closed, there exists a finite Galois extension $K$
of $\mathbb Q$ and polynomials $g_{ij}\in K[X_1,\ldots, X_r]$ such
that $\sum_{i,j}\,g_{ij}F_{ij}=1$. Let ${\mathcal O}_K$ denote the
ring of algebraic integers of $K$. Rescaling the coefficients of the
$g_{ij}$'s if necessary, we can find  $h_{ij}\in {\mathcal
O}_K[X_1,\ldots, X_r]$ such that
$\sum_{i,j}\,h_{ij}F_{ij}=\tilde{n}$ for some positive integer
$\tilde{n}$. For each $p\in \pi(A)$ choose $\mathfrak{P}\in{\rm
Spec}\, \mathcal{O}_K$ with $\mathfrak{P}\cap \Z=p\Z$. Since
$\mathcal{O}_K$ is a Dedekind ring, $\mathcal{O}/\mathfrak{P}\cong
\mathbb{F}_q$  for some $p$-power $q$. Let
$$\varphi\colon\,\mathcal{O}[X_1,\ldots,X_r]\longrightarrow\, ({\mathcal
O}_K/\mathfrak{P})[X_1,\ldots,X_r]\hookrightarrow\,
\k[X_1,\ldots,X_r]$$ denote the homomorphism of polynomial algebras
induced by inclusion ${\mathbb F}_q\hookrightarrow\k$. Note that
$\varphi(F_{ij})=\,^p\!F_{ij}$ and $\varphi(\widetilde{n})$ is just
the residue of $\tilde{n}$ modulo $p$. As $\tilde{n}$ has finitely
many prime divisors, we derive that the ideal of $\k[X_1,\ldots,
X_r]$ generated by the $^p\!F_{ij}$'s coincides with $\k[X_1,\ldots,
X_r]$ for almost all $p\in\pi(A)$. As $\mathcal{E}(\k)=\varnothing$
for all such $p$, this implies that the algebra $U(\g_\k,e)$ has no
$1$-dimensional representations for almost all $p\in\pi(A)$. Since
this contradicts our assumption, the corollary follows.
\end{proof}
\subsection{}\label{4.0}
Let $\g_A^*$ be the $A$-module dual to $\g_A$, so that
$\g^*=\g_A^*\otimes_A\mathbb C$ and $\g_\k^*=\g_A^*\otimes_A\k$. Let
$\m_A^\perp$ denote the set of all linear functions on $\g_A$
vanishing on $\m_A$, a free $A$-submodule and a direct summand of
$\g_A^*$ (by our assumptions on $A$). Note that
$\m_A^\perp\otimes_A\mathbb C$ and $\m_A\otimes_A\k$ identify
naturally with with the annihilators
$\m^\perp:=\{f\in\g\,|\,\,f(\m)=0\}$ and
$\m_\k^\perp:=\{f\in\g^*_\k\,|\,\,f(\m_\k)=0\}$, respectively.

For $\eta\in\chi+\m_\k^\perp$ we set
$Q_{\chi}^{\eta}:=Q_{\chi,\,\k}/I_\eta Q_{\chi,\,\k}$, where
$I_\eta$ is the ideal of $U(\g_\k)$ generated by all
$x^p-x^{[p]}-\eta(x)^p1$ with $x\in\g_\k$. Evidently,
$Q_{\chi}^{\eta}$ is a $\g_\k$-module with $p$-character $\chi$.
Note that $Q_\chi^\chi\,=\,Q_\chi^{[p]}$ in the notation of
[\cite{P07'}, 4.3]. Each $\g_\k$-endomorphism $\Theta_i\otimes 1$ of
$Q_{\chi,\,\k}= Q_{\chi,\,A}\otimes_{A}\k$ preserves the submodule
$I_\eta Q_{\chi,\,\k}$, hence induces a $\g_\k$-endomorphism of
$Q_{\chi}^{\eta}$. To ease notation we call this endomorphism
$\theta_i$. Let $U_\eta(\g_\k,e)$ denote the algebra
$\big(\!\End_{\g_\k}\,Q_{\chi}^{\eta}\big)^{\rm op}$. Since the
restriction of $\eta$ to $\m_\k$ coincides with that of $\chi$, the
ideal of $U(\m_\k)$ generated by all $x-\eta(x)$ with $x\in\m_\k$
equals $N_{\chi,\,\k}=N_{\chi,A}\otimes_A\k$ and $\k_\chi=\k_\eta$
as $\m_\k$-modules.

In what follows we require a slight generalisation of [\cite{P07'},
Prop.~4.1].
\begin{lemma}\label{Q-eta}
The following are true:
\begin{itemize}
\item[(i)] $Q_{\chi}^{\eta}\,\cong\,
U_{\eta}(\g_\k)\otimes_{U_{\eta}(\m_\k)}\k_\chi$ as $\g_\k$-modules;

\smallskip

\item[(ii)] $U_{\eta}(\g_\k,e)\,\cong\,\big(U_{\eta}(\g_\k)/
U_{\eta}(\g_\k)N_{\chi,\,\k}\big)^{\ad\m_\k}$;

\smallskip
\item[(iii)] $Q_{\chi}^{\eta}$ is a projective generator for
$U_{\eta}(\g_\k)$ and
$U_{\eta}(\g_\k)\,\cong\,\Mat_{p^{d(e)}}\big(U_{\eta}(\g_\k,e)\big)$;

\smallskip

\item[(iv)] the monomials $\theta_1^{i_1}\cdots\theta_r^{i_r}$ with
$0\le i_k\le p-1$ form a $\k$-basis of $U_{\eta}(\g_\k,e)$.
\end{itemize}
\end{lemma}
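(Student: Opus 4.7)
The four parts mirror [\cite{P07'}, Prop.~4.1], which treats the case $\eta=\chi$; the cosmetic changes needed to accommodate general $\eta\in\chi+\m_\k^\perp$ all stem from the single observation that $\eta-\chi$ vanishes on $\m_\k$, so that $\eta$ and $\chi$ restrict to the same character of $\m_\k$.

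For part (i), I apply the right-exact functor $-\otimes_{U(\m_\k)}\k_\chi$ to the canonical surjection $U(\g_\k)\twoheadrightarrow U_\eta(\g_\k)$, identifying the resulting kernel with $I_\eta Q_{\chi,\k}$ to obtain $Q_\chi^\eta\cong U_\eta(\g_\k)\otimes_{U(\m_\k)}\k_\chi$. The equality $\eta|_{\m_\k}=\chi|_{\m_\k}$ then lets the $U(\m_\k)$-action on $\k_\chi$ factor through $U_\eta(\m_\k)$, which yields the stated isomorphism. Part (ii) is then formal: using (i), I realise $Q_\chi^\eta$ as $U_\eta(\g_\k)/U_\eta(\g_\k)N_{\chi,\k}$ and repeat the Frobenius-reciprocity argument of \ref{4.2'} verbatim.

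The heart of the lemma is part (iii). The corresponding assertion in [\cite{P07'}, Prop.~4.1] is proved by invoking Skryabin's Morita equivalence (see the appendix of [\cite{P02}]), which establishes that the induced module is a projective generator with endomorphism algebra $U_\eta(\g_\k,e)$. The same proof applies here: the only inputs used are that $\chi$ vanishes on $[\m_\k,\m_\k]$ (which forces $\eta$ to vanish there as well, since $\eta-\chi\in\m_\k^\perp$) and that $\dim\m_\k=d(e)$, both unaffected by the shift from $\chi$ to $\eta$. From the resulting matrix-algebra isomorphism $U_\eta(\g_\k)\cong\Mat_{p^{d(e)}}\big(U_\eta(\g_\k,e)\big)$, a dimension count yields $\dim_\k U_\eta(\g_\k,e)=p^{\dim\g-2d(e)}=p^r$.

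For part (iv), Lemma~\ref{L1} shows that the unrestricted PBW monomials $\theta_1^{i_1}\cdots\theta_r^{i_r}$ span $U_\eta(\g_\k,e)$, so it suffices to express each $\theta_k^p$ as a polynomial in the $\theta_j$'s of degree strictly less than $p$ in $\theta_k$; this reduction is carried out by recognising that the action of $\bar\Theta_k^p$ on $Q_\chi^\eta$ is controlled, modulo terms of lower Kazhdan degree, by elements of the $p$-centre of $U(\g_\k)$, which act as scalars via $\eta$. Since there are exactly $p^r$ restricted monomials and the dimension of $U_\eta(\g_\k,e)$ is $p^r$, spanning upgrades to a basis. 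The main obstacle will be part (iii): one must verify that the Skryabin--Premet Morita equivalence extends to general $\eta$ in the Slodowy slice $\chi+\m_\k^\perp$, which boils down to checking that $Q_\chi^\eta$ retains the correct dimension $p^{\dim\g-d(e)}$ and that $\k_\chi$ remains a projective indecomposable $U_\eta(\m_\k)$-module for every such $\eta$.
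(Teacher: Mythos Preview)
Your treatment of (i) and (ii) is fine and essentially matches the paper (which argues (i) by constructing surjections in both directions and comparing dimensions, rather than via right-exactness of tensoring, but the outcome is the same).

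There is, however, a genuine gap in (iii). The two conditions you single out as the ``main obstacle'' are both automatic: since $\eta-\chi\in\m_\k^\perp$ we have $U_\eta(\m_\k)=U_\chi(\m_\k)$ as algebras, so $\k_\chi$ is projective indecomposable over $U_\eta(\m_\k)$ for free, and $\dim Q_\chi^\eta=p^{\dim\g_\k-d(e)}$ follows from (i) by a PBW count. What is \emph{not} automatic, and what [\cite{P02}, Thm.~2.3] actually requires, is that $\m_\k$ be $\eta$-admissible in $\g_\k$; in particular one needs $\m_\k\cap\z_\eta=0$. This is a statement about how $\m_\k$ sits inside $\g_\k$ relative to $\eta$, not merely about the restriction of $\eta$ to $\m_\k$, and it does not follow from the inputs you list. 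The paper handles this by a short graded argument: writing $\eta=(e+a,\,\cdot\,)$ with $a\in\bigoplus_{i\le 1}\g_\k(i)$ and taking a hypothetical $0\ne y=\sum_{i\le -1}y_i\in\m_\k\cap\z_\eta$, the top graded piece $y_d$ is forced to lie in $\m_\k\cap(\g_\k)_e$, which is zero because $(\g_\k)_e\subset\bigoplus_{i\ge 0}\g_\k(i)$. You should include this step.

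Your approach to (iv) also has a circularity. You invoke Lemma~\ref{L1} to conclude that the monomials $\theta_1^{i_1}\cdots\theta_r^{i_r}$ span $U_\eta(\g_\k,e)$, but Lemma~\ref{L1} concerns $U(\g_\k,e)$, and at this stage of the argument there is no reason the natural map $U(\g_\k,e)\to U_\eta(\g_\k,e)$ is surjective (that surjectivity is established only later, in the proof of Theorem~\ref{U-hat}, which itself \emph{uses} part (iv)). The paper avoids this by proving linear independence of the $p^r$ restricted monomials directly---repeating the Kazhdan-filtration argument from [\cite{P02}, Thm.~3.4(i)]---and then invoking the dimension count $\dim_\k U_\eta(\g_\k,e)=p^r$ from (iii) to conclude they form a basis. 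You should reorganise (iv) along these lines.
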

\begin{proof}
Let $\bar{1}_\chi$ be the image of $1_\chi\in Q_{\chi,\,\k}$ in
$Q_\chi^\eta$. By the universality property of induced modules the
is a surjection $\tilde{\alpha}\colon\,
Q_{\chi,\,\k}=U(\g_\k)\otimes_{U(\m_\k)}\k_\chi\twoheadrightarrow\,
U_{\eta}(\g_\k)\otimes_{U_\eta(\m_\k)}\k_\chi$. As $I_\eta
Q_{\chi,\,\k}\subseteq\ker\,\tilde{\alpha}$, it gives rise to an
epimorphism $\alpha\colon\,Q_{\chi}^\eta\twoheadrightarrow\,
U_\eta(\g_\k)\otimes_{U_\eta(\m_\k)}\k_\chi$. On the other hand,
$Q_\chi^\eta$ is generated by its $1$-dimensional
$U_\eta(\m_\k)$-submodule $\k\bar{1}_\chi=\k_\chi$. The universality
property of induced $U_\eta(\g_\k)$-modules now shows that there is
a surjection
$\alpha'\colon\,U_\eta(\g_\k)\otimes_{U_\eta(\m_\k)}\k_\chi\twoheadrightarrow
\, Q_\chi^\eta$. But then $\alpha$ is an isomorphism by dimension
reasons, proving (i). Part~(ii) is an immediate consequence of
part~(i); see [\cite{P02}, p.~10] for more detail.

Suppose $\m_\k\cap\z_\eta$ contains a nonzero element, say $y$, and
write $y=\sum_{i\le -1}\,y_i$ with $y_i\in\g_\k(i)$. Let $d\in\Z$ be
such that $y_d\ne 0$ and $y_i=0$ for $i>d$. Since
$\eta\in\chi+\m_\k^\perp$, we can write $\eta=(e+a\,,\,\cdot\,)$ for
some $a\in\sum_{i\le 1}\,\g_\k(i)$. As $\z_\eta=(\g_\k)_{e+a}$ and
$\z_\chi=(\g_\k)_e$, our choice of $d$ forces
$y_d\in\m_\k\cap\z_\chi$. Since $(\g_\k)_e\subset\sum_{i\ge
0}\,\g_\k(i)$, this is impossible. So $\m_\k\cap\z_\eta=0$, implying
that $\m_\k$ is an $\eta$-admissible subalgebra of dimension $d(e)$
in $\g_\k$; see [\cite{P02}, 2.3 and 2.6]. Part~(iii) now follows
from [\cite{P02}, Thm.~2.3].

By (i) and (ii), the Kazhdan filtration of the module
$Q_{\chi,\,\k}$ indices that on the algebra
$U_\eta(\g_\k,e)=(Q_{\chi,\,\k}/I_\eta Q_{\chi,\,\k})^{\ad\m_\k}$.
Repeating verbatim the argument from the proof of [\cite{P02},
Thm.~3.4(i)] one obtains that the monomials
$\theta_1^{i_1}\cdots\theta_r^{i_r}$ with $0\le i_k\le p-1$ are
linearly independent in $U_\eta(\g_\k,e)$. Since
$\dim\,U_\eta(\g_\k,e)=p^r$ by part~(iii), these monomials form a
basis of $U_\eta(\g_\k,e)$.
\end{proof}
\subsection{}\label{4.4} Recall from (\ref{4.1}) the $A$-basis $\{x_1,\ldots, x_r, x_{r+1},\ldots,x_m\}$
of $\p_A$. Set
$$X_i=\left\{
\begin{array}{ll}
z_i&\mbox{if $\ 1\le i\le s$},\\
x_{r-s+i}&\mbox{if $\ s+1\le i\le m-r+s$}.
\end{array}\right .$$
For ${\bf a}\in \Z_+^{d(e)}$, put $X^{\bf a}:=X_1^{a_1}\cdots
X_{d(e)}^{a_{d(e)}}$ and $\bar{X}^{\bf a}:=\bar{X}_1^{a_1}\cdots
\bar{X}_{d(e)}^{a_{d(e)}}$, elements of $U(\g_A)$ and $U(\g_\k)$,
respectively. By [\cite{P07'}, Lemma~4.2(i)], the monomials $X^{\bf
a}\otimes 1_\chi$ with ${\bf a}\in\Z^{d(e)}$ form a free basis of
the right $U(\g_A,e)$-module $Q_{\chi,A}$.
\begin{lemma}\label{lem4} Let $\bar{1}_\chi$ be the image of  $1_\chi\in Q_{\chi,\,\k}$ in
$Q_\chi^\eta$. For every $\eta\in\chi+\m_\k^\perp$ the right
$U_\eta(\g_\k,e)$-module $Q_\chi^\eta$ is free with basis
$\big\{\bar{X}^{\bf a}\otimes \bar{1}_{\chi}\,|\,\, 0\le a_i\le
p-1\big\}$.
\end{lemma}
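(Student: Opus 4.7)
\medskip
\noindent\textbf{Proof plan for Lemma~\ref{lem4}.}

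My plan is to deduce the lemma by combining the abstract freeness statement in Lemma~\ref{Q-eta}(iii) with a Kazhdan-filtration argument that identifies the specific basis. From Lemma~\ref{Q-eta}(iii) we know that $Q_\chi^\eta$ is a projective generator of $U_\eta(\g_\k)$ and that $U_\eta(\g_\k)\cong\Mat_{p^{d(e)}}\bigl(U_\eta(\g_\k,e)\bigr)$; by the Morita correspondence this forces $Q_\chi^\eta$ to be free of rank $p^{d(e)}$ as a right $U_\eta(\g_\k,e)$-module. Since the claimed basis $\{\bar{X}^{\bf a}\otimes\bar{1}_\chi\,:\,0\le a_i\le p-1\}$ has cardinality exactly $p^{d(e)}$, matching the rank, it is enough to prove either spanning or linear independence over $U_\eta(\g_\k,e)$.

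I would reduce the statement to showing that the $p^{d(e)+r}$ elements
\[
S\,=\,\bigl\{\bar{X}^{\bf a}\otimes\bar{1}_\chi\cdot \theta^{\bf b}\,:\,\,0\le a_i\le p-1,\ 0\le b_k\le p-1\bigr\}
\]
form a $\k$-basis of $Q_\chi^\eta$. By Lemma~\ref{Q-eta}(iv), the monomials $\theta^{\bf b}$ with $0\le b_k\le p-1$ are a $\k$-basis of $U_\eta(\g_\k,e)$, so linear independence of $S$ together with $|S|=p^{d(e)+r}=\dim_\k Q_\chi^\eta$ (which follows from Lemma~\ref{Q-eta}(i) and PBW for $U_\eta(\g_\k)$) would immediately give that $\{\bar{X}^{\bf a}\otimes\bar{1}_\chi\}$ spans $Q_\chi^\eta$ over $U_\eta(\g_\k,e)$; freeness of rank $p^{d(e)}$ then upgrades this to a basis.

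To verify linear independence of $S$ I would transport the Kazhdan filtration of $Q_{\chi,\k}$ (see~(\ref{4.2'})) to $Q_\chi^\eta$ via the canonical surjection. From the formula (\ref{lam}) for $\Theta_k$ it follows that $\theta_k(\bar{1}_\chi)=\bar{x}_k\otimes\bar{1}_\chi$ modulo terms of strictly lower Kazhdan degree, and iterating gives
\[
\theta^{\bf b}(\bar{1}_\chi)\,\equiv\,\bar{x}^{\bf b}\otimes\bar{1}_\chi\pmod{\text{lower Kazhdan degree}}.
\]
Since the $\theta_k$ are $\g_\k$-linear, applying $\bar{X}^{\bf a}$ on the left produces
\[
\bar{X}^{\bf a}\otimes\bar{1}_\chi\cdot\theta^{\bf b}\,=\,\bar{X}^{\bf a}\bar{x}^{\bf b}\otimes\bar{1}_\chi\,+\,\text{lower Kazhdan degree}.
\]
By the PBW theorem applied to $U_\eta(\g_\k)$ with an ordered basis of $\g_\k$ obtained by extending a basis of $\m_\k$ by $\{X_1,\dots,X_{d(e)},x_1,\dots,x_r\}$, the images $\{\bar{X}^{\bf a}\bar{x}^{\bf b}\otimes\bar{1}_\chi\,:\,0\le a_i,b_k\le p-1\}$ form a $\k$-basis of $Q_\chi^\eta\cong U_\eta(\g_\k)\otimes_{U_\eta(\m_\k)}\k_\chi$. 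Consequently the elements of $S$ have pairwise distinct leading Kazhdan-degree terms in a $\k$-basis, hence are $\k$-linearly independent.

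The main obstacle is checking that the Kazhdan filtration of $Q_{\chi,\k}$ descends to $Q_\chi^\eta$ with well-behaved associated graded, and that $\gr_{\rm Kz}\theta_k$ really equals the class of $\bar{x}_k$; the relations defining $U_\eta(\g_\k)$ involve the non-homogeneous elements $x^p-x^{[p]}-\eta(x)^p$, so some care is needed to argue that the leading-term calculation is not disturbed. The way around this is to perform the Kazhdan-degree analysis entirely inside $Q_{\chi,\k}$ (where the filtration is unambiguous) and only then push forward, mimicking the argument from the proof of [\cite{P02}, Thm.~3.4(i)] that was already invoked in the proof of Lemma~\ref{Q-eta}(iv). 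Once this triangularity is in place, the linear-independence/dimension argument above closes the proof.
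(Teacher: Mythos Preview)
Your overall strategy---reduce to showing that the set
$S=\{\bar{X}^{\bf a}\theta^{\bf b}(\bar{1}_\chi):0\le a_i,b_k\le p-1\}$
is a $\k$-basis of $Q_\chi^\eta$ and then read off the module basis---is exactly the one the paper uses. The Morita step is harmless but redundant: once $S$ is a $\k$-basis and the $\theta^{\bf b}$ form a $\k$-basis of $U_\eta(\g_\k,e)$, the $\bar{X}^{\bf a}\otimes\bar{1}_\chi$ are automatically a free basis over $U_\eta(\g_\k,e)$ without invoking Lemma~\ref{Q-eta}(iii).

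The genuine gap is in your triangularity claim. You assert that
$\theta_k(\bar{1}_\chi)\equiv\bar{x}_k\otimes\bar{1}_\chi$
modulo \emph{strictly lower} Kazhdan degree, but formula~(\ref{lam}) says only that the extra terms satisfy $|({\bf i},{\bf j})|_e\le n_k+2$, with equality allowed; the constraint on the $\lambda_{{\bf i},{\bf j}}^k$'s merely forces such top-degree terms to have $|{\bf i}|+|{\bf j}|\ge 2$. So at the top Kazhdan degree $\theta^{\bf b}(\bar{1}_\chi)$ is $\bar{x}^{\bf b}\otimes\bar{1}_\chi$ plus a linear combination of $\bar{x}^{\bf i}\bar{z}^{\bf j}\otimes\bar{1}_\chi$ with the \emph{same} Kazhdan degree but strictly larger total length $|{\bf i}|+|{\bf j}|$. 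Consequently the elements of $S$ do \emph{not} have pairwise distinct leading Kazhdan-degree terms, and your linear-independence argument as written does not go through.

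The fix, which is precisely what the paper does, is to refine the filtration: filter first by Kazhdan degree and then, within each Kazhdan-degree layer, by total length $|{\bf i}|+|{\bf j}|$ (ascending). In this bigrading $\theta^{\bf b}(\bar{1}_\chi)$ really does have $\bar{x}^{\bf b}\otimes\bar{1}_\chi$ as its unique leading term. The paper exploits this via a double induction---upward on Kazhdan degree, downward on $|{\bf i}|+|{\bf j}|$---to show that every PBW monomial $\bar{x}^{\bf i}\bar{z}^{\bf j}\otimes\bar{1}_\chi$ lies in the $\k$-span of $S$; the dimension count $\dim_\k Q_\chi^\eta=p^{d(e)+r}$ then gives linear independence. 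Your approach (prove independence first, deduce spanning from the dimension) is the dual of this and would work equally well once you correct the leading-term analysis to use both gradings.
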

\begin{proof}
The Kazhdan filtration of the $U(\g_\k)$-module $Q_{\chi,\,\k}$
induces that on the factor-module $Q_\chi^\eta=Q_{\chi,\,\k}/I_\eta
Q_{\chi,\,\k}$. For $k\ge 0$ denote by $(Q_\chi^\eta)_k$ the $k$th
component of the Kazhdan filtration of $Q_\chi^\eta$. Call a tuple
${\bf a}\in\Z_+^l$ {\it admissible} if $a_i\le p-1$ for all $i$. By
Lemma~\ref{Q-eta}(iv), the monomials $\theta^{\bf
a}:=\theta_1^{a_1}\cdots\theta_r^{a_r}$, where ${\bf a}$ runs
through the admissible tuples in $\Z_+^r$, form a $\k$-basis of
$U_\eta(\g_\k,e)$. Using (\ref{lam}) and induction on the Kazhdan
degree $k=\sum_{i=1}^ra_i(n_i+2)$ of $\Theta^{\bf a}$ it is easy to
observe that
$$\theta^{\bf a}(\bar{1}_\chi)\,\equiv\, \bar{x}_1^{a_1}\cdots
\bar{x}_r^{a_r}\otimes \bar{1}_\chi+\ \sum_{|({\bf i},{\bf
j})|_e=k,\ |{\bf i}|+|{\bf j}|>|\bf a|}\,\gamma_{{\bf i},{\bf
j}}\,\bar{x}^{\bf i}\bar{z}^{\bf j}\otimes 1_\chi\quad\
(\mathrm{mod}\ (Q_\chi^\eta)_{k-1}$$ for some $\gamma_{{\bf i},{\bf
j}}\in \k$. This relation in conjunction with double induction on
$|({\bf i},{\bf j})|_e$ and $|{\bf i}|+|{\bf j}|$ (upward on $|({\bf
i},{\bf j})|_e$ and downward on $|{\bf i}|+|{\bf j}|$) yields that
every $\bar{x}^{\bf i}\bar{z}^{\bf j}\otimes \bar{1}_\chi$ belongs
to the $\k$-submodule of $Q_{\chi}^\eta$ spanned by the vectors
$\bar{X}^{\bf a}\theta^{\bf b}(\bar{1}_\chi)$ with admissible ${\bf
a}\in\Z_+^{d(e)}$ and ${\bf b}\in\Z_+^r$. Since $\dim_\k\,
Q_\chi^\eta=p^{d(e)+r}$ by Lemma~\ref{Q-eta}(i), these vectors are
linearly independent. The result follows.
\end{proof}
Let $\a_k$ be the $\k$-span of $\bar{X}_1,\ldots, \bar{X}_{d(e)}$ in
$\g_\k$ and put $\widetilde{\a}_\k:=\a_\k\oplus\z_\chi$. By our
assumptions on $x_{r+1},\ldots, x_m$ in (\ref{4.1}) and the
inclusion $\ker\ad f\subset\bigoplus_{i\le 0}\,\g_\k(i)$, we have
that
\begin{equation}\label{orthog}
\a_\k\,=\,\{x\in\widetilde{\a}_\k\,|\,\,\,(x,\ker\ad f)=0\}.
\end{equation}
The bilinear form $(\,\cdot\,,\,\cdot\,)$ allows us to identify the
symmetric algebra $S(\widetilde{\a}_\k)$ with the coordinate ring
$\k[\chi+\m_\k^\perp]$. Given a subspace $V$ in $\g_\k$ we denote by
$Z_p(V)$ the subalgebra of the $p$-centre $Z(\g_k)$ generated by all
$x^p-x^{[p]}$ with $x\in V$. Clearly, $Z_p(V)$ is isomorphic to a
polynomial algebra in $\dim_\k V$ variables. Let $\rho_\k$ denote
the representation of $U(\g_\k)$ in $\End_\k Q_{\chi,\,\k}$.

Our next result is, in a sense, analogous to Velkamp's theorem
[\cite{Ve}] on the structure of the centre of $U(\g_\k)$. Similarity
becomes apparent when one takes for $e$ a regular nilpotent element
in $\g_\k$ and observes that in this special case $U(\g_\k,e)$
identifies with the invariant algebra $U(\g_\k)^{G_\k}$.
\begin{theorem}\label{U-hat}
The following hold for any nilpotent element $e\in\g_\k$:
\begin{itemize}
\item[(i)]
the algebra $\widehat{U}(\g_\k,e)$ is generated by its subalgebras
$U(\g_\k,e)$ and $\rho_\k(Z_p)$;
\smallskip
\item[(ii)]
$\rho_\k(Z_p)\cong  Z_p(\widetilde{\a}_\k)$ and
$\widehat{U}(\g_\k,e)$ is a free $\rho_\k(Z_p)$-module of rank
$p^{r}$;
\smallskip
\item[(iii)]
$\widehat{U}(\g_\k,e)\cong U(\g_\k,e)\otimes_\k Z_p(\a_\k)$ as
$\k$-algebras.
\end{itemize}
\end{theorem}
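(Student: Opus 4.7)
The plan is to prove (iii) directly by constructing an explicit algebra isomorphism, and then to deduce (i) and (ii) as consequences. Since $Z_p$ is central in $U(\g_\k)$, the image $\rho_\k(Z_p(\a_\k))$ commutes with $U(\g_\k,e)$ inside $\widehat{U}(\g_\k,e)$, so multiplication induces a well-defined $\k$-algebra homomorphism
$$\phi\colon\,U(\g_\k,e)\otimes_\k Z_p(\a_\k)\too\widehat{U}(\g_\k,e),\qquad u\otimes z\longmapsto u\cdot\rho_\k(z).$$
I would work throughout with the identification $\widehat{U}(\g_\k,e)\cong(Q_{\chi,\k})^{\ad\m_\k}$, $\varphi\mapsto\varphi(1_\chi)$, borrowed from (\ref{4.2'}), together with the PBW basis of $Q_{\chi,\k}\cong U(\widetilde{\a}_\k)\otimes_\k\k_\chi$ indexed by $({\bf a},{\bf I})\in\Z_+^{d(e)}\times\Z_+^r$.

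For injectivity of $\phi$, evaluating a basis vector $\bar\Theta^{\bf I}\otimes z_{\bf q}$, where $z_{\bf q}:=\prod_{i=1}^{d(e)}(\bar X_i^p-\bar X_i^{[p]})^{q_i}$ runs over a basis of $Z_p(\a_\k)$, on $1_\chi$ yields $\rho_\k(z_{\bf q})\cdot\bar\Theta^{\bf I}(1_\chi)$. Using (\ref{lam}) for the leading form of $\bar\Theta^{\bf I}(1_\chi)$ and the observation that each $\bar X_i^{[p]}$ lies in strictly smaller Kazhdan filtration than $\bar X_i^p$ (for $p\gg 0$ the restricted $p$-power sends each $\g_\k(n)$ with $n\ne 0$ to $0$ by semisimplicity of $\g_\k$, so its Kazhdan weight is at most $2$, whereas the weight of $\bar X_i^p$ is $p(n_i+2)$), this product reduces modulo strictly lower Kazhdan filtration to the PBW monomial $\bar X^{p{\bf q}}\bar x_1^{I_1}\cdots\bar x_r^{I_r}\otimes 1_\chi$. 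These leading monomials are distinct across distinct $({\bf I},{\bf q})$, giving injectivity. For surjectivity I would run a Skryabin-style induction on Kazhdan degree in the spirit of [\cite{P02}, p.~53] and the proof of Lemma~\ref{lem4}: the leading symbol of any $\varphi(1_\chi)\in(Q_{\chi,\k})^{\ad\m_\k}$ is an $\ad\m_\k$-invariant in $S(\widetilde{\a}_\k)$ (with $\m_\k$-action suitably twisted by $\chi$), which in characteristic $p$ must lie in $\k[\bar x_1,\ldots,\bar x_r,\bar X_1^p,\ldots,\bar X_{d(e)}^p]$; indeed the characteristic-zero invariants are exactly $\k[\bar x_1,\ldots,\bar x_r]\cong\gr U(\g,e)$, while $p$-th powers are automatically annihilated by every derivation and generate all further invariants over the $p$-th-power subring. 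Matching leading symbols with an appropriate element of the image of $\phi$ strictly reduces Kazhdan degree, closing the induction.

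With (iii) in hand, part (i) is immediate since $Z_p(\a_\k)\subseteq Z_p$. For (ii), identify $\rho_\k(Z_p)$ via the specialization maps $\widehat{U}(\g_\k,e)\to U_\eta(\g_\k,e)$ from Lemma~\ref{Q-eta}: on $Q_\chi^\eta$ the element $\rho_\k(x^p-x^{[p]})$ acts as the scalar $\eta(x)^p$, and since $\eta(y)=\chi(y)$ is constant for $y\in\m_\k$ while (\ref{orthog}) together with the bilinear-form identification $\k[\chi+\m_\k^\perp]\cong S(\widetilde{\a}_\k)$ shows that the functions $\eta\mapsto\eta(x)^p$ for $x\in\widetilde{\a}_\k$ generate (after Frobenius twist) the whole coordinate ring, one obtains $\rho_\k(Z_p)\cong Z_p(\widetilde{\a}_\k)=Z_p(\a_\k)\otimes_\k Z_p(\z_\chi)$. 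The freeness claim in (ii) then follows from (iii) combined with the fact that $\rho_\k(Z_p(\z_\chi))$ lies inside $U(\g_\k,e)$ as a central subalgebra over which $U(\g_\k,e)$ is free of rank $p^r$, with explicit basis $\bar\Theta_1^{I_1}\cdots\bar\Theta_r^{I_r}$ for $0\le I_k<p$. The main obstacle is the characteristic-$p$ invariant-theory step, namely verifying $S(\widetilde{\a}_\k)^{\ad\m_\k}=\k[\bar x_1,\ldots,\bar x_r,\bar X_1^p,\ldots,\bar X_{d(e)}^p]$; a secondary delicate point is showing that $\rho_\k(Z_p(\z_\chi))$ actually embeds into $U(\g_\k,e)$ (rather than merely into $\widehat{U}(\g_\k,e)$), which demands comparing $\bar\Theta_k^p$ with its restricted $p$-analogue via the relations (\ref{relations'}) and tracking how Kazhdan-lower corrections propagate.
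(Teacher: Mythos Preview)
Your overall strategy---build $\phi\colon U(\g_\k,e)\otimes_\k Z_p(\a_\k)\to\widehat{U}(\g_\k,e)$ directly, prove it is an isomorphism by a leading-term argument on the Kazhdan filtration, and then read off (i) and (ii)---is natural, and the injectivity half is essentially the paper's part~(d). But the paper proceeds in the \emph{opposite} order, proving (ii) first and deducing (iii), precisely because the surjectivity step you propose hides a genuine difficulty that you flag but do not resolve.

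The gap is your invariant-theory claim $(\gr Q_{\chi,\k})^{\ad\m_\k}=\k[\bar x_1,\ldots,\bar x_r,\bar X_1^p,\ldots,\bar X_{d(e)}^p]$. The containment $\supseteq$ is clear, but the reverse is not: the principle ``characteristic-$p$ invariants $=$ characteristic-$0$ invariants times $p$-th powers'' fails for general unipotent actions, and to rescue it here you would have to (a) identify $\gr Q_{\chi,\k}$ with $\k[e+\widetilde{\m}_\k]$ carrying the coadjoint $M_\k$-action, (b) invoke the freeness of that action (Lemma~\ref{action-map}), and (c) argue that for a free action of a smooth unipotent group the Lie-algebra invariants are exactly the $p$-th powers of the coordinate ring of $M_\k$ tensored with the group invariants. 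Even then you must reconcile the two splittings $\widetilde{\a}_\k=\z_\chi\oplus\a_\k$ and $e+\widetilde{\m}_\k\cong M_\k\times\SS_\k$, which are not the same: the $\bar X_i$ are \emph{not} coordinates on $M_\k$, so it is not immediate that the resulting invariant ring is generated by the $\bar X_i^p$ rather than by more complicated $p$-th powers. None of this is in your sketch.

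The paper bypasses this entirely. Its key move (part~(b)) is to \emph{specialise} at each $\eta\in\chi+\m_\k^\perp$: writing an arbitrary $h\in\widehat{U}(\g_\k,e)$ as $h(1_\chi)=\sum f_{{\bf i},{\bf j}}\bar X^{\bf i}\bar\Theta^{\bf j}(1_\chi)$ with $f_{{\bf i},{\bf j}}\in Z_p(\widetilde{\a}_\k)$, one passes to $U_\eta(\g_\k,e)$, where Lemma~\ref{Q-eta}(iv) gives the monomial basis $\{\theta^{\bf j}\}$ and Lemma~\ref{lem4} says $\{\bar X^{\bf i}\otimes\bar 1_\chi\}$ is a free $U_\eta(\g_\k,e)$-basis of $Q_\chi^\eta$; this forces $f_{{\bf i},{\bf j}}=0$ for ${\bf i}\ne{\bf 0}$, proving freeness over $\rho_\k(Z_p)$ with basis $\{\bar\Theta^{\bf i}:{\bf i}\in{\bf I}_r\}$. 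Only \emph{after} this does the paper run a Kazhdan-degree induction (part~(c)) to trade each $\rho_\k(\bar x_i^p-\bar x_i^{[p]})$, $i\le r$, for $\bar\Theta_i^p$, giving $\widehat{U}(\g_\k,e)=U(\g_\k,e)\cdot Z_p(\a_\k)$. Your secondary worry---whether $\rho_\k(Z_p(\z_\chi))$ lands in $U(\g_\k,e)$---never arises in the paper's route, since (ii) is established directly and (iii) follows from it rather than conversely.
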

\begin{proof}
(a) First note that $Z_p(\g_\k)\cong Z_p(\m_\k)\otimes_\k
Z_p(\widetilde{\a}_\k)$ as algebras, and $Z(\m_\k)\cap\ker\,\rho_\k$
is an ideal of codimension $1$ in $Z_p(\m_\k)$. Hence
$\rho_\k(Z_p)=\rho_\k(Z_p(\widetilde{\a}_\k))$. As the monomials
$\bar{x}^{\bf i}\bar{z}^{\bf j}\otimes 1_\chi$ with $({\bf i},{\bf
j})\in\Z_+^m\times \Z_+^s$ form a basis of $Q_{\chi,\,\k}$ and
$Z_p(\widetilde{\a}_\k)$ is polynomial algebra in
$\bar{z}_i^p-\bar{z}_i^{[p]}$ $\,(1\le i\le s)$ and
$\bar{x}_j^p-\bar{x}_j^{[p]}$ $\,(1\le j\le m)$, we have that
$Z_p(\widetilde{\a}_\k)\cap\ker\,\rho_\k=\{0\}$. It follows that
$\rho_\k(Z_p)\cong Z_p(\widetilde{\a}_\k)\cong\k[\chi+\m_\k^\perp]$
as $\k$-algebras.

\smallskip

\noindent (b) Denote by ${\bf I}_l$ the set of all admissible tuples
in $\Z_+^{l}$ and let ${\bf e}_i$ denote the tuple in ${\bf I}_l$
whose only nonzero component equals $1$ and occupies the $i$-th
position. As an immediate consequence of (\ref{lam}), for $1\le k\le
r$ we have that
\begin{eqnarray}\label{Theta-p}
\bar{\Theta}_k^p(1_\chi)-\Big(\bar{x}_k^p+\sum_{|({\bf i},{\bf
j})|_e= n_k+2}\,\mu_{{\bf i},\,{\bf j}}^k\,\bar{x}^{p\,{\bf
i}}\bar{z}^{p\,{\bf j}}\Big)\otimes 1_\chi
\in\big(Q_{\chi,\,\k}\big)_{p(n_k+2)-1}
\end{eqnarray}
for some $\mu_{{\bf i},\,{\bf j}}^k\in{\mathbb F}_p$. Also,
$\gr(\bar{z}_i^p-\bar{z}_i^{[p]})=\gr(\bar{z}_i)^p$ and
$\gr(\bar{x}_i^p-\bar{x}_j^{[p]})=\gr(\bar{x}_j)^p$ for all $1\le
i\le s$ and $1\le j\le m$. On the other hand, [\cite{P07'},
Lemma~4.2(i)] implies that the vectors $\bar{X}^{\bf a}\otimes
1_\chi$ with ${\bf a}\in\Z_+^{d(e)}$ form a free basis of the right
$U(\g_\k,e)$-module $Q_{\chi,\,\k}$. As $Q_{\chi,\,\k}$ is a Kazhdan
filtered $U(\g_\k)$-module, straightforward induction on filtration
degree, based on (\ref{Theta-p}), shows that $Q_{\chi,\,\k}$ is
generated as a $Z_p(\widetilde{\a}_\k)$-module by the set
$\{\bar{X}^{\bf i}\bar{\Theta}^{\bf j}\otimes 1_\chi\,|\,\,{\bf
i}\in {\bf I}_{d(e)},\,{\bf j}\in{\bf I}_{r}\}$.

Let $h$ be an arbitrary element of $\widehat{U}(\g_\k,e)$. Then
$h(1_\chi)=\sum_{({\bf i}, {\bf j})\in\,{\bf I}_{d(e)}\times{\bf
I}_r}f_{{\bf i},{\bf j}}\bar{X}^{\bf i}\bar{\Theta}^{\bf j}(1_\chi)$
for some $f_{{\bf i},{\bf j}}\in Z_p(\widetilde{\a}_\k)$. For every
$\xi\in\chi+\m_\k^\perp$ the image of $f_{{\bf i}, {\bf j}}$ in
$U_\xi(\g_\k)$ is a scalar which shall be denoted by $\xi({\bf i},
{\bf j})$. Suppose $f_{{\bf a}, {\bf b}}\ne 0$ for a nonzero ${\bf
a}\in{\bf I}_{d(e)}$ and some ${\bf b}\in{\bf I}_r$. Then there
exists $\eta\in\chi+\m_\k^\perp$ such that $\eta({\bf a}, {\bf
b})\ne 0$. Let $h(\eta)$ be the image of $h$ in
$U_\eta(\g_\k,e)=\big(\End_{\g_\k} Q_{\chi}^\eta\big)^{\text{op}}$.
Lemma~\ref{Q-eta}(iv) implies that $h(\eta)(\bar{1}_\chi)$ is a
$\k$-linear combination of $\theta^{\bf i}(\bar{1}_\chi)$ with ${\bf
i}\in{\bf I}_r$. By Lemma~\ref{lem4}, the set $\{\bar{X}^{\bf
i}\otimes\bar{1}_\chi\,|\,\,{\bf i}\in{\bf I}_{d(e)}\}$ is a free
basis of the right $U_\eta(\g_\k,e)$-module $Q_\chi^\eta$. Since
$\eta({\bf a},{\bf b})\ne 0$ and $\theta^{\bf i}$ is the image of
$\bar{\Theta}^{\bf i}$ in $U_\eta(\g_\k,e)$, it is now evident that
$h(\eta)(\bar{1}_\chi)$ cannot be a $\k$-linear combination of
$\theta^{\bf i}(\bar{1}_\chi)$ with ${\bf i}\in{\bf I}_r$. This
contradiction shows that $f_{{\bf i},{\bf j}}=0$ unless ${\bf
i}={\bf 0}$. As a consequence, the set $\{\bar{\Theta}^{\bf
i}\,|\,\,{\bf i}\in{\bf I}_r\}$ generates $\widehat{U}(\g_\k,e)$ as
a $Z_p(\widetilde{\a}_\k)$-module. Specialising at a suitable
$\eta\in\chi+\m_\k^\perp$ and applying Lemma~\ref{Q-eta}(iv) one
more time we deduce that the set $\{\bar{\Theta}^{\bf i}\,|\,\,{\bf
i}\in{\bf I}_r\}$ is a free basis of the
$Z_p(\widetilde{\a}_\k)$-module $\widehat{U}(\g_\k,e)$.

\smallskip

\noindent (c) Our next goal is to show that
$\widehat{U}(\g_\k,e)=U(\g_\k,e)\cdot Z_p(\a_\k)$. Every
$\g_\k$-endomorphism of $Q_{\chi,\,\k}$ is uniquely determined by
its value at $1_\chi$. For  a nonzero $u\in \widehat{U}(\g_\k,e)$
write $u(1_\chi)\,=\,\sum_{|({\bf i},{\bf j})|_e\le n}\lambda_{{\bf
i},{\bf j}}\,\bar{x}^{\bf i}\bar{z}^{\bf j}\otimes 1_\chi$, where
$n=n(u)$ and $\lambda_{{\bf i},{\bf j}}\ne 0$ for at least one
$({\bf i},{\bf j})$ with $|({\bf i},{\bf j})|_e=n$. For $k\in \Z_+$
put $\Lambda^k(u):=\{({\bf i},{\bf j})\in \Z_+^m\times
\Z_+^s\,|\,\,\lambda_{{\bf i},{\bf j}}\ne 0\ \,\& \ \, |({\bf
i},{\bf j})|_e=k\}$ and denote by $\Lambda^{\rm max}(u)$ the set of
all $({\bf a},{\bf b})\in\Lambda^n(u)$ for which the quantity
$n-|{\bf a}|-|{\bf b}|$ assumes its maximum value. This maximum
value will be denoted by $n'=n'(u)$. For each $({\bf a},{\bf
b})\in\Lambda^{\rm max}$ we have that
$$|({\bf a},{\bf b})|_e-|{\bf a}|-|{\bf b}|\,=\,\textstyle{\sum}_{i=1}^m\,
(n_i+2)a_i+\textstyle{\sum}_{i=1}^s\,b_i-|{\bf a}|-|{\bf b}|\ge 0.$$
Consequently, $n(u),\,n'(u)\in \Z_+$ and $n(u)\ge n'(u)$.

Put $\Omega:=\{(a,b)\in\Z_+^2\,|\,\,a\ge b\}$. By the preceding
remark, $(n(u),n'(u))\in\Omega$ for all nonzero
$u\in\widehat{U}(\g_\k,e)$. It is immediate from (\ref{lam}) and our
discussion in part~(b) that $\Lambda^{\rm
max}(\bar{\Theta}_k)=\{({\bf e}_k,{\bf 0})\}$, $\,\Lambda^{\rm
max}\big(\rho_\k(\bar{x}^p_i-\bar{x}_i^{[p]})\big)=\{(p\,{\bf
e}_i,{\bf 0})\}$ for $1\le i\le m$, and $\Lambda^{\rm
max}\big(\rho_\k(\bar{z}^p_j-\bar{z}_j^{[p]})\big)=\{({\bf
0},p\,{\bf e}_j)\}$ for $1\le j\le s$. Since $Q_{\chi,\,\k}$ is a
Kazhdan filtered $U(\g_\k)$-module, this implies that
$$\Lambda^{\rm max}\Big(\textstyle{\prod}_{i=1}^m\,\rho_\k(\bar{x}_i^p-\bar{x}_i^{[p]})^{a_i}\cdot
\textstyle{\prod}_{i=1}^s\,\rho_\k(\bar{z}_i^p-\bar{z}_i^{[p]})^{b_i}\cdot\bar{\Theta}^{\bf
c}\Big) \,=\, \big\{\big(p\,{\bf a}+\textstyle{\sum}_{i=1}^r\,
c_i{\bf e}_i,p\,{\bf b}\big)\big\}$$ for all $({\bf a},{\bf
b})\in\Z_+^m\times\Z_+^s$ and all ${\bf c}\in{\bf I}_r$. Since
$\widehat{U}(\g_\k,e)$ is generated as a $Z_p(\a_\k)$-module by the
set $\{\bar{\Theta}^{\bf i}\,|\,\,{\bf i}\in{\bf I}_r\}$, it follows
that for every $u\in \widehat{U}(\g_\k,e)$ with $(n(u),n'(u))=(d,l)$
there exists a $\k$-linear combination $u'$ of the endomorphisms
$$u({\bf a},{\bf b},{\bf
c}):=\textstyle{\prod}_{i=1}^m\,\rho_\k(\bar{x}_i^p-\bar{x}_i^{[p]})^{a_i}\cdot
\textstyle{\prod}_{i=1}^s\,\rho_\k(\bar{z}_i^p-\bar{z}_i^{[p]})^{b_i}\cdot\bar{\Theta}^{\bf
c},\quad\ ({\bf a},{\bf b})\in\Z_+^m\times \Z_+^s, \ {\bf c}\in{\bf
I}_r,$$ with $\Lambda^{\rm max}\big(u({\bf a},{\bf b},{\bf
c})\big)\subseteq \Lambda^{\rm max}(u)$ such that either $n(u-u')<d$
or $n(u-u')=d$ and $n'(u-u')< l$.

Order the tuples in $\Omega$ lexicographically and assume that $u\in
U(\g_\k,e)\cdot Z_p(\a_\k)$ for all nonzero
$u\in\widehat{U}(\g_\k,e)$ with $(n(u),n'(u))\prec(d,l)$ (when
$(n(u),n'(u))=(0,0)$ this is a valid assumption). Now let $u\in
\widehat{U}(\g_\k,e)$ be such that $(n(u),n'(u))=(d,l)$. By the
preceding remark, there exists $u'=\sum_{({\bf a},{\bf b},{\bf
c})}\,\lambda_{{\bf a},{\bf b},{\bf c}}\,u({\bf a},{\bf b},{\bf c})$
with $\Lambda^{\rm max}\big(u({\bf a},{\bf b},{\bf c})\big)\subseteq
\Lambda^{\rm max}(u)$ for all $({\bf a},{\bf b},{\bf c})$ with
$\lambda_{{\bf a},{\bf b},{\bf c}}\ne 0$ such that
$(n(u-u'),n'(u-u'))\prec (d,l)$. Set
$$v({\bf a},{\bf b},{\bf
c})\,:=\,u((0,\ldots, 0,a_{r+1},\ldots,a_m),{\bf b},{\bf
0})\cdot\textstyle{\prod}_{i=1}^r\,\bar{\Theta}^{pa_i}\cdot\bar{\Theta}^{\bf
c}.$$ Using (\ref{Theta-p}) it is easy to observe that $\Lambda^{\rm
max}\big(u({\bf a},{\bf b},{\bf c})\big)=\Lambda^{\rm
max}\big(v({\bf a},{\bf b},{\bf c})\big)$ and $$\big(n(u({\bf
a},{\bf b},{\bf c})-v({\bf a},{\bf b},{\bf c})), n'(u({\bf a},{\bf
b},{\bf c})-v({\bf a},{\bf b},{\bf c}))\big)\prec \big(n(u({\bf
a},{\bf b},{\bf c})),n'(u({\bf a},{\bf b},{\bf c})\big).$$ We now
put $u'':=\sum_{({\bf a},{\bf b},{\bf c})}\,\lambda_{{\bf a},{\bf
b},{\bf c}}\,v({\bf a},{\bf b},{\bf c})$, an element of
$U(\g_\k,e)\cdot Z_p(\a_\k)$. Because $(n(u-u''),n'(u-u''))\prec
(n(u),n'(u))$, the equality $\widehat{U}(\g_\k,e)=U(\g_\k,e)\cdot
Z_p(\a_\k)$ follows by induction on the length of $(d,l)$ in the
linearly ordered set $(\Omega,\prec)$.

\smallskip

\noindent (d) It remains to show that $\widehat{U}(\g_\k,e)\cong
U(\g_\k,e)\otimes_\k Z_p(\a_\k)$. We have already mentioned that the
vectors $\bar{X}^{\bf a}\otimes 1_\chi$ with ${\bf a}\in\Z_+^{d(e)}$
form a free basis of the right $U(\g_\k,e)$-module $Q_{\chi,\,\k}$.
Since $\bar{X}_i^p$ and $\bar{X}_i^p-\bar{X}_i^{[p]}$ have the same
Kazhdan degree in $U(\g_\k)$ and $Q_{\chi,\,\k}$ is a Kazhdan
filtered $U(\g_\k)$-module, it follows that the vectors
$$\Big\{\textstyle{\prod}_{i=1}^{d(e)}\,\rho_\k(\bar{X}_i^p-\bar{X}_i^{[p]})^{a_i}
\cdot\bar{\Theta}^{\bf c}\otimes 1_\chi\,|\,\, a_i\in\Z_+,\,\,{\bf
c}\in \Z_+^{r}\Big\}$$ are linearly independent. This implies that
$\widehat{U}(\g_\k,e)\cong U(\g_\k,e)\otimes_\k Z_p(\a_\k)$ as
algebras, completing the proof.
\end{proof}
\subsection{}\label{4.5}
As an immediate consequence of Theorem~\ref{U-hat} we obtain:
\begin{corollary}\label{ab}
$\widehat{U}(\g_\k,e)^{\rm ab}\cong U(\g_\k,e)^{\rm ab}\otimes
Z_p(\a_\k)$ as $\k$-algebras.
\end{corollary}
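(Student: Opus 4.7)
The corollary is essentially a formal consequence of Theorem~\ref{U-hat}(iii), so there is no serious obstacle; the plan is simply to reduce abelianisation of a tensor product to the well-known fact that tensoring by a commutative algebra commutes with passing to the abelianisation. Let me spell out what I would write.

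First I would invoke Theorem~\ref{U-hat}(iii) to replace $\widehat{U}(\g_\k,e)$ by $U(\g_\k,e)\otimes_\k Z_p(\a_\k)$ and reduce the claim to the general statement: for any associative $\k$-algebra $\Lambda$ and any commutative $\k$-algebra $C$, one has a natural isomorphism
\begin{equation*}
(\Lambda\otimes_\k C)^{\rm ab}\,\cong\,\Lambda^{\rm ab}\otimes_\k C.
\end{equation*}
Here $C=Z_p(\a_\k)$ is indeed commutative, being a polynomial algebra in $\dim_\k\a_\k$ variables (it is a subalgebra of the $p$-centre).

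To establish the general statement, I would observe that any commutator in $\Lambda\otimes_\k C$ can be written as a $\k$-linear combination of elementary commutators of the form
\begin{equation*}
[a_1\otimes c_1,\,a_2\otimes c_2]\,=\,(a_1 a_2-a_2 a_1)\otimes c_1 c_2\,=\,[a_1,a_2]\otimes c_1 c_2,
\end{equation*}
where the middle equality uses commutativity of $C$. Consequently the two-sided ideal $(\Lambda\otimes_\k C)\cdot[\Lambda\otimes_\k C,\Lambda\otimes_\k C]$ coincides with $(\Lambda\cdot[\Lambda,\Lambda])\otimes_\k C$ as a subspace of $\Lambda\otimes_\k C$. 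Taking quotients and using right-exactness of $\otimes_\k C$ yields the desired identification.

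Combining the two steps gives
\begin{equation*}
\widehat{U}(\g_\k,e)^{\rm ab}\,\cong\,\bigl(U(\g_\k,e)\otimes_\k Z_p(\a_\k)\bigr)^{\rm ab}\,\cong\,U(\g_\k,e)^{\rm ab}\otimes_\k Z_p(\a_\k),
\end{equation*}
as required. The whole argument is a one-paragraph computation; the only place where any care is needed is in verifying that the ideal $(\Lambda\otimes_\k C)\cdot[\Lambda\otimes_\k C,\Lambda\otimes_\k C]$ really does equal $\Lambda\cdot[\Lambda,\Lambda]\otimes_\k C$ (both inclusions are straightforward from commutativity of $C$, but worth stating explicitly).
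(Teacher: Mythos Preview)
Your proposal is correct and follows essentially the same approach as the paper: both invoke Theorem~\ref{U-hat}(iii) and then prove the general fact $(\Lambda\otimes_\k C)^{\rm ab}\cong\Lambda^{\rm ab}\otimes_\k C$ for commutative $C$ by computing $[a_1\otimes c_1,a_2\otimes c_2]=[a_1,a_2]\otimes c_1c_2$ and identifying the commutator ideal as $\Lambda\cdot[\Lambda,\Lambda]\otimes_\k C$. The paper's version is a one-line display, but the content is identical.
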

\begin{proof}
If $C$ is an associative commutative $\k$-algebra, then for any
associative $\k$-algebra $\Lambda$ we have that $$[\Lambda\otimes_\k
C,\Lambda\otimes_\k C]\cdot(\Lambda\otimes_\k
C)\,=\,\big([\Lambda,\Lambda]\otimes_\k
C\big)\cdot(\Lambda\otimes_\k
C)\,=\,[\Lambda,\Lambda]\cdot\Lambda\otimes_\k C.$$ Hence
$(\Lambda\otimes_\k C)^{\rm ab}\,\cong\,\Lambda^{\rm ab}\otimes_\k
C$ as $\k$-algebras. In view of Theorem~\ref{U-hat} the corollary
obtains by setting $\Lambda:=U(\g_\k,e)$ and $C:=Z_p(\a_\k)$.
\end{proof}
We are now in a position to prove the main result of this section.
\begin{theorem}\label{U-U}
If the finite $W$-algebra $U(\g,e)$ affords a $1$-dimensional
representation, then for $p\gg 0$ the reduced enveloping algebra
$U_\chi(\g_\k)$ admits irreducible representations of dimension
$p^{d(e)}$.
\end{theorem}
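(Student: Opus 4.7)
The plan is as follows. By Lemma~\ref{Q-eta}(iii), $U_\chi(\g_\k)\cong\Mat_{p^{d(e)}}(U_\chi(\g_\k,e))$, so irreducible $U_\chi(\g_\k)$-modules of dimension $p^{d(e)}$ are in bijection with $1$-dimensional $U_\chi(\g_\k,e)$-modules. The task reduces to producing such a representation for $p\gg 0$.

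Using Theorem~\ref{U-hat}, I would identify $U_\chi(\g_\k,e)$ as a quotient of $\widehat{U}(\g_\k,e)\cong U(\g_\k,e)\otimes_\k Z_p(\a_\k)$. Since $\widetilde{\a}_\k=\a_\k\oplus\z_\chi\subset\sum_{i\ge-1}\g_\k(i)$ and $e\in\g_\k(2)$, one has $\chi|_{\widetilde{\a}_\k}=0$, and therefore $U_\chi(\g_\k,e)$ is obtained from $\widehat{U}(\g_\k,e)$ by killing the augmentation ideals of both $Z_p(\a_\k)$ and of the central subalgebra $Z_p(\z_\chi)\subset U(\g_\k,e)$ that sits inside via Theorem~\ref{U-hat}(iii). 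By Corollary~\ref{ab} combined with Lemma~\ref{L1}, $U_\chi(\g_\k,e)^{\mathrm{ab}}$ is then the quotient of $U(\g_\k,e)^{\mathrm{ab}}\cong\k[X_1,\ldots,X_r]/\langle {^p\!F_{ij}}\rangle$ by the $r$ additional relations $\bar{P}_k(X)=0$, where $\bar{P}_k$ denotes the image in $U(\g_\k,e)^{\mathrm{ab}}$ of $\rho_\k(\bar{x}_k^p-\bar{x}_k^{[p]})$. In particular, $1$-dimensional representations of $U_\chi(\g_\k,e)$ correspond to $\k$-points of $\mathcal E(\k)$ annihilating all $\bar{P}_k$.

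Starting from the assumed $\phi:U(\g,e)\to\CC$, view $(\phi(\Theta_k))_k\in\mathcal E(\CC)$ as a $\overline{\mathbb Q}$-point of the affine $A$-scheme $\mathcal E$. By a reduction-modulo-$\mathfrak P$ argument parallel to the one proving Corollary~\ref{1-dim rep}, for almost all $p\in\pi(A)$ this point reduces to a $\k$-point of $\mathcal E(\k)$, establishing that $U(\g_\k,e)$ admits a $1$-dimensional representation for $p\gg 0$. The remaining task is to choose such a point satisfying the extra relations $\bar{P}_k=0$. The critical input is the formula (\ref{Theta-p}) from the proof of Theorem~\ref{U-hat}: the leading Kazhdan-weight term of $\bar{P}_k$ is exactly $X_k^p$, while all lower-weight contributions involve $X_1,\ldots,X_r$ of strictly smaller Kazhdan weight. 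Consequently, the system $\bar{P}_k=0$ takes the Artin--Schreier shape $X_k^p=-Q_k(X_1,\ldots,X_r)$, which over the algebraically closed field $\k=\overline{\mathbb F}_p$ can be solved by iterated extraction of $p$-th roots, proceeding inductively on Kazhdan weight.

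The main obstacle is precisely this last step: producing a point in $\mathcal E(\k)$ that also satisfies the $r$ Artin--Schreier-type relations $\bar{P}_k=0$. The combined system ${^p\!F_{ij}}(X)=0$ and $\bar{P}_k(X)=0$ must be shown to have a common $\k$-solution for $p\gg 0$ whenever $\mathcal E(\CC)\ne\varnothing$; the argument relies on the Frobenius-triangular structure of the $\bar{P}_k$ and on the fact that the Kazhdan weights of $\bar{P}_k$ and of ${^p\!F_{ij}}$ are on very different scales (of order $p$ versus of order $1$), so that a $p$-th-root adjustment can be carried out without interfering with membership in $\mathcal E(\k)$ once $p$ is sufficiently large.
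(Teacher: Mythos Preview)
Your reduction to producing a $1$-dimensional representation of $U_\chi(\g_\k,e)$ via Lemma~\ref{Q-eta}(iii) is correct, and the argument that $U(\g_\k,e)$ admits a $1$-dimensional representation for $p\gg 0$ matches part~(a) of the paper's proof. The gap is in the last step, where you attempt to force $\eta=\chi$.

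Your proposed method---adjusting a point of $\mathcal E(\k)$ by ``iterated extraction of $p$-th roots'' so that it also satisfies the relations $\bar P_k=0$---does not work. The variety $\mathcal E(\k)$ may be zero-dimensional (this happens precisely when $e$ is rigid, by Proposition~\ref{Y-chi}(iii)), so there is no room to move within it at all. More generally, the defining relations ${^p\!F_{ij}}(X)=0$ constrain all the $X_k$ simultaneously, and there is no mechanism ensuring that the additional equations $X_k^p=-Q_k(X)$ are compatible with them; the observation that the two systems live ``on very different scales'' does not by itself produce a common solution. (A side issue: $Z_p(\z_\chi)$ does not sit inside the factor $U(\g_\k,e)$ of the tensor decomposition of Theorem~\ref{U-hat}(iii); under that isomorphism the elements $\rho_\k(\bar x_k^p-\bar x_k^{[p]})$ for $k\le r$ involve both tensor factors.)

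The paper sidesteps this entirely. Any $1$-dimensional representation of $\widehat U(\g_\k,e)\cong U(\g_\k,e)\otimes_\k Z_p(\a_\k)$ determines \emph{some} $\eta\in\chi+\m_\k^\perp$ for which $U_\eta(\g_\k,e)$ has a $1$-dimensional module, hence $U_\eta(\g_\k)$ has a simple module of dimension $p^{d(e)}$. The passage from this unspecified $\eta$ to $\chi$ is accomplished geometrically: the set $\Xi\subset\g_\k^*$ of linear forms $\xi$ for which $U_\xi(\g_\k)$ has a two-sided ideal of codimension $p^{2d(e)}$ is Zariski closed, $(\mathrm{Ad}^*\,G_\k)$-stable, and---via the Kac--Weisfeiler reduction and the conjugacy of nonzero scalar multiples of a nilpotent element---conical. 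Since $\eta\in\Xi$ and $\eta$ differs from $\chi$ only by terms of lower weight for the grading cocharacter $\lambda_e$, the contracting $\k^\times$-action $t\mapsto t^{-2}(\mathrm{Ad}^*\lambda_e(t))$ carries $\eta$ into $\chi$ in the limit, forcing $\chi\in\Xi$. A short bimodule argument then converts the two-sided ideal of codimension $p^{2d(e)}$ into a simple module of dimension $p^{d(e)}$.
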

\begin{proof}
(a) Suppose $U(\g,e)$ affords a $1$-dimensional representation. Then
${\mathcal E}(\mathbb C)\ne \varnothing$. Since the affine variety
${\mathcal E}({\mathbb C})={\rm Specm}\,U(\g,e)^{\rm ab}$ is defined
over $\mathbb Q$ and $\overline{\mathbb Q}$ is algebraically closed,
it follows that ${\mathcal E}(\overline{\mathbb Q})\ne \varnothing$.
Hence ${\mathcal E}(K)\ne \varnothing$ for some finite Galois
extension $K$ of $\mathbb Q$. It follows that there exists
$d\in\mathbb N$ such that $\mathcal E$ has a point with coordinates
in ${\mathcal O}_K[d^{-1}]$, where ${\mathcal O}_K$ stands for the
ring of algebraic integers of $K$. If $p\nmid d$, then there is
${\mathfrak P}\in{\rm Spec}\,{\mathcal O}_K[d^{-1}]$ such that
${\mathcal O}_K[d^{-1}]/{\mathfrak P}\cong {\mathbb F}_q$, where $q$
is a power of $p$. Embedding ${\mathbb F}_q$ into
$\k=\overline{\mathbb F}_p$ we see that ${\mathcal E}(\k)\ne
\varnothing$ for all such $p$. In view of Lemma~\ref{L1} this
implies that $U(\g_\k,e)$ affords $1$-dimensional representations
for all primes $p$ satisfying $p\nmid d$.

\smallskip

\noindent (b) Now suppose that $p\gg 0$ and $U(\g_\k,e)$ affords a
$1$-dimensional representation. Then Theorem~\ref{U-hat}(iv) yields
that the $\k$-algebra $\widehat{U}(\g_\k,e)$ affords a
$1$-dimensional representation too; we call it $\nu$.  By
Theorem~\ref{U-hat}(ii), $\rho_\k(Z_p)\cap \ker\,\nu$ is a maximal
ideal of the algebra $\rho_\k(Z_p)\cong
Z_p(\widetilde{\a}_\k)\cong\k[\chi+\m_\k^\perp]$. So there exists
$\eta\in\chi+\m_\k^\perp$ such that
$\rho_\k(x^p-x^{[p]}-\eta(x)^p)\in\ker\nu$ for all $x\in\g_\k$. Our
choice of $\eta$ ensures that the $\k$-algebra
$\widehat{U}_\eta(\g_\k,e):=\widehat{U}(\g_\k,e)\otimes_{Z_p(\widetilde{\a}_\k)}\k_\eta$
affords a $1$-dimensional representation.  On the other hand, the
canonical projection
$Q_{\chi,\,\k}\twoheadrightarrow\,Q_{\chi,\,\k}/I_\eta
Q_{\chi,\,\k}=Q_\chi^\eta$ gives rise to an algebra homomorphism
$\rho_\eta\colon\,\widehat{U}_\eta(\g_\k,e)\rightarrow\,\big(\End_{\g_\k}Q_\chi^\eta\big)^{\rm
op}=U_\eta(\g_\k,e)$. As $\dim_\k\widehat{U}_\eta(\g_\k,e)\le p^r$
by Theorem~\ref{U-hat}(ii), applying Lemma~\ref{Q-eta}(iv) yields
that $\rho_\eta$ is an algebra isomorphism. As
$U_\eta(\g_\k)\,\cong\, \Mat_{p^{d(e)}}\big(U_\eta(\g_\k,e)\big)$ by
Lemma~\ref{Q-eta}(iii), it follows that the algebra $U_\eta(\g_\k)$
has an irreducible representation of dimension $p^{d(e)}$.

\smallskip

\noindent (c) Let $\Xi$ denote the set of all $\xi\in\g_\k^*$ for
which the algebra $U_\xi(\g_\k)$ contains a two-sided ideal of
codimension $p^{2d(e)}$. It is immediate from [\cite{PS}, Lemma~2.3]
that the set $\Xi$ is Zariski closed in $\g_\k^*$. If $\xi'=({\rm
Ad}^*\,g)(\xi)$ for some $g\in G_\k$, then $U_\xi(\g_\k)\cong
U_{\xi'}(\g_\k)$ as algebras. Hence $\Xi$ is stable under the
coadjoint action of $G_\k$.

We claim that $\k^\times\cdot \xi\subset \Xi$ for all $\xi\in\Xi$.
To prove the claim we first recall that $\xi=(x,\,\cdot\,)$ for some
$x\in\g_\k$. Let $x=x_s+x_n$ be the Jordan--Chevalley decomposition
of $x$ in the restricted Lie algebra $\g_\k$ and put
$\xi_s:=(x_s,\,\cdot\,)$, $\xi_n:=(x_n,\,,\cdot\,)$, and ${\mathfrak
l}:=\z(\chi_s)$. As $p\gg 0$ and $x_s$ is semisimple, $\l$ is a Levi
subalgebra of $\g_\k$. It $t\in\k^\times$, then $tx=tx_s+tx_n$ is
the Jordan--Chevalley decomposition of $tx$. Obviously,
$\z(t\xi_s)=\mathfrak l$.

Put $d:=\frac{1}{2}(\dim_\k\g_\k-\dim_\k\mathfrak l)$. It follows
from the Kac--Weisfeiler theorem (or rather from its generalisation
due to Friedlander--Parshall) that
$U_\xi(\g_\k)\,\cong\,\Mat_{p^d}\big(U_\xi({\mathfrak l})\big)$ and
$U_{t\xi}(\g_\k)\,\cong\,\Mat_{p^d}\big(U_{t\xi}({\mathfrak
l})\big)$; see [\cite{P02}, 2.5], for example. Since $p\gg 0$, we
have a direct sum decomposition ${\mathfrak l}={\mathfrak
s}\oplus\z({\mathfrak l})$, where ${\mathfrak s}=[{\mathfrak
l},{\mathfrak l}]$, and induced tensor product decompositions
$U_\xi({\mathfrak l})\,\cong\,U_{\xi}({\mathfrak s})\otimes_\k
U_{\xi}(\z({\mathfrak l}))$ and $U_{t\xi}({\mathfrak
l})\,\cong\,U_{t\xi}({\mathfrak s})\otimes_\k U_{t\xi}(\z({\mathfrak
l}))$. As $\z({\mathfrak l})$ is a toral subalgebra of $\g_\k$, the
reduced enveloping algebra $U_\psi(\z({\mathfrak l}))$ is
commutative and semisimple for every $\psi\in \z({\mathfrak l})^*$.
From this it is immediate that $U_\xi(\z({\mathfrak
l}))\,\cong\,U_{t\xi}(\z({\mathfrak l}))$ as algebras.

Let $L$ be the Levi subgroup of $G_\k$ with $\Lie(L)=\mathfrak l$.
It acts on ${\mathfrak s}$ as restricted Lie algebra automorphisms.
Note that $\xi_{\vert{\mathfrak s}}=\xi_n$. As $x_n$ is nilpotent
and $L$ is reductive, all nonzero scalar multiples of $x_n$ are
conjugate under the adjoint action of $L$. This implies that the
algebras $U_{\xi}({\mathfrak s})$ and $U_{t\xi}({\mathfrak s})$ are
isomorphic. In view of our earlier remarks this shows that
$U_{t\xi}({\mathfrak l})\,\cong\, U_\xi({\mathfrak l})$ and
$U_{t\xi}(\g_\k)\,\cong\,U_\xi(\g_\k)$ for all $t\in\k^\times$. Our
claim is an immediate consequence of the last isomorphism.

\smallskip

\noindent (d) Since $\Xi$ is Zariski closed and $\k^\times\cdot
\xi\subset \Xi$ for all $\xi\in\Xi$, the set $\Xi$ is conical. As
$U_\eta(\g_\k)$ has a simple module of dimension $p^{d(e)}$, we have
$\eta\in\Xi$. As $\eta\in\chi+\m_\k^\perp$ we can write
$\eta=(e+y,\,\cdot\,)$ for some $y=\sum_{i\le -1}\,y_i$ with
$y_i\in\g_\k(i)$. There is a cocharacter
$\lambda\colon\,\k^\times\rightarrow\,G_\k$ such that
$(\Ad\,\lambda(t))\,x=t^{j}x$ for all $x\in\g_\k(j)$, $j\in\Z$ and
$t\in\k^\times$. For $i\le -1$, set $\eta_i:=(y_i,\,\cdot\,)$. Then
$\eta=\chi+\sum_{i\le -1}\,\eta_i$ and $({\rm
Ad}^*\,\lambda(t))\,\eta\,=\,t^2\chi+\sum_{i\le 1}\,t^i\eta_i$. As
$\Xi$ is conical and $({\rm Ad}^*\, G_\k)$-invariant, this implies
that $\chi+\sum_{i\le 1}\,t^{2-i}\, \eta_i\in\Xi$ for all
$t\in\k^\times$. Since $\Xi$ is Zariski closed, this yields
$\chi\in\Xi$.

Let $I$ be a two-sided ideal of codimension $p^{2d(e)}$ in
$U_\chi(\g_\k)$. Then $U_\chi(\g_\k)/I$ is a
$U_\chi(\g_\k)$-bimodule. Since $U_\chi(\g_\k)\otimes_\k
U_\chi(\g_\k)^{\rm op}\,\cong\, U_{(\chi,-\chi)}(\g_\k\oplus\g_\k)$
as $\k$-algebras, it is immediate from [\cite{P95}, Thm.~3.10] that
the bimodule $U_\chi(\g_\k)/I$ is irreducible. But then
$U_\chi(\g_\k)/I\,\cong\,\Mat_{p^{d(e)}}(\k)$. This shows that
$U_\chi(\g_\k)$ has a simple module of dimension $p^{d(e)}$,
completing the proof.
\end{proof}
\subsection{}
We call a representation of $U_\xi(\g_\k)$ {\it small} if it has
dimension equal to $p^{(\dim\,G_\k\cdot\,\xi)/2}$. To prove that
every reduced enveloping algebra $U_\xi(\g_\k)$ has such a
representation is a well-known open problem in the modular
representation theory of Lie algebras; see [\cite{P95}, p.~114],
[\cite{Ka}], [\cite{H}, p.~110], for example. This problem has a
positive solution for Lie algebras type $\rm A$ due to the fact that
all nilpotent elements in $\mathfrak{gl}_n$ are Richardson. This
enables one to construct small representations by inducing up
$1$-dimensional representations of appropriate parabolic
subalgebras. However, outside type $\rm A$ the problem of small
representations is wide open, and in the most interesting cases it
is impossible to obtain such representations by parabolic induction.
Our next result solves the problem of small representations for Lie
algebras of types $\rm B$, $\rm C$, $\rm D$ under the assumption
that $p\gg 0$.
\begin{corollary}\label{ABCD}
If $\g_\k$ is of type $\rm B$, $\rm C$ or $\rm D$, then the problem
of small representations for $\g_\k$ has a positive solution for
almost all primes. More precisely, if $\k=\overline{\mathbb F}_p$
and $p\gg 0$, then for every $\xi\in\g_\k^*$ the reduced enveloping
algebra $U_\xi(\g_\k)$ has a simple module of dimension
$p^{(\dim\,G_\k\cdot\,\xi)/2}$.
\end{corollary}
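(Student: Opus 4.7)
The corollary is essentially Corollary \ref{D'} restricted to types $\rm B$, $\rm C$, $\rm D$, packaged to make explicit its impact on the small-representations problem; the point is that type $\rm A$ was already known via classical parabolic induction, so the genuinely new cases are exactly $\rm B$, $\rm C$, $\rm D$. The plan is therefore to derive it as an immediate consequence of Corollary \ref{D'}. Since types $\rm B$, $\rm C$, $\rm D$ are classical, and by definition $\O(\xi) = (\Ad^*\,G_\k)\xi$ has dimension $\dim G_\k\cdot\xi$, Corollary \ref{D'} furnishes for $p\gg 0$ a simple $U_\xi(\g_\k)$-module of the required dimension $p^{(\dim G_\k\cdot\xi)/2}$.

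Tracing the logic more concretely: the nilpotent case $\xi=\chi=(e,\,\cdot\,)$ is settled by combining Losev's theorem [\cite{Lo}, Thm.~1.2.3(1)] --- which guarantees that $U(\g,e)$ admits a $1$-dimensional representation whenever $\g$ is classical --- with Theorem~\ref{U-U}, which manufactures from that datum a simple $U_\chi(\g_\k)$-module of dimension $p^{d(e)}$ for $p\gg 0$. For a general $\xi\in\g_\k^*$ one invokes the Kac--Weisfeiler--Friedlander--Parshall Morita equivalence, exactly as in step~(c) of the proof of Theorem~\ref{U-U}: identifying $\xi$ with $(x,\,\cdot\,)$ via the form, taking the Jordan--Chevalley decomposition $x = x_s + x_n$ in the restricted Lie algebra $\g_\k$, and putting $\l := \z_{\g_\k}(x_s)$ (a Levi subalgebra for $p\gg 0$), one obtains
$$U_\xi(\g_\k)\,\cong\,\Mat_{p^{(\dim\g_\k - \dim\l)/2}}\big(U_{\xi|_\l}(\l)\big).$$
The splitting $\l = [\l,\l]\oplus\z(\l)$ combined with the toral (hence semisimple) nature of $\z(\l)$ reduces the problem to producing a small simple module for $U_{(x_n,\,\cdot\,)}([\l,\l])$, where $x_n\in[\l,\l]$ is nilpotent.

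Since every Levi subalgebra of a classical Lie algebra is itself a product of classical simple factors (type $\rm A$ pieces together with at most one factor of the same type as $\g_\k$), Losev's theorem applies to $[\l,\l]$, and Theorem~\ref{U-U} yields the required small simple module on each factor. The dimension count matches via $\dim G_\k\cdot\xi = (\dim\g_\k - \dim\l) + \dim L\cdot x_n$, which is the standard additivity coming from the Morita reduction. No serious obstacle remains once Corollary~\ref{D'} is in hand; the only point that must be checked carefully is the bookkeeping identifying $p^{d(x_n)}$ on the Levi with the exponent $p^{(\dim G_\k\cdot\xi)/2}$ on the full group, and this is a direct verification.
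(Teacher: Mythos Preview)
Your argument is correct and matches the paper's: reduce via Kac--Weisfeiler to the Levi, whose semisimple part is a product of classical simples, then apply Losev's theorem and Theorem~\ref{U-U} to each factor. One cosmetic point: citing Corollary~\ref{D'} up front is circular, since Corollary~\ref{ABCD} \emph{is} its detailed proof in the body of the paper --- but your subsequent tracing of the logic is self-contained and is essentially what the paper does (the paper additionally invokes [\cite{P95}, Thm.~3.10] at the end to certify irreducibility of the constructed module, which you obtain for free from the Morita equivalence).
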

\begin{proof}
If $\mathfrak l$ Levi subalgebra of $\g_\k$, then ${\mathfrak
l}=[\l,\l]\oplus\mathfrak{z(l)}$ and $[{\mathfrak l},{\mathfrak l}]$
decomposes as a direct sum of ideals each of which is a simple Lie
algebra of type ${\rm A}$, ${\rm B}$, ${\rm C}$, ${\rm D}$ (one
should keep in mind here that $p\gg 0$). In view of the
Kac--Weisfeiler theorem this reduces the problem of small
representations to the case where $\xi=(\bar{n},\,\cdot\,)$ for some
nilpotent element $\bar{n}\in\g_\k$; see [\cite{P02}, 2.5] or
[\cite{H}, p.~114]. Furthermore, it can be assumed that
$\bar{n}=n\otimes 1$ for some nilpotent element $n\in\g$. By
[\cite{Lo}, Thm.~1.2.3(1)], the finite $W$-algebra $U(\g,n)$ admits
a $1$-dimensional representation (the argument in [\cite{Lo}] relies
on earlier results of McGovern on completely prime primitive ideals;
see [\cite{Mc}, Ch.~5]). Applying Theorem~\ref{U-U} we now see that
the reduced enveloping algebra $U_\xi(\g_\k)$ has a module of
dimension $p^{(\dim\,G_\k\cdot\,\xi)/2}$. This module is irreducible
thanks to [\cite{P95}, Thm.~3.10].
\end{proof}
\begin{rem} Applying successively [\cite{P07'}, 4.3],
Corollary~\ref{1-dim rep}, [\cite{P07}, Thm.~3.1(ii)], and
[\cite{Lo}, Prop.~3.4.6] one observes that if the problem of small
representations for $\g_\k$ has a positive solution for almost all
primes, then for every nilpotent orbit ${\mathcal O}\subset\g$ there
exists a completely prime primitive ideal $I$ of $U(\g)$ such that
$\mathcal{VA}(I)=\overline{\mathcal O}$ (here $\mathcal{VA}(I)$
stands for the associated variety of $I$).
\end{rem}
\begin{rem}
It seems likely that Corollary~\ref{ABCD} remains true for all
$p>2$. To relax the assumption on $p$ in the statement of
Corollary~\ref{ABCD} by the methods of this paper one would need a
more explicit presentation of $U(\g,e)$ in the spirit of
[\cite{BrK}].
\end{rem}
\section{\bf Sheets and commutative quotients of finite $W$-algebras}
\subsection{}\label{5.1}
Our main goal in this section is to estimate the number of
irreducible components of the affine variety ${\rm Specm}\,
U(\g,e)^{\rm ab}$ and determine their dimensions. Since our
arguments will rely on Corollary~\ref{ABCD}, we have to leave aside
some nilpotent orbits in Lie algebras of type ${\rm E}_7$ and ${\rm
E}_8$.

Because the field $\overline{\mathbb Q}$ is algebraically closed,
all irreducible components of ${\mathcal E}({\mathbb C})={\rm
Specm}\,U(\g,e)^{\rm ab}$ are defined over an algebraic number
field, $K$ say. Let $R$ denote the ring of algebraic integers of
$K$. For any maximal ideal $\mathfrak p$ of $R$ the residue field
$R/\mathfrak p$ is finite. Denote by $\k(\p)$ the algebraic closure
of $R/\p$ and let $\varphi\colon\,R[X_1,\ldots, X_r]\rightarrow\,
(R/\p)[X_1,\ldots, X_r]$ be the homomorphism of polynomial algebras
induced by inclusion $R/\p\hookrightarrow \k(\p)$. Given a Zariski
closed set $V\subseteq {\mathbb A}^r_{\mathbb C}$ with defining
ideal $J\subset K[X_1,\ldots, X_r]$ we let $\p(V)$ stand for the
zero locus of $\varphi(J\cap R[X_1,\ldots, X_r])$ in ${\mathbb
A}^r_{\k(\p)}$.

Given an algebraic variety $Y$ we let ${\rm Comp}(Y)$ denote the set
of all irreducible components of $Y$. If is a regular function $f$
on $Y$, we write $V(f)$ for the zero locus of $f$ in $Y$.

\begin{lemma}\label{base change}
For any $p\gg 0$ there exists a bijection
$\sigma\colon\,\mathrm{Comp}({\mathcal E}({\mathbb
C}))\stackrel{\sim}{\rightarrow}\mathrm{Comp}({\mathcal E}(\k))$
such that $\dim_{\mathbb C}{\mathcal Y}=\dim_\k\sigma(\mathcal Y)$
for all ${\mathcal Y}\in \mathrm{Comp}({\mathcal E}({\mathbb C}))$.
\end{lemma}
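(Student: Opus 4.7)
The plan is a standard spreading-out argument. The variety $\mathcal{E}$ is cut out in $\mathbb{A}^r$ by the polynomials $F_{ij}$ of (\ref{relations}), all of which have coefficients in the admissible ring $A$. I would first enlarge $A$ to a Dedekind-type ring that sees the arithmetic of every irreducible component at once. Namely, since $\mathcal{E}(\mathbb C)$ is defined over $A$, each irreducible component $\mathcal{Y}_i$ of $\mathcal{E}(\mathbb C)$ is defined over some finite Galois extension $K$ of $\mathrm{Frac}(A)$; enlarging $K$ further, one may assume each $\mathcal{Y}_i$ is \emph{geometrically} irreducible over $K$. Let $R$ be the integral closure of $A$ in $K$, with finitely many primes inverted, so that $R$ is a finitely generated admissible $A$-algebra and each defining prime $J_i\subset K[X_1,\ldots,X_r]$ of $\mathcal{Y}_i$ extends from $J_i^R:=J_i\cap R[X_1,\ldots,X_r]$.

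The next step is to check that the decomposition $\mathcal{E}(\mathbb C)=\mathcal{Y}_1\cup\cdots\cup\mathcal{Y}_s$ specialises correctly. Writing $I:=(F_{ij})$, one has $\sqrt{I\otimes_A K}=J_1\cap\cdots\cap J_s$, whence $(J_1\cap\cdots\cap J_s)^N\subset I\otimes_A K$ for some $N\ge 1$. After clearing denominators and inverting finitely many further elements of $R$, the same containment holds integrally. Reducing modulo a maximal ideal $\mathfrak p\subset R$ of residue characteristic $p\gg 0$ (large enough to avoid those finitely many bad primes), and combining with the trivial inclusion $\bar I\subset\overline{J_i^R}$, one deduces
$$\mathcal{E}(\k)\,=\,V(\overline{J_1^R})\cup\cdots\cup V(\overline{J_s^R}).$$
Now I would apply the generic-fibre theorem (EGA IV, \S 6.9, or Matsumura's \emph{Commutative Ring Theory}, Thm.~14.8) to each family $\mathrm{Spec}\,R[X_1,\ldots,X_r]/J_i^R\to\mathrm{Spec}\,R$: for almost all $\mathfrak p$, the fibre $V(\overline{J_i^R})$ is geometrically irreducible of the same Krull dimension as $\mathcal{Y}_i$. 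Applying the analogous statement to the schemes $V(J_i^R+J_j^R)$ shows that for $i\ne j$ the specialisation of the intersection has strictly smaller dimension than either $V(\overline{J_i^R})$ or $V(\overline{J_j^R})$. Hence the $V(\overline{J_i^R})$ are pairwise distinct irreducible components of $\mathcal{E}(\k)$, and setting $\sigma(\mathcal{Y}_i):=V(\overline{J_i^R})$ produces the required bijection, preserving dimensions.

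The main obstacle is the geometric-irreducibility step: over $K$ a component can be irreducible without being absolutely irreducible, and in that case its reduction modulo $\mathfrak p$ may well split into several components over $\k$, destroying the bijection. This is precisely the reason for enlarging $K$ at the outset so that every $\mathcal{Y}_i$ is already geometrically irreducible; once this arithmetic preparation is done, the remaining ingredients are routine applications of generic flatness and Chevalley's theorem on constructibility of fibre dimension.
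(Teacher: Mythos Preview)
Your argument is correct and follows essentially the same spreading-out strategy as the paper's proof: pass to a number field $K$ over which the components of $\mathcal{E}(\mathbb{C})$ are defined, take an order $R$ in $K$, and invoke standard results guaranteeing that for almost all primes the reductions of the components remain irreducible, retain their dimensions, stay pairwise distinct, and exhaust $\mathcal{E}(\k)$. The only differences are cosmetic: the paper cites the classical sources of Noether, Shimura--Taniyama, and Shimura for these reduction facts and handles the identity $\mathfrak{p}(\mathcal{E}(\mathbb{C}))=\mathcal{E}(\k)$ via compatibility of reduction with finite intersections, whereas you quote EGA~IV/Matsumura and use the nilradical inclusion $(J_1\cap\cdots\cap J_s)^N\subset I$ together with a dimension count on pairwise intersections.
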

\begin{proof} Let ${\mathcal Y}_1,\ldots, {\mathcal Y}_q$ be the irreducible components of ${\mathcal
E}({\mathbb C})$. Since the ${\mathcal Y}_i$'s are defined over $K$,
it follows from [\cite{Noe}, Satz~XVII],  [\cite{ShT}, Ch.~III,
Prop.~17] and [\cite{Sh}, Prop.~18 and Thm.~28] that for almost all
$\p\in{\rm Spec}\,R$ the affine varieties $\p({\mathcal
Y}_1),\ldots, \p({\mathcal Y}_q)$ are irreducible and nonempty, that
$\dim_{\mathbb C} {\mathcal Y}_i=\dim_\k \p({\mathcal Y}_i)$ for all
$i$, and that $\p({\mathcal E}({\mathbb C}))\,=\,\p({\mathcal
Y}_1)\cup\ldots\cup\p({\mathcal Y}_q)$.

Note that $A\subseteq S^{-1}R$ and ${\mathcal E}({\mathbb
C})\,=\,\bigcap_{\,i,j}V(F_{ij})$. Passing to a finite extension of
$K$ if necessary, we may assume that all hypersurfaces $V(F_{ij})$
are defined over $K$ and the sets ${\mathcal
Y}_1(K),\ldots,{\mathcal Y}_q(K)$ are pairwise distinct. By
[\cite{ShT}, Ch.~III, Prop.~19], if Zariski closed sets $V_1$ and
$V_2$ are defined over $K$, then $\p(V_1\cap
V_2)\,=\,\p(V_1)\cap\p(V_2)$ for almost all $\p$. This shows that
$\p({\mathcal E}({\mathbb C}))\,=\,\bigcap_{\,i,j}\,\p(V(F_{ij}))$
for almost all $\p\in{\rm Spec}\,R$. If $p={\rm char}\,\k(\p)$, then
$\k(\p)=\k$ and $\p(V(F_{ij}))=V(^p\!F_{ij})$ for all $i,j$. As a
consequence,
$$\p({\mathcal E}({\mathbb
C}))\,=\,\textstyle{\bigcap}_{\,i,j}\,\p(V(F_{ij}))\,=\,
\textstyle{\bigcap}_{\,i,j}V(^p\!F_{ij})\,=\,{\mathcal E}(\k)$$ for
almost all $\p\in{\rm Spec}\,R$ (see [\cite{G}, pp.~28, 30] for a
similar reasoning). Since the morphism ${\rm Spec}\,R\rightarrow
{\rm Spec}\,\mathbb Z$ induced by inclusion ${\mathbb Z}\subset R$
is surjective, we obtain that ${\rm Comp}({\mathcal
E}(\k))=\{\p({\mathcal Y}_1),\ldots, \p({\mathcal Y}_q)\}$ for all
but finitely many $p\in\pi(A)$. As $\p({\mathcal
Y}_1),\ldots,\p({\mathcal Y}_q)$ are pairwise distinct for almost
all $\p$ and $\dim_{\mathbb C} {\mathcal Y}_i=\dim_\k\p({\mathcal
Y}_i)$ for all $i$, the statement follows.
\end{proof}
\subsection{}\label{5.2} In what follows we are going to use
the Lusztig--Spaltenstein theory of induced nilpotent orbits and the
Borho--Kraft theory of sheets in $\g_\k$; see [\cite{LS}] and
[\cite{BK1}]. Our main reference here is [\cite{Borh}]. Although the
base field in [\cite{Borh}] is assumed to have characteristic $0$,
the results in {\it loc.\,cit.} that we actually need are valid over
$\k$ under the assumption that ${\rm char}\,\k$ is a good prime for
the root system of $G_\k$; see [\cite{LS}], [\cite{Borh}, p.~289],
[\cite{Sp}, p.~33] and [\cite{Mc1}] for related discussions.

At some point, we are going to invoke Katsylo's results
[\cite{Kats}] on sections of sheets. The original argument in
[\cite{Kats}] involved Hausdorff neighbourhoods and holomorphic
maps, but a purely algebraic proof was recently found by Im Hof; see
[\cite{Im}, pp.~8--14]. Since all results of [\cite{Borh}] used in
[\cite{Im}, pp.~8--14] apply in good characteristic, one can see by
inspection that Im Hof's arguments are valid in positive
characteristic provided that $(\g_\k)_f\cap[e,\g_\k]=0$. The latter
holds for all $p\gg 0$.

From now on we assume that $p\gg 0$. Let $F$ be either $\mathbb C$
or $\k$ and put $\g_F:=\g_\Z\otimes_\Z F$. Then $\g_F=\Lie(G_F)$ and
$(\g_F,G_F)$ is either $(\g,G)$ or $(\g_\k,G_\k)$ (depending ${\rm
char}\,F$). Let ${\mathfrak l}_F=\Lie\,L_F$ be a proper Levi
subalgebra of $G_F$ and let ${\mathcal O}_0$ be a nilpotent orbit in
${\mathfrak l}_F$. Let $\g_F=\u_{-,\,F}\oplus{\mathfrak
l}_F\oplus\u_{+,\,F}$ be a triangular decomposition of $\g_F$ with
${\mathfrak l}_F\oplus\u_{-,\,F}$ and ${\mathfrak
l}_F\oplus\u_{+,\,F}$ being conjugate parabolic subalgebras of
$\g_F$. Since the number of nilpotent orbits in $\g_F$ is finite,
there is a unique nilpotent orbit ${\mathcal O}\subset\g_F$ which
intersects densely  with the irreducible Zariski closed set
$\overline{\mathcal O}_0+\u_{+,\,F}$. We say that the orbit
${\mathcal O}$ is {\it induced} from ${\mathcal O}_0$, written
${\mathcal O}={\rm Ind}_{\mathfrak l_F}^{\g_F}\,{\mathcal O}_0$. It
is known that $\mathcal O$ is independent of the choice of a
triangular decomposition of $\g_F$ involving ${\mathfrak l}_F$,
which justifies the notation; see [\cite{LS}, [\cite{Borh}],
[\cite{Sp}]. If $e_0\in{\mathcal O}_0$ and $e\in {\rm
Ind}_{{\mathfrak l}_F}^{\g_F}\,{\mathcal O}_0$, then $e$ is said to
be {\it induced} from $e_0$. If a nilpotent orbit $\O\subset\g_F$ is
{\it not} induced from a nilpotent orbit in a proper Levi subalgebra
of $\g_F$, then $\O$ is said to be {\it rigid} and every $x\in\O$ is
called a {\it rigid} nilpotent element of $\g_F$.
\begin{theorem}\label{rigid} Let ${\mathcal O}_0$ be a
nilpotent orbit in a proper Levi subalgebra $\mathfrak l$ of $\g$,
and ${\mathcal O}={\rm Ind}_{{\mathfrak l}}^{\g}\,{\mathcal O}_0$.
Let $e_0\in{\mathcal O}_0$ and $e\in {\mathcal O}$. If the finite
$W$-algebra $U([{\mathfrak l},{\mathfrak l}],e_0)$ affords a
$1$-dimensional representation, then so does the finite $W$-algebra
$U(\g,e)$.
\end{theorem}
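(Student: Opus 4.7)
The plan is to translate the hypothesis into a modular existence statement via Theorem~\ref{U-U} applied to $[\l,\l]$, transport it across the Kac--Weisfeiler/Friedlander--Parshall Morita equivalence by choosing a suitable $p$-character on the Slodowy slice using Katsylo's section theorem, invert Lemma~\ref{Q-eta}(iii) to read off a $1$-dimensional representation of $U(\g_\k,e)$, and then descend to $\mathbb C$ through Corollary~\ref{1-dim rep}. The first step is immediate: applying Theorem~\ref{U-U} summand-by-summand to the semisimple Lie algebra $[\l,\l]$ (whose splitting into simple ideals is legitimate for $p\gg 0$), the hypothesis produces, for every sufficiently large $p\in\pi(A)$, a simple $U_{\chi_0}([\l,\l]_\k)$-module of dimension $p^{d_L(e_0)}$, where $\chi_0=(e_0,\,\cdot\,)|_{[\l,\l]_\k}$ and $d_L(e_0)=\tfrac12\dim(\Ad L)e_0$.

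To transport this to $\g_\k$, I use Katsylo's theorem, valid for $p\gg 0$ by Im Hof's algebraic proof recalled in \S\ref{5.2}: the Slodowy slice $e+\ker\ad f$ meets the open decomposition class $\mathcal D(\l,e_0)$ of the sheet through $e$. Choose a point $y$ in this intersection; by definition of $\mathcal D(\l,e_0)$ one can write $y=(\Ad g)(e_0'+z)$ with $e_0'\in(\Ad L)e_0$, $z\in\z(\l)_{\rm reg}$ and $g\in G_\k$, so that the Jordan--Chevalley decomposition of $y$ in $\g_\k$ is $y_s=(\Ad g)z$ semisimple with centraliser a Levi conjugate to $\l_\k$, plus $y_n=(\Ad g)e_0'$ nilpotent. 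Since $\ker\ad f\subset\bigoplus_{i\le 0}\g_\k(i)$ is contained in the form-orthogonal complement of $\m_\k$, the functional $\eta:=(y,\,\cdot\,)$ lies in $\chi+\m_\k^\perp$. The Kac--Weisfeiler--Friedlander--Parshall theorem gives $U_\eta(\g_\k)\cong\Mat_{p^{\dim\u_+}}\!\big(U_{\eta'}(\l_\k)\big)$ after conjugating $\eta$ by $g^{-1}$ to $\eta'=(e_0'+z,\,\cdot\,)\in\l_\k^*$. Because $\l_\k=[\l,\l]_\k\oplus\z(\l)_\k$ is an orthogonal direct sum of restricted ideals and $e_0'\in[\l,\l]_\k$, $z\in\z(\l)_\k$, this factors further as $U_{\eta'}(\l_\k)\cong U_{\chi_0'}([\l,\l]_\k)\otimes_\k U_{(z,\,\cdot\,)}(\z(\l)_\k)$, with $\chi_0'$ an $L_\k$-conjugate of $\chi_0$ and the second factor a commutative semisimple algebra carrying $1$-dimensional simple modules. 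Tensoring the dimension-$p^{d_L(e_0)}$ module from the first factor with such a character and lifting via Morita yields a simple $U_\eta(\g_\k)$-module of dimension $p^{\dim\u_++d_L(e_0)}=p^{d(e)}$, the last equality being the Lusztig--Spaltenstein formula $\dim\O=\dim\O_0+2\dim\u_+$.

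Lemma~\ref{Q-eta}(iii) now furnishes $U_\eta(\g_\k)\cong\Mat_{p^{d(e)}}\!\big(U_\eta(\g_\k,e)\big)$, so the simple module just constructed corresponds to a $1$-dimensional representation of $U_\eta(\g_\k,e)$. The algebra isomorphism $\widehat{U}_\eta(\g_\k,e)\iso U_\eta(\g_\k,e)$ from the proof of Theorem~\ref{U-U}(b), combined with the tensor decomposition $\widehat{U}(\g_\k,e)\cong U(\g_\k,e)\otimes_\k Z_p(\a_\k)$ of Theorem~\ref{U-hat}(iii), then produces a $1$-dimensional representation of $U(\g_\k,e)$ by specialising the $Z_p(\a_\k)$-factor at the character determined by $\eta$. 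Since this construction runs for all sufficiently large $p\in\pi(A)$, Corollary~\ref{1-dim rep} yields the desired $1$-dimensional representation of $U(\g,e)$. The principal obstacle is the middle step: one must confirm that Katsylo's section theorem survives the passage to characteristic $p$ (granted by Im Hof's argument for $p\gg 0$), that the chosen slice point $y$ really produces an $\eta\in\chi+\m_\k^\perp$, and that its Jordan--Chevalley structure modulo $p$ retains the ``regular semisimple in $\z(\l)_\k$ plus nilpotent in $[\l,\l]_\k$'' shape on which the Kac--Weisfeiler Morita decomposition depends.
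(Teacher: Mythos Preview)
Your argument is correct but takes a different route from the paper's. Both proofs share the same bookends—reduce to $[\l,\l]$ via Theorem~\ref{U-U} at the start, ascend to $\mathbb C$ via Corollary~\ref{1-dim rep} at the end—but the middle differs genuinely. The paper works directly at the nilpotent $p$-character $\chi=(\bar e,\cdot)$: it chooses $e=e_0+e_1$ with $e_1\in\u_A$ so that $e$ lies in the induced orbit, observes that $\chi$ vanishes on $\u_\k$ and restricts to $\xi_0=(\bar e_0,\cdot)$ on $\l_\k$, and then \emph{parabolically induces} the small $U_{\xi_0}(\l_\k)$-module through $U_\chi(\p_\k)$ (with $\u_\k$ acting trivially) to obtain a $U_\chi(\g_\k)$-module of dimension $p^{d(e)}$. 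This avoids Katsylo entirely and is more elementary. Your approach instead moves to a non-nilpotent $\eta$ on the Slodowy slice with regular semisimple part and invokes the Kac--Weisfeiler--Friedlander--Parshall Morita decomposition; this is precisely the mechanism the paper deploys later, in Proposition~\ref{behave}, where it is genuinely needed to show that the \emph{whole} Katsylo section lies in $Y_\chi$ rather than just produce a single point.

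One small inaccuracy worth noting: you call $\mathcal D(\l,e_0)$ ``the open decomposition class of the sheet through $e$'', but this is only true when $e_0$ is rigid in $[\l,\l]$. For general $e_0$ the class $\mathcal D(\l,e_0)$ is merely \emph{some} decomposition class in a sheet $\mathcal S$ containing $e$ (that $e\in\mathcal S$ follows since $\mathcal O={\rm Ind}_\l^{\g_\k}\mathcal O_0$ lies in $\overline{\mathcal D(\l,e_0)}\cap\g_\k^{(r)}$). Your slice-meets-class claim still survives, however: Katsylo gives $(\Ad G_\k)\cdot X=\mathcal S$ for $X=\mathcal S\cap(e+\ker\ad f)$, and since decomposition classes are $(\Ad G_\k)$-stable, any point of $\mathcal D(\l,e_0)\subset\mathcal S$ is conjugate to one in $X$.
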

\begin{proof}
(a) By the Bala--Carter theory, we may assume that ${\mathfrak
l}=\Lie(L)$ is a standard Levi subalgebra of $\g$ and
$e_0\in{\mathfrak l}_\Z$, where ${\mathfrak l}_\Z={\mathfrak
l}\cap\g_\Z$. Let $\p_\Z=\l_\Z\oplus\u_{\Z}$ be a standard parabolic
$\Z$-subalgebra of $\g_\Z$ with nilradical $\u_\Z$. By our earlier
discussion, we may also assume that $\mathcal O$ intersects densely
with $\overline{\mathcal O}_0+\u$, where $\u:=\u_\Z\otimes_\Z\mathbb
C$. Set $\bar{e}_0:=e_0\otimes 1$, an element of ${\mathfrak
l}_\k=\l_\Z\otimes_\Z\k$. As explained in [\cite{P07'}, 2.5], we may
choose $e_0$ such that $\dim_{\mathbb C}\,{\mathcal O}_0
=\dim_\k\,{\mathcal O}_{\k,\,0}$, where ${\mathcal O}_{\k,\,0}:=(\Ad
L_\k)\cdot\bar{e}_0$.

Since ${\rm Ind}_{\l}^{\g}\,{\mathcal O}_{0}$ contains a nonempty
Zariski open subset of $\overline{\mathcal O}_{0}+\u$ and the set
$\big((\Ad L({\mathbb Q})\big)\cdot e_0+\u_\mathbb Q$ is dense in
${\mathcal O}_0+\u$, there is $e_1\in\u_\mathbb Q$ with
$e:=e_0+e_1\in{\rm Ind}_{\l}^{\g}\,{\mathcal O}_{0}$. Enlarging $A$
if necessary, we may assume that $e_1\in\u_A$. For $p\in\pi(A)$ set
$\bar{e}:=\bar{e}_0+\bar{e}_1$, an element of
$\g_\k=\g_A\otimes_A\k$. It follows from [\cite{LS}, Thm.~1.3] that
$\dim\g_e=\dim \l_{e_0}$ and $\g_e\subset \p$, where
$\p=\p_\Z\otimes_\Z\mathbb C$. Therefore, $\dim\, [\p,e]=\dim
\,[\l,e_0]+\dim\, \u$, forcing $[\p_{\mathbb Q},e]=[\l_{\mathbb
Q},e_0]+\u_\mathbb Q$. Extending $A$ further, we may assume that
$[\l_A,e_0]$ is a direct summand of $\l_A$ and
$[\p_A,e]=[\l_A,e_0]+\u_A$. Then
$[\p_\k,\bar{e}]=[\l_\k,\bar{e}_0]+\u_\k$ for all $p\in\pi(A)$,
implying that $(\Ad P_\k)\cdot \bar{e}$ is dense in
$\overline{\mathcal O}_{0\,\k}+\u_\k$ (here $P_\k$ is the parabolic
subgroup of $G_\k$ with $\Lie(P_\k)=\p_\k$). This shows that
$\bar{e}\in{\rm Ind}_{\l_\k}^{\g_\k}\,{\mathcal O}_{0,\,\k}$ for all
$p\gg 0$. Extending $A$ even further we include $e$ into an
$\sl_2$-triple $\{e,h,f\}\subset\g_A$ and then consider the finite
$W$-algebra $U(\g_A,e)$ as in (\ref{4.2'}).

\smallskip

\noindent (b) Put $\xi_0:=(\bar{e}_0,\,\cdot\,)$ and
$\xi:=(\bar{e},\,\cdot\,)$, linear functions on $\l_\k$ and $\g_\k$,
respectively. Note that $\xi$ vanishes on $\u_\k$ and the
restriction of $\xi$ to $\l_\k$ equals $\xi_0$. As $[\l_\k,\l_\k]$
is a direct sum of simple ideals and $U_{\xi_0}(\l_\k)\cong
U_{\xi_0}([\l_\k,\l_\k])\otimes_\k U_{\xi_0}(\z(\l_\k))$, it is
immediate from Theorem~\ref{U-U} that for all $p\gg 0$ the reduced
enveloping algebra $U_{\xi_0}(\l_\k)$ has a simple module of
dimension $p^{d(e_0)}$, where $d(e_0)=(\dim {\mathcal O}_{0})/2$.
Given such a module $V$ we regard it as a $U_\xi(\p_\k)$-module with
the trivial action of $\u_\k$ and consider the induced
$U_\xi(\g_\k)$-module $\widetilde{V}:=
U_\xi(\g_\k)\otimes_{U_\xi(\p_\k)}V$. It follows from the PBW
theorem that
$$\dim \widetilde{V}=p^{\dim \g_\k-\dim \p_\k}\cdot
p^{d(e_0)}=p^{( \dim \g-\dim \l+\dim {\mathcal O}_{0})/2}=p^{d(e)}.
$$ Since $\dim_\k\, ({\rm Ad}^* G_\k)\cdot\xi=2d(e)$ by our choice of
$e$, Lemma~\ref{Q-eta}(iii) entails that the algebra
$U_\xi(\g_\k,\bar{e})$ affords a $1$-dimensional representation.
Then so does the algebra $U(\g_\k,e)$ thanks to
Lemmas~\ref{Q-eta}(iv) and \ref{L1}. Since this holds for all $p\gg
0$, Corollary~\ref{1-dim rep} yields that the finite $W$-algebra
$U(\g,e)$ affords a $1$-dimensional representation too. This
completes the proof.
\end{proof}
\begin{corollary}\label{c-p}
Let ${\mathcal O}_0$ and ${\mathcal O}$ be as in
Theorem~\ref{rigid}. If the finite $W$-algebra $U([\l,\l],e_0)$
affords a $1$-dimensional representation, then the enveloping
algebra $U(\g)$ has a completely prime primitive ideal $I$ with
$\mathcal{VA}(I)=\overline{\mathcal O}$.
\end{corollary}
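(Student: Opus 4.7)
The plan is to combine Theorem~\ref{rigid} with Skryabin's equivalence and existing results linking finite-dimensional representations of $U(\g,e)$ to primitive ideals of $U(\g)$; essentially all the hard work is already done in Theorem~\ref{rigid}.

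First I would apply Theorem~\ref{rigid}: by hypothesis $U([\l,\l],e_0)$ admits a $1$-dimensional representation, and since $\O={\rm Ind}_\l^\g\,\O_0$ and $e\in\O$, the finite $W$-algebra $U(\g,e)$ also admits a $1$-dimensional representation $\eta\colon U(\g,e)\twoheadrightarrow\CC$. Let $\CC_\eta$ denote the corresponding simple $U(\g,e)$-module.

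Next I would invoke Skryabin's equivalence (as in [\cite{P02}, Appendix]): the functor $N\mapsto Q_\chi\otimes_{U(\g,e)}N$ is an exact equivalence between the category of left $U(\g,e)$-modules and the category of Whittaker generalised $\g$-modules for $(\m,\chi)$. Under this equivalence $\CC_\eta$ corresponds to a simple $\g$-module $\widetilde{V}:=Q_\chi\otimes_{U(\g,e)}\CC_\eta$, and its annihilator $I:=\mathrm{Ann}_{U(\g)}\widetilde{V}$ is therefore a primitive ideal of $U(\g)$. The identification $\mathcal{VA}(I)=\overline{\O}$ is then an application of [\cite{P07}, Thm.~3.1(ii)], which computes the associated variety of $\mathrm{Ann}_{U(\g)}\widetilde V$ for any Whittaker $\g$-module of this form as the closure of the orbit of $e$. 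Complete primeness of $I$ follows from [\cite{Lo}, Prop.~3.4.6], which guarantees that an annihilator arising in this way from a one-dimensional (and hence completely prime) representation of $U(\g,e)$ is itself completely prime.

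The only nontrivial ingredient in this assembly is the identification $\mathcal{VA}(I)=\overline{\O}$: a priori one only knows from the Kazhdan filtration that $\mathcal{VA}(I)\supseteq\overline{\O}$, and the reverse inclusion is the substantive content of [\cite{P07}, Thm.~3.1(ii)]. However, since this is already established, the proof of Corollary~\ref{c-p} reduces to citing Theorem~\ref{rigid} together with [\cite{P02}], [\cite{P07}, Thm.~3.1(ii)], and [\cite{Lo}, Prop.~3.4.6] in sequence, and requires no new calculation.
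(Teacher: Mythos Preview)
Your proposal is correct and follows essentially the same route as the paper's proof: apply Theorem~\ref{rigid} to obtain a $1$-dimensional $U(\g,e)$-module, use Skryabin's equivalence to get a primitive ideal $I={\rm Ann}_{U(\g)}\big(Q_\chi\otimes_{U(\g,e)}\CC_\eta\big)$, invoke [\cite{P07}, Thm.~3.1] for $\mathcal{VA}(I)=\overline{\mathcal O}$, and cite [\cite{Lo}, Prop.~3.4.6] for complete primeness. The paper's argument is the same sequence of citations, without your additional commentary on the Kazhdan filtration.
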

\begin{proof}
Let $\chi=(e,\,\cdot\,)$, a linear function on $\g$. By
Theorem~\ref{rigid}, the finite $W$-algebra $U(\g,e)$ has a
$1$-dimensional module, ${\mathbb C}_0$ say. By Skryabin's
equivalence, the annihilator $I:={\rm
Ann}_{U(\g)}\big(Q_\chi\otimes_{U(\g,\,e)}{\mathbb C}_0\big)$ is a
primitive ideal of $U(\g)$; see [\cite{Sk}]. By [\cite{P07},
Thm.~3.1], the associated variety of $I$ equals $\overline{\mathcal
O}$. By [\cite{Lo}, Prop.~3.4.6], the primitive quotient $U(\g)/I$
is a domain, that is $I$ is completely prime.
\end{proof}
\begin{rem}
Corollary~\ref{c-p} reduces to rigid orbits the well-known open
problem of assigning to any nilpotent orbit ${\mathcal O}$ in $\g$ a
completely prime ideal primitive ideal $I$ of $U(\g)$ with
$\mathcal{VA}(I)=\overline{\mathcal O}$. Closely related results
were recently obtained by Borho--Joseph through a careful study of
the behaviour of Goldie rank under parabolic induction; see
[\cite{BJ}, 4.8 and 7.4]. Our arguments are completely different
(and more elementary). We recall from the proof of
Corollary~\ref{ABCD} that if all components of the semisimple Lie
algebra $[\l,\l]$  are of type ${\rm A,\,B,\,C,\,D}$, then
$U([\l,\l],e_0)$ affords $1$-dimensional representations (this
follows from [\cite{Mc}, Ch.~5] and [\cite{Lo}, Thm.~1.2.3(1)]).
\end{rem}
\subsection{}\label{5.3} The group $G_\k$ contains
a unique connected unipotent group $M_\k$ of dimension $d(e)$ with
the property that $\exp\ad x\in \Ad M_\k$ for all  $x\in\m_\k$
(since $p\gg 0$ exponentiating nilpotent derivations of $\g_\k$ does
not cause us any problems). Note that $\Lie M_\k=\m_\k$. The group
$M_\k$ is a characteristic $p$ analogue of the unipotent group $M$
from [\cite{Gi}] which, in turn, is a special instance of a group
$N_l$ for $l=\g(-1)^0$ (the group $N_l$ can be defined for any
totally isotropic subspace $l\subset\g(-1)$; see [\cite{GG}]).

In what follows we need a characteristic $p$ version of [\cite{GG},
Lemma~2.1]. Let $\kappa\colon\,\g\stackrel{\sim}{\rightarrow}\g^*$
be the Killing isomorphism given by $x\mapsto\,(x,\,\cdot\,)$, so
that $\chi=\kappa(e)$, and write $\SS_\k$ for the Slodowy slice
$\chi+\kappa(\ker \ad f)$ to the coadjoint orbit $({\rm
Ad}^*\,G_\k)\cdot \chi$. Since $\chi$ vanishes on $[\m_\k\,\m_\k]$,
the group  ${\rm Ad}^*\,M_\k$ preserves the affine subspace
$\chi+\m_\k^\perp\subset\g_\k^*$. Set
$\g_\k(1)^0:=\,\{x\in\g_\k(1)\,\vert\,\,(x,\g_\k(-1)^{0})=0\}$, an
$s$-dimensional subspace of $\g(1)$. Then
$$\kappa^{-1}(\m_\k^\perp)\,=\,\g_\k(1)^0\,\textstyle{\oplus\,\bigoplus}_{i\le
0}\,\g_\k(i).$$ Let $\lambda_e\in X_*(G_\k)$ be the cocharacter such
that $(\Ad \lambda_e(x))\cdot x=t^ix$ for all $x\in\g_\k(i)$ and
$i\in\Z$ and define a rational action
$\rho_e\colon\,\k^\times\rightarrow {\rm GL}(\g_\k)$ by setting
$\rho_e(t)(x):=t^2(\Ad \lambda_e)(t^{-1})(x)$ for all $x\in\g_\k$.
\begin{lemma}{\rm (cf. [\cite{GG}, Lemma~2.1])}\label{action-map}
The coadjoint action-map $\alpha\colon\,M_\k\times
\SS_\k\rightarrow\, \chi+\m_\k^\perp$ is an isomorphism of affine
varieties.
\end{lemma}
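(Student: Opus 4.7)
The plan is to adapt the proof of Gan--Ginzburg's Lemma 2.1 to positive characteristic, exploiting the fact that for $p\gg 0$ all the ingredients (exponentials of nilpotent derivations, contracting $\mathbb{G}_m$-actions with positive weights, $\mathfrak{sl}_2$-theory) behave as in characteristic zero.

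First I would verify that both sides are affine varieties of the same dimension. Since $\dim \m_\k = d(e)$ and $\ker \ad f$ is a complement to $[e,\g_\k]$ in $\g_\k$ of dimension $\dim\g_\k - 2d(e)$, one obtains
\begin{equation*}
\dim\,M_\k\times\SS_\k \,=\, d(e)+\big(\dim\g_\k-2d(e)\big)\,=\,\dim\g_\k-d(e)\,=\,\dim(\chi+\m_\k^\perp).
\end{equation*}
Next I would introduce the Kazhdan $\k^\times$-action: view $\rho_e$ as acting on $\g_\k^*$ by $(\rho_e^*(t)\eta)(x):=t^{-2}\eta(\rho_e(t)x)$, so that $\chi$ is fixed and $\rho_e^*$ stabilises $\chi+\m_\k^\perp$ with strictly positive weights on $\m_\k^\perp$ (this uses that all eigenvalues of $\ad h$ on $\m_\k$ are $\le -1$, combined with the shift by $2$). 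The action $\rho_e^*$ also preserves $\SS_\k$ with strictly positive weights centred at $\chi$, and conjugation by $\lambda_e(t)$ contracts $M_\k$ to the identity as $t\to 0$. Checking equivariance of $\alpha$ under these actions is a direct computation using $(\Ad\,\lambda_e(t))\,\m_\k\subset\m_\k$ and the definition of the coadjoint action.

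The core step is to compute the differential $d\alpha$ at the base point $(1,\chi)$. The tangent space there is $\m_\k\oplus\kappa(\ker\ad f)$, and using $\chi([x,\cdot\,])=-\kappa([e,x],\cdot\,)$ the differential sends $(x,\kappa(v))$ to $\kappa(v-[e,x])$. Since $\g_\k=[e,\m_\k]\oplus\ker\ad f$ (this is a consequence of $\g_\k=[e,\g_\k]\oplus\ker\ad f$ together with $[e,\g_\k(i)]=\g_\k(i+2)$ for $i\ge 0$ and the choice of $\m_\k$; see the setup in Section 2.1), the map $(x,v)\mapsto v-[e,x]$ is a linear bijection, so $d\alpha_{(1,\chi)}$ is an isomorphism. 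Combined with contraction, this implies that $d\alpha$ is an isomorphism at every point of the fixed-point locus, and hence on all of $M_\k\times\SS_\k$ by $\k^\times$-equivariance: given any point, its $\k^\times$-orbit closure meets the fixed locus, and if $d\alpha$ fails to be an isomorphism on a closed set, that set would have to meet the fixed point, contradicting the previous calculation.

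Finally, with $\alpha$ a $\k^\times$-equivariant morphism of irreducible affine varieties of the same dimension, everywhere \'etale, mapping the unique fixed point to the unique fixed point, I would invoke a standard contracting-$\k^\times$-action argument (Bia\l ynicki--Birula style) to upgrade \'etale$+$contracting to an isomorphism: the induced map of coordinate rings is a graded map between polynomial rings whose associated graded is bijective on generators, hence an isomorphism. The main obstacle is justifying these contracting-action arguments in characteristic $p$, but thanks to the assumption $p\gg 0$ the weights involved are bounded by constants depending only on $\g$, so no Frobenius-twist phenomena intervene and the arguments of Im Hof and of [\cite{GG}] go through verbatim.
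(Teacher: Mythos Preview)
Your approach is the same as the paper's: translate via $\kappa$ to the adjoint side, compute the differential at the fixed point, and use the contracting $\k^\times$-action to upgrade a local isomorphism to a global one. The paper carries this out by citing Slodowy's Lemma~8.1.1 directly, which only requires the differential at the unique fixed point to be an isomorphism; your detour through ``\'etale everywhere, then Bia\l ynicki--Birula'' is not wrong but is more than is needed.

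There is one genuine slip in your differential computation. You assert that $\g_\k=[e,\m_\k]\oplus\ker\ad f$, but this is dimensionally impossible: $\dim[e,\m_\k]+\dim\ker\ad f\le d(e)+r=\dim\g_\k-d(e)<\dim\g_\k$. What you actually need is that the map $(x,v)\mapsto v-[e,x]$ from $\m_\k\oplus\ker\ad f$ to $\widetilde{\m}_\k:=\kappa^{-1}(\m_\k^\perp)$ is a bijection. The paper argues this as follows: $\ad e$ is injective on $\m_\k$ (because $\g_e\subset\bigoplus_{i\ge 0}\g_\k(i)$ while $\m_\k\subset\bigoplus_{i\le -1}\g_\k(i)$, so $\m_\k\cap\g_e=0$), and $(\ker\ad f)\cap(\mathrm{Im}\,\ad e)=0$ under our assumption on $p$. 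These two facts give injectivity, and equality of dimensions then gives bijectivity. Once you replace your false decomposition by this argument, your proof coincides with the paper's.
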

\begin{proof}
As $M_\k$ is a connected unipotent group, we have that $M_\k\cong
{\mathbb A}_\k^{d(e)}$ as affine varieties. Set
$\widetilde{\m}_\k:=\kappa^{-1}(\m_\k^\perp)$. In order to prove the
lemma we need to show that the adjoint action-map $\alpha\colon\,
M_\k\times(e+\ker\ad f)\map e+\widetilde{\m}_\k$ is an isomorphism.
It is easy to see that both varieties have the same dimension.

The differential ${\rm d}_{(1,\,e)}\alpha\colon\,\m_\k\oplus\ker\ad
f\map\,\widetilde{\m}_\k$ is given by $x+z\mapsto [x,e]+z$ for all
$x\in\m_\k$ and $z\in\ker\ad f$. Since $\ad e$ is injective on
$\m_\k$ and $(\ker\ad f)\cap (\mathrm{Im}\,\ad e)=0$ under our
assumptions on $p$, the map ${\rm d}_{(1,\,e)}\alpha$ is a linear
isomorphism. As in [\cite{GG}], we define a $\k^\times$-action on
the affine variety $M_\k\times (e+\widetilde{\m}_\k)$ by
$$t\cdot
(g,x):=\,(\lambda_e(t)^{-1}g\lambda_e(t),\rho_e(t)(x))\qquad\quad\
(t\in\k^\times,\,g\in M_\k,\,x\in\widetilde{\m}_\k).$$ As in
[\cite{GG}, p.~246], we see that this $\k^\times$-action is
contracting and the Zariski closure of the set
$\{t\cdot(g,x)\,|\,\,t\in\k^\times\}$ contains $(1, e)$. Since the
morphism $\alpha$ is $\k^\times$-equivariant, we can apply
[\cite{Sl}, Lemma~8.1.1] to complete the proof.
\end{proof}
\begin{rem}
Instead of applying [\cite{Sl}, Lemma~8.1.1] we could finish the
proof of Lemma~\ref{action-map} by a more geometric argument
outlined in [\cite{Gi0}, p.~553]. This argument is purely algebraic
and works in all characteristics.
\end{rem}
\subsection{}\label{5.4}
Let $\widehat{\mathcal E}$ denote the maximal spectrum of
$\widehat{U}(\g_\k,e)^{\rm ab}$.  Composing the embedding
$Z_\p(\widetilde{\a}_\k)\hookrightarrow \widehat{U}(\g_\k,e)$ with
the canonical homomorphism $\widehat{U}(\g_\k,e)\twoheadrightarrow
\widehat{U}(\g_\k,e)^{\rm ab}$ we get a map
$\k[\chi+\m_\k^\perp]\rightarrow \widehat{U}(\g_\k,e)^{\rm ab}$
which, in turn, gives rise to an algebra homomorphism
$$\beta^*\colon\,\k[\chi+\m_\k^\perp]\map\,\widehat{U}(\g_\k,e)^{\rm ab}/{\rm
nil}\,\widehat{U}(\g_\k,e)^{\rm ab}\,=\,\k[\widehat{\mathcal E}]$$
(as in (\ref{4.4}), we identify $Z_p(\widetilde{\a}_\k)$ with the
coordinate algebra $\k[\chi+\m_\chi^\perp]$). Let $J_\chi=\ker
\beta^*$ and denote by $Y_\chi$ the zero locus of $J_\chi$ in
$\chi+\m_\chi^\perp$. As $\widehat{U}(\g_\k,e)$ is a finite
$Z_p(\widetilde{\a}_\k)$-module by Theorem~\ref{U-hat}(ii),
$\k[\widehat{\mathcal E}]=\widehat{U}(\g_\k,e)^{\rm ab}/{\rm
nil}\,\widehat{U}(\g_\k,e)^{\rm ab}$ is a finite module over
$\k[Y_\chi]$. So $\beta^*$ induces a finite (hence surjective)
morphism of affine varieties $$\beta\colon\,\widehat{\mathcal E}\map
Y_\chi.$$

The group $M_\k$ preserves the left ideal $U(\g_\k)N_{\chi,\,\k}$
and therefore acts on
$\widehat{U}(\g_\k,e)=\big(\End_{\g_\k}U(\g_\k)/U(\g_\k)
N_{\chi,\,\k}\big)^{\rm op}$ as algebra automorphisms. Hence $M_\k$
acts on $\widehat{U}(\g_\k,e)^{\rm ab}$. As $M_\k$ preserves
$\rho_\k(Z_p)\cong \k[\chi+\m_\chi]$, the map $\beta^*$ is a
homomorphism of $M_\k$-modules. Thus, both
 $\widehat{\mathcal E}$ and $Y_\chi$ are
$M_\k$-varieties and the morphism $\beta$ is $M_\k$-equivariant.
Thanks to Lemma~\ref{action-map}, the action-map $M_\k\times
\SS_\k\rightarrow \chi+\m_\k^\perp$ induces an isomorphism
\begin{equation}\label{542} Y_\chi\,\cong\, M_\k\times(\SS_\k \cap Y_\chi).
\end{equation}
\begin{prop}\label{E-hat} The following statements hold:
\begin{itemize}
\item[(1)] $\widehat{\mathcal E}\,\cong\,M_\k\times{\mathcal E}(\k)$ as affine varieties.
\smallskip
\item[(2)] The map $\beta$ induces a finite morphism
$\bar{\beta}\colon\,{\mathcal E}(\k)\twoheadrightarrow\,\SS_\k\cap
Y_\chi.$
\end{itemize}
\end{prop}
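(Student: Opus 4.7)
For part (1), Corollary~\ref{ab} gives $\widehat{U}(\g_\k,e)^{\rm ab}\cong U(\g_\k,e)^{\rm ab}\otimes_\k Z_p(\a_\k)$. Since $\a_\k$ has dimension $d(e)$ and $v\mapsto v^p-v^{[p]}$ injects $\a_\k$ into $Z_p(\a_\k)$, the algebra $Z_p(\a_\k)$ is polynomial in $d(e)$ generators, hence reduced. Passing to the reduced quotient therefore commutes with this tensor product, giving $\widehat{\mathcal E}\cong\mathcal E(\k)\times\mathrm{Specm}\,Z_p(\a_\k)\cong\mathcal E(\k)\times\mathbb A_\k^{d(e)}\cong\mathcal E(\k)\times M_\k$, the last isomorphism using that $M_\k$ is a connected unipotent algebraic group of dimension $d(e)$ and hence affine space as a variety.

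For part (2), I would exploit parallel product decompositions on source and target. Writing $\widetilde\a_\k=\z_\chi\oplus\a_\k$ yields $\chi+\m_\k^\perp\cong\mathrm{Specm}\,Z_p(\widetilde\a_\k)\cong\mathrm{Specm}\,Z_p(\z_\chi)\times\mathrm{Specm}\,Z_p(\a_\k)$. Under Theorem~\ref{U-hat}(iii), $\rho_\k(Z_p(\a_\k))$ corresponds to the $1\otimes Z_p(\a_\k)$-factor in $U(\g_\k,e)\otimes_\k Z_p(\a_\k)$, and this persists after abelianisation and nil-quotient; consequently the composition $\mathrm{pr}_2\circ\beta\colon\widehat{\mathcal E}\to\mathrm{Specm}\,Z_p(\a_\k)$ equals the second projection from $\mathcal E(\k)\times\mathrm{Specm}\,Z_p(\a_\k)$ of part (1).

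Next I would identify $\SS_\k$ inside $\chi+\m_\k^\perp$. Using $\g_\k=\widetilde\a_\k\oplus\m_\k$ to identify $\m_\k^\perp\cong\widetilde\a_\k^*$ by restriction, then combining $(\ref{orthog})$ with the orthogonality $(\ker\ad\,f,\m_\k)=0$ (a direct grading check) and a dimension count, $\kappa(\ker\ad\,f)$ corresponds to $\{\zeta\in\widetilde\a_\k^*:\zeta|_{\a_\k}=0\}\cong\z_\chi^*$. So $\SS_\k$ is identified with $\mathrm{Specm}\,Z_p(\z_\chi)\times\{0\}$, whence $\beta^{-1}(\SS_\k\cap Y_\chi)=\mathcal E(\k)\times\{0\}\cong\mathcal E(\k)$ and one sets $\bar\beta:=\beta|_{\mathcal E(\k)\times\{0\}}$. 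Surjectivity onto $\SS_\k\cap Y_\chi$ is inherited from $\beta$, and finiteness follows from Theorem~\ref{U-hat}(ii) by tensoring the finite $Z_p(\widetilde\a_\k)$-module $\widehat{U}(\g_\k,e)^{\rm ab}/\mathrm{nil}$ with $\k$ over $Z_p(\a_\k)$, which gives $U(\g_\k,e)^{\rm ab}/\mathrm{nil}$ finite over $Z_p(\z_\chi)$.

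The main obstacle I foresee is the identification $\kappa(\ker\ad\,f)\cong\z_\chi^*$ inside $\widetilde\a_\k^*$, which rests on $(\ref{orthog})$, the orthogonality $(\ker\ad\,f,\m_\k)=0$, and a dimension count; once that is in place, the surjection $\bar\beta$ and its finiteness drop out of Corollary~\ref{ab} and Theorem~\ref{U-hat}.
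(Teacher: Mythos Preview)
Your argument is correct, but it follows a genuinely different route from the paper's.

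The paper argues \emph{geometrically} via the $M_\k$-action. For part~(1) it first uses the $M_\k$-equivariance of $\beta$ together with the slice isomorphism of Lemma~\ref{action-map} to obtain $\widehat{\mathcal E}\cong M_\k\times\widehat{\mathcal E}_0$ with $\widehat{\mathcal E}_0:=\beta^{-1}(\SS_\k\cap Y_\chi)$; it then invokes Corollary~\ref{ab} to get a second product decomposition $\widehat{\mathcal E}\cong\mathcal E(\k)\times\mathbb A_\k^{d(e)}$, checks that the zero locus $\mathcal V$ of the $\a_\k$-ideal sits inside $\widehat{\mathcal E}_0$, and compares components and dimensions of the two decompositions to force $\mathcal V=\widehat{\mathcal E}_0$. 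Part~(2) is then extracted by quotienting by the augmentation ideal of the Hopf algebra $\k[M_\k]$.

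You instead work \emph{algebraically} throughout: part~(1) is immediate from Corollary~\ref{ab} and the reducedness of $Z_p(\a_\k)$ (over the perfect field $\k$, the reduced quotient of a tensor product with a reduced factor is the tensor product of the reduced quotients). For part~(2) you bypass the $M_\k$-action entirely by identifying $\SS_\k$ inside $\chi+\m_\k^\perp$ as the locus where the $Z_p(\a_\k)$-coordinates vanish, using $(\ref{orthog})$, the orthogonality $(\ker\ad f,\m_\k)=0$, and the dimension count $\dim(\m_\k\oplus\a_\k)^\perp=\dim\g_\k-2d(e)=r=\dim\ker\ad f$; the compatibility of the $Z_p(\a_\k)$-factor through $\beta^*$ then gives $\beta^{-1}(\SS_\k\cap Y_\chi)=\mathcal E(\k)\times\{0\}$ directly, and finiteness follows by base change from Theorem~\ref{U-hat}(ii).

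What each buys: the paper's route makes the $M_\k$-equivariant structure of $\widehat{\mathcal E}$ explicit and ties the proposition to Lemma~\ref{action-map}, which is conceptually satisfying and parallels the Gan--Ginzburg picture. Your route is shorter and avoids Lemma~\ref{action-map} altogether; the key observation that $(\ref{orthog})$ together with $(\ker\ad f,\m_\k)=0$ pins down $\SS_\k$ as the vanishing locus of $\a_\k$ is exactly the linear-algebraic content hidden in the paper's comparison of the two product decompositions.
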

\begin{proof}
(a) Let $\widehat{\mathcal E}_0:=\beta^{-1}(\SS_\k\cap Y_\chi)$, a
Zariski closed subset of $\widehat{\mathcal E}$. Since $\beta$ is
$M_\k$-equivariant, we have a natural morphism
$\gamma\colon\,M_\k\times\widehat{\mathcal E}_0\rightarrow
\widehat{\mathcal E}$. As $\beta$ is surjective, (\ref{542}) entails
that so is $\gamma$. If $p_1$ is the first projection
$Y_\chi\stackrel{\sim}{\rightarrow}\,M_\k\times (\SS_\k\cap
Y_\chi)\twoheadrightarrow\,M_\k$, then
$p_1(x)^{-1}(x)\in\widehat{\mathcal E}_0$ for every
$x\in\widehat{\mathcal E}$, and the the morphism
$$\widehat{\mathcal E}\map\,M_\k\times\widehat{\mathcal
E}_0,\qquad x\mapsto \big(p_1(x),p_1(x)^{-1}(x)\big),$$ is the
inverse of $\gamma$. Hence $\widehat{\mathcal E}\cong
M_\k\times\widehat{\mathcal E}_0$ as affine varieties.

\smallskip

\noindent(b) By Corollary~\ref{ab}, $\widehat{U}(\g_\k,e)^{\rm
ab}\cong U(\g_\k,e)^{\rm ab}\otimes Z_p(\a_\k)$. Since $Z_p(\a_\k)$
is a domain, it follows that $\k[\widehat{\mathcal E}]\,\cong\,
\k[{\mathcal E}(\k)]\otimes Z_p(\a_\k)$ as algebras. Therefore,
$Z_p(\a_\k)$ embeds into $\k[\widehat{\mathcal E}]$. It also follows
that the ideal $\k[\widehat{\mathcal E}]\a_\k$ of
$\k[\widehat{\mathcal E}]$ is radical and its zero locus, $\V$ say,
is isomorphic to ${\mathcal E}(\k)$. On the other hand, it is
evident from (\ref{orthog}) that the ideal of
$Z_p(\widetilde{\a}_\k)=\k[\chi+\m_\k^\perp]$ generated by $\a_\k$
is nothing but the defining ideal of $\SS_\k$ in
$\k[\chi+\m_\k^\perp]$. As a consequence, $\beta(\V)\subseteq
\SS_\k\cap Y_\chi$, implying $\V\subseteq \widehat{\mathcal E}_0$.

Now  $\widehat{\mathcal E}\cong \widehat{\mathcal E}_0\times M_\k$
by part~(a) and $\widehat{\mathcal E}\cong
\mathcal{E}(\k)\times{\mathbb A}_\k^{d(e)}$ by our earlier remarks
in this part. As $M_\k\cong {\mathbb A}_\k^{d(e)}$ and ${\mathcal
E}(\k)\cong \V$, we deduce that there exists a bijection $\tau$
between ${\rm Comp}(\V)$ and ${\rm Comp}(\widehat{\mathcal E}_0)$
such that $\dim X=\dim \tau(X)$ for all $X\in{\rm Comp}(\V)$. As
$\V\subseteq\widehat{\mathcal E}_0$, this yields
$\V=\widehat{\mathcal E}_0$ and statement~(1) follows.

\smallskip

\noindent (c) Let $I_1$ be the augmentation ideal of the Hopf
algebra $\k[M_\k]$. By part~(b), we can identify $\k[M_\k]\otimes
\k[{\mathcal E}(\k)]$ and $\k[M_\k]\otimes\k[\SS_\k\cap Y_\chi]$
with $\k[\widehat{\mathcal E}]$ and $\k[Y_\chi]$, respectively, in
such a way that $\widetilde{I}_1:=I_1\otimes\k[{\mathcal E}(\k)]$
identifies with the defining ideal of the closed subset
$\widehat{\E}_0\cong\E(\k)$ of $\widehat{\E}$. Since $\beta$ is
$M_\k$-equivariant, composing $\beta^*$ with the canonical
homomorphism $\k[\widehat{\mathcal E}]\twoheadrightarrow
\,\k[\widehat{\mathcal E}]/\widetilde{I}_1$ induces an algebra map
$\bar{\beta}^*\colon\,\k[Y_\chi]\map\,\k[{\mathcal E}(\k)]$ whose
kernel equals $I_1\otimes\k[\SS_\k\cap Y_\chi]$. Since $\beta$ is a
finite morphism and $\ker\bar{\beta}^*$ identifies with the defining
ideal of $\{1\}\times (\SS_\k\cap Y_\chi)\cong \,\SS_\k\cap Y_\chi$,
we thus obtain a finite morphism $\bar{\beta}\colon\,{\mathcal
E}(\k)\twoheadrightarrow\, \SS_\k\cap Y_\chi$. This completes the
proof. \end{proof}
\subsection{}\label{5.5} In order to obtain a
good lower bound on the number of irreducible components of
${\mathcal E}(\CC)$ we now need more information the affine variety
$\SS_\k\cap Y_\chi$.

For $d\in\N$, define
$\g_\k^{(d)}:=\{x\in\g_\k\,|\,\,\dim\,(\g_\k)_x=d\}$. When $p\gg 0$,
the centraliser $(\g_\k)_x$ coincides with the Lie algebra of
$(G_\k)_x=Z_{G_\k}(x)$ and $\dim\,(\g_\k)_x=\dim\,(G_\k)_x$ for all
$x\in\g_\k$; see [\cite{Ja1}], for instance. Since the set
$\g_\k^{(d)}$ is quasi-affine, it decomposes as a union of finitely
many irreducible components $\g_\k$. The irreducible components of
the $\g_\k^{(d)}$'s  are called {\it sheets} of $\g_\k$. The sheets
are $(\Ad G_\k)$-stable, locally closed subsets of $\g_\k$. By one
of the main result of [\cite{Borh}], there is a bijection between
the sheets of $\g_\k$ and the $G_\k$-conjugacy classes of pairs
$(\l,\O_0)$, where $\l$ is a Levi subalgebra of $\g_\k$ and $\O_0$
is a rigid nilpotent orbit in $[\l,\l]$. Borho's classification of
sheets remains valid over $\k$ under the assumption that ${\rm
char}\,\k$ is a good prime for the root system of $G$; see
(\ref{5.2}) for related references. By [\cite{BK1}, 5.8], every
sheet of $\g_\k$ contains a unique nilpotent orbit. However, outside
type $\rm A$ sheets are {\it not} disjoint, and when two sheets
overlap, they always contain the same nilpotent orbit.

Let $\l$ be a Levi subalgebra of $\g_\k$. The centre $\z(\l)$ of
$\l$ is a toral subalgebra of $\g_\k$, and $(\g_\k)_z\supseteq \l$
for all $z\in\z(\l)$. We denote by $\z(\l)_{\rm reg}$ the set of all
$z\in\z(\l)$ for which the equality $(\g_\k)_z=\l$ holds; this is a
nonempty Zariski open subset of $\z(\l)$. For a nilpotent element
$e_0\in [\l,\l]$ define ${\mathcal D}(\l,e_0):=(\Ad G_\k)\cdot
(e_0+\z(\l)_{\rm reg})$, a locally closed subset of $\g_\k$. We call
${\mathcal D}(\l, e_0)$ a {\it decomposition class} of $\g_\k$ (this
term has to do with the Jordan--Chevalley decomposition in $\g_\k$).
Each sheet ${\mathcal S}\subset \g_\k$ is a finite union of
decomposition classes and contains a unique open such class; see
[\cite{Borh}, 3.7]. Moreover, if ${\mathcal D}(\l, e_0)$ is open in
$\mathcal S$, then $\O_0:=(\Ad L)\cdot e_0$ is rigid in $[\l,\l]$,
the orbit ${\rm Ind}_\l^{\g_\k}(\O_0)$ is contained in $\mathcal S$,
and $\dim\,({\mathcal S}/G_\k)=\dim\z(\l)$. These results,
established in [\cite{Borh}, 3.2, 4.3 and 5.6], are valid under our
assumption on $p$.

Let $C(e):=(G_\k)_e\cap(G_\k)_f$. This is a reductive group and its
finite quotient $\Gamma(e):=C(e)/C(e)^\circ$ identifies naturally
with the component group $\Gamma(e):=(G_\k)_e/(G_\k)_e^\circ$; see
[\cite{P03}], for instance. If ${\mathcal S}(e)$ is a sheet
containing $e$, then the set $X:={\mathcal S}(e)\cap (e+\ker \ad f)$
is Zariski closed and connected. Indeed, since $e\in X$, this
follows from the fact that $X$ is preserved by the contracting
action of the $1$-dimensional torus $\rho_e(\k^\times)$ introduced
in (\ref{5.3}). Clearly, $X$ is stable under the adjoint action of
$C(e)$.

Assume for a moment that $\k=\CC$. In [\cite{Kats}], Katsylo proved
that the connected group $C(e)^\circ$ acts trivially on $X$ and the
irreducible components of $X$ are permuted transitively by the
component group $\Gamma(e)$. The action-morphism
$\varphi\colon\,G_\k\times X\map\, \mathcal S(e)$ is smooth,
surjective of relative dimension $\dim\,(\g_\k)_e$. By
[\cite{Kats}], it gives rise to an open morphism
$\psi\colon\,{\mathcal S}(e)\map X/\Gamma(e)$, whose fibres are
$(\Ad G_\k)$-orbits, such that for any open set $U\subseteq
X/\Gamma(e)$ the induced map $\k[U]\map\,\k[ \psi^{-1}(U)]^{G_\k}$
is an isomorphism. In brief, $\psi$ is a geometric quotient. Since
$\Gamma(e)$ acts transitively on ${\rm Comp}(X)$, it is
straightforward to see that $X/\Gamma(e)\,=\,{\rm Specm}\,
\k[X]^{\Gamma(e)}$ is an irreducible affine variety.

A purely algebraic (and rather short) proof of Katsylo's results was
given in [\cite{Im}]. It is a matter of routine to check that this
proof works under our assumption on $p$.

Summarising, if ${\mathcal D}(\l,e_0)$ is the open decomposition
class in ${\mathcal S}(e)$, then $e\in{\rm Ind}_\l^{\g_\k}\,\O_0$,
the orbit $\O_0=(\Ad\,L)\cdot e_0$ is rigid in $[\l,\l]$, and
\begin{equation}\label{Gamma(e)}
\dim\,\z(\l)\,=\,\dim\,{\mathcal S}(e)/G_\k\,=\,\dim\,X_i\qquad\quad
\forall\,X_i\in\mathrm{Comp}(X).
\end{equation}
\subsection{}\label{5.6}
Let ${\mathcal S}_1,\ldots, {\mathcal S}_t$ be the pairwise distinct
sheets of $\g_\k$ containing our nilpotent element $e$. For $1\le
i\le t$ set $X_i:={\mathcal S}_i\cap (e+\ker\ad f)$ and denote by
${\mathcal D}(\l_i,e_i)$ the open decomposition classes of
${\mathcal S}_i$. Recall from (\ref{5.3}) the Killing isomorphism
$\kappa\colon\,\g_\k\stackrel{\sim}{\rightarrow}\g_\k^*$ and put
$Y_i\,:=\,\kappa(X_i)\,=\,\kappa({\mathcal S}_i)\cap\SS_\k$, where
$1\le i\le t$.
\begin{prop}\label{Y-chi}
The following are true for all $p\gg0$:
\begin{itemize}
\item[(i)\,]
$Y_\chi\cap \SS_\k\,\subseteq\, Y_1\cup\ldots\cup Y_t$.

\smallskip

\item[(ii)\,] $\dim\, {\mathcal E}(\CC)=\dim\,{\mathcal E}(\k)\,\le\, \max_{1\le i\le t}\,
\dim\,\z(\l_i)$.

\smallskip

\item[(iii)\,] If $e$ is rigid, then ${\mathcal E}(\k)$ and
${\mathcal E}(\CC)$ are finite sets of the same cardinality.
\end{itemize}
\end{prop}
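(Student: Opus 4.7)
For (i), I take $\eta \in Y_\chi \cap \SS_\k$, write $\eta = \kappa(x)$ with $x \in e + \ker\ad f$, and aim to exhibit $i \in \{1,\ldots,t\}$ with $x \in X_i$. The character of $\widehat{U}(\g_\k,e)^{\rm ab}$ sitting above $\eta$ factors through $\widehat{U}_\eta(\g_\k,e) := \widehat{U}(\g_\k,e)\otimes_{Z_p(\widetilde{\a}_\k)}\k_\eta$, which is canonically isomorphic to $U_\eta(\g_\k,e)$ by the dimension argument in part (b) of the proof of Theorem~\ref{U-U}, combining Theorem~\ref{U-hat}(ii) and Lemma~\ref{Q-eta}(iv). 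The Morita equivalence $U_\eta(\g_\k) \cong \Mat_{p^{d(e)}}(U_\eta(\g_\k,e))$ of Lemma~\ref{Q-eta}(iii) then produces an irreducible $U_\eta(\g_\k)$-module of dimension exactly $p^{d(e)}$. Invoking [\cite{P95}] forces $p^{d(e)} \ge p^{(\dim\g_\k - \dim\z_\eta)/2}$, hence $\dim\z_\eta \ge r$, equivalently $\dim(G_\k)_x \ge r$.

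The reverse inequality comes from transversality of the Slodowy slice. For $p \gg 0$ the condition $(\g_\k)_f \cap [e,\g_\k] = 0$ holds, and (as discussed in (\ref{5.2}) and implicit in Lemma~\ref{action-map}) the action morphism $G_\k \times (e+\ker\ad f) \to \g_\k$ is smooth, so $[\g_\k,y] + \ker\ad f = \g_\k$ for every $y$ in the slice; this forces $\dim(G_\k)_y \le r$ throughout $e + \ker\ad f$. Combined with the previous step, $\dim(G_\k)_x = r$ and $x \in \g_\k^{(r)}$. Now consider the morphism $\phi_x : \mathbb{A}^1_\k \to \g_\k$, $t \mapsto \rho_e(t)(x)$; the contracting computation used in the proof of Lemma~\ref{action-map} gives $\phi_x(0) = e$ and $\phi_x(\mathbb{A}^1_\k) \subseteq e+\ker\ad f$. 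For $t \in \k^\times$ the element $\rho_e(t)(x) = t^2\,\Ad\lambda_e(t^{-1})(x)$ has centraliser of the same dimension as $(G_\k)_x$ (both scalar multiplication and adjoint conjugation preserve stabiliser dimension), so $\rho_e(t)(x) \in \g_\k^{(r)}$; trivially $e \in \g_\k^{(r)}$ too. The irreducible subset $\phi_x(\mathbb{A}^1_\k) \subseteq \g_\k^{(r)}$ is therefore contained in some irreducible component $\mathcal{S}$ of the locally closed stratum $\g_\k^{(r)}$, i.e.\ in some sheet of $\g_\k$. Since $e$ lies on this curve, $\mathcal{S}$ is one of $\mathcal{S}_1,\ldots,\mathcal{S}_t$, and then $x \in \mathcal{S} \cap (e+\ker\ad f) = X_i$, proving (i).

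For (ii), Lemma~\ref{base change} gives $\dim\,\mathcal{E}(\CC) = \dim\,\mathcal{E}(\k)$ for $p \gg 0$, while the finite surjection $\bar{\beta}:\mathcal{E}(\k) \twoheadrightarrow \SS_\k \cap Y_\chi$ of Proposition~\ref{E-hat}(2) yields $\dim\mathcal{E}(\k) = \dim(\SS_\k \cap Y_\chi)$. By part (i) this is bounded above by $\max_i \dim Y_i = \max_i \dim X_i = \max_i \dim\z(\l_i)$, the last equality being Katsylo's formula (\ref{Gamma(e)}). For (iii), if $e$ is rigid then no proper Levi $\l$ supplies a pair $(\l, \O_0)$ with $\mathrm{Ind}_\l^{\g_\k}\O_0 = \O_\k$, so the only sheet containing $e$ is $\O_\k$ itself; Slodowy transversality forces $X_1 = \O_\k \cap (e+\ker\ad f) = \{e\}$. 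Part (i) then shrinks $\SS_\k \cap Y_\chi$ to at most the single point $\chi$, making $\mathcal{E}(\k)$ finite via $\bar{\beta}$, and Lemma~\ref{base change} transports finiteness and matching cardinality to $\mathcal{E}(\CC)$. The main technical nuisance is to secure \emph{global} (not merely infinitesimal) transversality of the Slodowy slice and to verify that the $\rho_e$-curve through $x$ genuinely remains inside $\g_\k^{(r)}$ --- both are standard in characteristic zero, but in the modular setting they rest on the hypothesis $(\g_\k)_f\cap[e,\g_\k]=0$, which holds only for $p \gg 0$.
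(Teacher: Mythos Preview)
Your argument is correct and follows the paper's overall strategy: obtain a one-dimensional $U_\eta(\g_\k,e)$-module, convert it via Lemma~\ref{Q-eta}(iii) into a $p^{d(e)}$-dimensional $U_\eta(\g_\k)$-module, apply the Kac--Weisfeiler bound for $\dim\z_\eta\ge r$, get the reverse inequality, and then use the contracting $\rho_e$-action to see that any sheet through $x$ already contains $e$. Parts~(ii) and~(iii) are likewise handled as in the paper.

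The one substantive difference is how you obtain $\dim(G_\k)_x\le r$. You appeal to global transversality of the Slodowy slice (smoothness of $G_\k\times(e+\ker\ad f)\to\g_\k$), which is true but is itself typically proved via the contracting action. The paper is more direct here: it simply observes that $e$ lies in the Zariski closure of $\rho_e(\k^\times)(e+x)$, that $\rho_e(t)(e+x)$ and $e+x$ have centralisers of the same dimension, and then invokes upper semicontinuity of $y\mapsto\dim(\g_\k)_y$ to conclude $\dim\z_\eta\le\dim(\g_\k)_e=r$. This avoids quoting the global smoothness statement altogether and keeps the argument self-contained within the tools already set up in (\ref{5.3}).

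A minor point in~(iii): ``Slodowy transversality'' alone gives only that $X_1=\O_\k\cap(e+\ker\ad f)$ is zero-dimensional, not that it is the single point~$\{e\}$. The paper gets $X_1=\{e\}$ from the additional fact (noted in (\ref{5.5})) that $X_1$ is connected, being stable under the contracting $\rho_e$-action. For your purposes finiteness of $X_1$ already suffices, so the conclusion of~(iii) is unaffected.
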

\begin{proof}
If $\eta\in Y_\chi$, then the definition of $\beta^*$ in (\ref{5.4})
shows that the algebra
$\widehat{U}_\eta(\g_\k,e)=\widehat{U}(\g_\k,e)\otimes_{Z_p(\widetilde{\a}_\k)}\k_\eta$
affords a $1$-dimensional representation. In part~(b) of the proof
of Theorem~\ref{U-U} we have shown that this algebra is isomorphic
to $U_\eta(\g_\k,e)$. By Lemma~\ref{Q-eta}(iii), the reduced
enveloping algebra $U_\eta(\g_\k)$ affords a representation of
dimension $p^{d(e)}$. Then [\cite{P95}, Thm.~3.10] yields
$\dim\z(\eta)\le d(e)$.

On the other hand, our discussion in (\ref{5.3}) shows that
$\eta=\kappa(e+x)$ for some $x\in\bigoplus_{i\le 1}\,\g_\k(i)$.
Since $e$ lies in the Zariski closure of $\rho_e(\k^\times)(e+x)$
and the centralisers of $\rho_e(t)(e+x)$ and $e+x$ in $\g_\k$ have
the same dimension for all $t\in\k^\times$, it must be that $\dim
\z(\eta)\ge r$. As a result, $e+x\in\g_\k^{(r)}$. Every irreducible
component of $\g_\k^{r)}$ containing $e+x$ must contain
$\rho_e(\k^\times)(e+x)$ and hence $e$. This yields
$$Y_\chi\,\subseteq \,\textstyle{\bigcup}_{1\le i\le t}\,\big(\kappa({\mathcal
S}_i)\cap (\chi+\m_\k^\perp)\big),$$ from which statement~(i) is
immediate. Since $\dim (Y_\chi\cap\SS_\k)=\dim\,{\mathcal E}(\k)$ by
Proposition~\ref{E-hat}(2) and $\dim\,\E(\k)=\dim\,\E(\CC)$ by
Lemma~\ref{base change}, statement~(ii) now follows from
(\ref{Gamma(e)}). When $e$ is rigid, there is only one sheet
containing $e$, namely, the orbit $\O=(\Ad G_\k)\cdot e$. So
(\ref{Gamma(e)}) implies that $X=\O\cap (e+\ker\ad f) =\{e\}$ (for
$X$ is connected). Then (i) shows that either
$Y_\chi\cap\SS_\k=\{\chi\}$ or $Y_\chi\cap\SS_\k=\varnothing$. By
Proposition~\ref{E-hat}(2) and Lemma~\ref{base change}, the sets
$\E(\CC)$ and $\E(\k)$ are finite and have the same cardinality.
\end{proof}

We say that $\g$ is {\it well-behaved} if for any proper Levi
subalgebra $\l$ of $\g$ and any nilpotent element $e_0\in\l$ the
finite $W$-algebra $U([\l,\l],e_0)$ admits a $1$-dimensional
representation. Thanks to [\cite{Mc}, Ch.~5] and [\cite{Lo},
Thm.~1.2.3(1)] the Lie algebras of types ${\rm
A_\ell,\,B_\ell,\,C_\ell ,\,D_\ell,\,G_2,\,F_4,\,E_6}$ are
well-behaved (in these cases all irreducible components of the
proper subsets of $\Pi$ have type ${\rm A,\,B,\,C,\, D}$).
\begin{prop}\label{behave}
If $\g$ is well-behaved and $e$ is not rigid, then $Y_\chi\cap\,
\SS_\k\, = \,Y_1\cup\ldots\cup Y_t$ for all $p\gg 0$.
\end{prop}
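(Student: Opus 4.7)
The plan is to establish $Y_i\subseteq Y_\chi$ for each $i\in\{1,\dots,t\}$, after which Proposition~\ref{Y-chi}(i) yields the claimed equality (noting that $Y_i\subseteq\SS_\k$ is automatic from $Y_i=\kappa(X_i)$ and $X_i\subset e+\ker\ad f$). Since $Y_\chi$ is Zariski closed in $\chi+\m_\k^\perp$ by the construction in (\ref{5.4}), I will verify the inclusion on a Zariski dense subset and extend by closedness. The dense subset of choice is $U_i:=\kappa\bigl(\mathcal{D}(\l_i,e_i)\cap X_i\bigr)$, where $\mathcal{D}(\l_i,e_i)$ is the open decomposition class of $\mathcal{S}_i$. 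Its density in $Y_i$ will follow from Katsylo's smooth surjective action-morphism $G_\k\times X_i\to\mathcal{S}_i$ (so every $G_\k$-orbit inside $\mathcal{D}(\l_i,e_i)$ meets $X_i$) together with the $\Gamma(e)$-transitivity on the irreducible components of $X_i$, both recalled in (\ref{5.5}).

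The key step will be to show that $U_\eta(\g_\k)$ admits an irreducible module of dimension $p^{d(e)}$ for each $\eta=\kappa(y)$ with $y\in\mathcal{D}(\l_i,e_i)\cap X_i$. Given this, the point $\eta$ will lie in $Y_\chi$: Lemma~\ref{Q-eta}(iii) converts the small module into a one-dimensional representation of $U_\eta(\g_\k,e)$, and the canonical map $\widehat{U}_\eta(\g_\k,e)\to U_\eta(\g_\k,e)$ is always an isomorphism by the dimension comparison carried out in part~(b) of the proof of Theorem~\ref{U-U} (using Theorem~\ref{U-hat}(ii) and Lemma~\ref{Q-eta}(iv)), so $\widehat{U}_\eta(\g_\k,e)$ acquires a one-dimensional representation and hence $\eta\in Y_\chi$ by the definition of $\beta^*$. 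After a harmless $G_\k$-conjugation the Jordan--Chevalley decomposition of $y$ takes the form $y=y_s+e_i$ with $y_s\in\z(\l_i)_{\rm reg}$ and $\z(y_s)=\l_i$; since $e$ is not rigid, $\l_i$ is a proper Levi of $\g_\k$. The Kac--Weisfeiler/Friedlander--Parshall reduction (valid for $p\gg 0$) then gives $U_\eta(\g_\k)\cong\Mat_{p^{d_0}}\bigl(U_\eta(\l_i)\bigr)$ with $d_0=(\dim\g_\k-\dim\l_i)/2$, and the decomposition $\l_i=[\l_i,\l_i]\oplus\z(\l_i)$ further yields $U_\eta(\l_i)\cong U_{\kappa(e_i)}([\l_i,\l_i])\otimes_\k U_{\kappa(y_s)}(\z(\l_i))$; the second tensor factor is commutative semisimple and admits one-dimensional modules automatically.

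To treat the remaining factor $U_{\kappa(e_i)}([\l_i,\l_i])$ I plan to lift $(\l_i,e_i)$ to characteristic zero via the Bala--Carter classification, obtaining a proper Levi $\l_i^\CC\subset\g$ and a nilpotent $e_i^\CC\in[\l_i^\CC,\l_i^\CC]$ rigid there. The well-behaved hypothesis then supplies a one-dimensional representation of $U([\l_i^\CC,\l_i^\CC],e_i^\CC)$, and Theorem~\ref{U-U} applied to this pair produces, for $p\gg 0$, an irreducible $U_{\kappa(e_i)}([\l_i,\l_i])$-module of dimension $p^{d_{[\l_i,\l_i]}(e_i)}$. Combining with the KW decomposition, and using $(\g_\k)_y=(\l_i)_{e_i}$ of dimension $r$ to verify $d_0+d_{[\l_i,\l_i]}(e_i)=d(e)$, one obtains the desired irreducible $U_\eta(\g_\k)$-module of dimension $p^{d(e)}$. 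The main technical subtlety will be organising the Bala--Carter lifting uniformly in $i$ and confirming that the various ``$p\gg 0$'' conditions can be met simultaneously; granted this, the geometric heart of the argument is simply the density of $U_i$ in $Y_i$ together with the closedness of $Y_\chi$.
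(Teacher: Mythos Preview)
Your proposal is correct and follows essentially the same route as the paper: reduce to showing that $U_\eta(\g_\k)$ has a module of dimension $p^{d(e)}$ for $\eta$ in the dense subset $\kappa(\mathcal{D}(\l_i,e_i)\cap X_i)$ of $Y_i$, then use the Kac--Weisfeiler reduction together with the well-behaved hypothesis and Theorem~\ref{U-U}. The density argument via Katsylo's action-map and $\Gamma(e)$-transitivity, the passage from a small $U_\eta(\g_\k)$-module to a one-dimensional $\widehat{U}_\eta(\g_\k,e)$-representation, and the final closedness step all match the paper's proof; your remark about handling the finitely many ``$p\gg 0$'' conditions simultaneously is a harmless bookkeeping point.
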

\begin{proof}
Since $\beta$ is a closed morphism, we just need to show that
$\beta(\widehat{\E})$ contains an open dense subset of each $Y_i$.
By (\ref{5.5}), the adjoint action-map $\varphi\colon\, G_\k\times
X_i\map\,{\mathcal S}_i$ is surjective. As ${\mathcal D}(\l_i,e_i)$
is open in ${\mathcal S}_i$ and $C(e)$ permutes the components of
$X_i$ transitively, the set $\varphi^{-1}({\mathcal D}(\l_i,e_i))$
is open dense in $G_\k\times X_i$. Looking at the image of
$\varphi^{-1}({\mathcal D}(\l_i,e_i))$ under the second projection
$G_\k\times X_i\twoheadrightarrow\,X_i$ we observe that the set
$$X_i^{\rm reg}:=\,{\mathcal D}(\l_i,e_i)\cap (e+\ker\ad f)$$
contains an open dense subset of $X_i$. We are thus reduced to show
that for every $\eta\in \kappa(X_i^{\rm reg})$ the algebra
$\widehat{U}_\eta(\g_\k,e)$ has a $1$-dimensional representation. As
explained in part~(b) of the proof of Theorem~\ref{U-U} this is
equivalent to showing that the reduced enveloping algebra
$U_\eta(\g_\k)$ has a module of dimension $p^{d(e)}$. Note that
$\l_i$ is a proper Levi subalgebra of $\g_\k$ (otherwise $e$ would
be rigid in $\g_\k$).

As every element of ${\mathcal D}(\l_i,e_i)$ is $(\Ad
G_\k)$-conjugate to an element in $e_i+\z(\l_i)_{\rm reg}$, no
generality will be lost by assuming that $\eta=\eta_s+\eta_n$, where
$\eta_n=(e_i,\,\cdot\,)$ and $\eta_s=(z,\,\cdot\,)$ for some
$z\in\z(\l_i)_{\rm reg}$. Since $\eta=\eta_s+\eta_n$ is the Jordan
decomposition of $\eta$ and $\z(\eta_s)=(\g_\k)_z=\l_i$, applying
the Kac--Weisfeiler theorem (as generalised by
Friedlander--Parshall) we derive that $U_\eta(\g_\k)\cong
\Mat_{p^{m_i}}\big(U_{\eta}(\l_i)\big)$, where
$m_i=(\dim\,\g_\k-\dim\,\l_i)/2$; see [\cite{P02}, 2.5], for
instance.

As $U_{\eta}(\l_i)\cong U_{\eta_n}([\l_i,\l_i])\otimes
U_{\eta}(\z(\l_i))$ and $\dim\, (\Ad L_i)\cdot e_i=\,d(e)-m_i$, it
remains to show that the reduced enveloping algebra
$U_{\eta_n}([\l_i,\l_i])$ has a module of dimension
$p^{(d(e)-m_i)/2}$. But this follows from Theorem~\ref{U-U} by our
assumption on $\g$.
\end{proof}
\subsection{}\label{5.7} We are now in a position to state and prove the main result of
this section:

\begin{theorem}\label{E-C}
Suppose $\g$ is well-behaved and let $e$ be any nonrigid nilpotent
element of $\g$. Let ${\mathcal S}_1,\ldots, {\mathcal S}_t$ be the
pairwise distinct sheets of $\g$ containing $e$. Let ${\mathcal
D}(\l_i,e_i)$ be the open decomposition class of ${\mathcal S}_i$
and $X_i={\mathcal S}_i\cap (e+\ker\ad f)$, where $1\le i\le t$.
Then there exists a surjection
$${\rm Comp}(\E(\CC))\twoheadrightarrow\,{\rm
Comp}(X_1)\sqcup\ldots\sqcup{\rm Comp}(X_t)$$ such that for every
component $\mathcal Y$ of $\E(\CC)$ lying over ${\rm Comp}(X_i)$ the
equality $\dim\,{\mathcal Y}=\dim\,\z(\l_i)$ holds.
\end{theorem}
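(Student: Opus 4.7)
The plan is to transfer the problem to characteristic $p$ and then read off the components of $\E$ from the image of the finite morphism $\bar\beta$ of Proposition~\ref{E-hat}(2).

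First, Lemma~\ref{base change} gives a dimension-preserving bijection $\mathrm{Comp}(\E(\CC))\stackrel{\sim}{\to}\mathrm{Comp}(\E(\k))$ for all $p\gg 0$, so it suffices to construct a surjection $\tau\colon\mathrm{Comp}(\E(\k))\twoheadrightarrow\bigsqcup_{i=1}^t\mathrm{Comp}(X_i)$ such that $\dim\mathcal Y=\dim\z(\l_i)$ whenever $\tau(\mathcal Y)\in\mathrm{Comp}(X_i)$. By Proposition~\ref{E-hat}(2) the morphism $\bar\beta\colon\E(\k)\twoheadrightarrow\SS_\k\cap Y_\chi$ is finite and surjective; by Proposition~\ref{behave} its target equals $\bigcup_{i=1}^t\kappa(X_i)$; and by (\ref{Gamma(e)}) every component in $\mathrm{Comp}(X_i)$ has dimension $\dim\z(\l_i)$.

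Given $\mathcal Y\in\mathrm{Comp}(\E(\k))$, finiteness of $\bar\beta$ yields that $\bar\beta(\mathcal Y)$ is closed and irreducible in $\bigcup_i\kappa(X_i)$ with $\dim\bar\beta(\mathcal Y)=\dim\mathcal Y$; hence $\bar\beta(\mathcal Y)$ is contained in $\kappa(Z)$ for some $Z\in\mathrm{Comp}(X_i)$ and some $i$, giving $\dim\mathcal Y\le\dim\z(\l_i)$. For surjectivity onto $(i,Z)$: the closed set $\bar\beta^{-1}(\kappa(Z))$ has pure dimension $\dim\kappa(Z)=\dim\z(\l_i)$ (by finiteness) and surjects onto the irreducible $\kappa(Z)$, so one of its irreducible components $W$ maps dominantly onto $\kappa(Z)$ with $\dim W=\dim\z(\l_i)$. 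This $W$ lies inside some $\mathcal Y_{(i,Z)}\in\mathrm{Comp}(\E(\k))$, and I would set $\tau(\mathcal Y_{(i,Z)}):=(i,Z)$.

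The main obstacle is making this assignment well-defined while achieving the dimensional equality. A component $\mathcal Y$ arising from the construction of $W$ satisfies $\bar\beta(\mathcal Y)\supseteq\kappa(Z)$, so $\dim\mathcal Y\ge\dim\z(\l_i)$; the needed equality is equivalent to $\bar\beta(\mathcal Y)=\kappa(Z)$. If instead $\bar\beta(\mathcal Y)$ strictly contains $\kappa(Z)$, then by irreducibility $\bar\beta(\mathcal Y)\subseteq\kappa(Z')$ for some $Z'\in\mathrm{Comp}(X_j)$ with $\kappa(Z)\subsetneq\kappa(Z')$, and $\mathcal Y$ should rather be assigned to $(j,Z')$. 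The plan is to iterate upward in the partial order on $\bigsqcup_i\mathrm{Comp}(X_i)$ induced by inclusion of the corresponding $\kappa(Z)$'s, consistently sending each component $\mathcal Y$ to a maximal-type pair it fills, and then using the $W$-construction to hit each non-maximal $(i,Z)$ by a component with $\bar\beta(\mathcal Y)=\kappa(Z)$ exactly. Executing this order-theoretic bookkeeping — using Borho's classification of sheets and Katsylo's description of $\mathrm{Comp}(X_i)$ as a single $\Gamma(e)$-orbit to control how sheets may overlap, together with the bound $\dim\E(\k)\le\max_i\dim\z(\l_i)$ from Proposition~\ref{Y-chi}(ii) — is the technical heart of the argument; once it is in place, the dimensional equality is immediate from the finiteness of $\bar\beta$.
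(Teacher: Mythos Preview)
Your overall strategy coincides with the paper's: pass to characteristic $p$ via Lemma~\ref{base change}, invoke the finite surjection $\bar\beta\colon\E(\k)\twoheadrightarrow \SS_\k\cap Y_\chi$ of Proposition~\ref{E-hat}(2), identify its image as $\bigcup_i\kappa(X_i)$ via Proposition~\ref{behave}, and use Katsylo's equidimensionality (\ref{Gamma(e)}). The paper's proof is far more laconic than yours: after assembling these ingredients it simply declares that ``to finish the proof it suffices now to apply'' them, without any component-by-component bookkeeping.

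One ingredient the paper supplies that you omit: the $X_i$ in the theorem live over $\CC$, whereas $\bar\beta$ lands in the characteristic~$p$ sections $X_{i,\k}:={\mathcal S}_{i,\k}\cap(e+\ker\ad f)$. The paper devotes a paragraph to showing that, for $p\gg 0$, the sheets of $\g_\k$ through $e$ correspond bijectively (and dimension-preservingly) to those of $\g$ through $e$ --- because rigidity and induction transfer well under reduction (cf.\ part~(a) of the proof of Theorem~\ref{rigid}) and Borho's classification holds in good characteristic --- and that each $X_{i,\k}$ is again equidimensional of dimension $\dim\z(\l_i)$. You should record this step.

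As for the difficulty you flag --- that a component $Z\in\mathrm{Comp}(X_i)$ might sit strictly inside some $Z'\in\mathrm{Comp}(X_j)$, potentially leaving too few components of $\E(\k)$ of the right dimension to surject onto the disjoint union --- the paper does not engage with it at all; it treats the surjection and the dimension equality as immediate consequences of finiteness of $\bar\beta$ and equidimensionality of the $X_{i,\k}$. So you are not missing a key lemma present in the paper: your ``order-theoretic bookkeeping'' is genuinely additional analysis beyond what is written there. Whether it is truly needed (i.e., whether such nesting can actually occur for Katsylo sections of distinct sheets through a common $e$) is a fair question, but the paper offers no argument either way; it simply asserts the conclusion.
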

\begin{proof}
We may assume that $e\in\g_\Z$, that $\l_1,\ldots,\l_t$ are standard
parabolic subalgebras of $\g$, and that $e_i\in\l_\Z$ for all $i$.
We then may regard $e$ and $e_i$ as nilpotent elements of $\g_\k$
and $\l_{i,\,\k}$, respectively. Arguing as in part~(a) of the proof
of Theorem~\ref{rigid}, one observes that for $p\gg 0$ each $e_i$ is
rigid in $\l_{i,\,\k}$ and $e$ is not rigid in $\g_\k$. By
Lemma~\ref{base change}, there is a dimension preserving bijection
between ${\rm Comp}(\E(\CC))$ and ${\rm Comp}(\E(\k))$.

Let ${\mathcal S}_\k$ be a sheet of $\g_\k$ containing $e$ and let
${\mathcal D}(\l,e_0)$ be the open decomposition class of ${\mathcal
S}_k$. Since $\l$ is $(\Ad G_\k)$-conjugate to a standard Levi
subalgebra and $e\in{\rm Ind}_\l^{\g_k}\,\O_0$ for some rigid
nilpotent orbit $\O_0\subset [\l,\l]$, our discussion in (\ref{5.5})
shows that there is a dimension preserving bijection between the
sheets of $\g$ containing $e$ and those of $\g_\k$ containing its
image in $\g_\k$. Moreover, every sheet of $\g_\k$ containing
$e\in\g_\k$ has the form  $${\mathcal
S}_{i,\,\k}\,:=\,\overline{{\mathcal D}(\l_{i,\,\k},e_i)}
\cap\g_\k^{(r)},\qquad\quad 1\le i\le t.$$ By our discussion in
(\ref{5.5}), each variety $X_{i,\,\k}:={\mathcal
S}_{i,\,\k}\cap(e+\ker\ad f)$ is equidimensional of dimension
$\dim\,\z(\l_i)$. To finish the proof it suffices now to apply
Theorem~\ref{behave} and Proposition~\ref{E-hat}(ii).
\end{proof}
\begin{rem}
In [\cite{P07}, 3.4] the author made the following conjecture:
\begin{itemize}
\item[1.]\, Every finite $W$-algebra $U(\g,e)$ has an ideal of codimension $1$.

\smallskip

\item[2.]\, The ideals of codimension $1$ in $U(\g,e)$ are finite in number if
and only if the orbit $(\Ad G)\cdot e$ is rigid.

\smallskip

\item[3.]\, For any ideal $I$ of codimension $1$ in $U(\g,e)$ the
annihilator of the $U(\g)$-module
$Q_\chi\otimes_{U(\g,\,e)}(U(\g,e)/I)$ is a completely prime ideal
of $U(\g)$.
\end{itemize}
Theorem~\ref{rigid} reduces part~1 of this conjecture to the case
where $e$ is rigid in $\g$, whereas Theorem~\ref{E-C} and
Proposition~\ref{Y-chi}(iii) show that part~1 implies part~2. Part~3
was recently proved by Losev, who also confirmed part~1 for the Lie
algebras of classical types; see [\cite{Lo}]. As far as I am aware,
part~1 remains open for some rigid nilpotent orbits in Lie algebras
of types ${\rm F_4,\,E_6,\,E_7,\,E_8}$. There are indications that
these open cases will soon be tackled by computational methods.
\end{rem}
\subsection{}\label{5.8} As an application of Theorem~\ref{E-C} we
now wish to describe the commutative quotient $U(\g,e)^{\rm ab}$ for
$\g=\mathfrak{gl}(N)$. We are going to use the explicit presentation
of $U(\g,e)$ obtained by Brundan--Kleshchev in [\cite{BrK}]. Given a
partition $\mu=(q_1\ge\cdots \ge q_m)$ of $N$ with $m$ parts we
denote by $\g(\mu)$ the standard Levi subalgebra
$\mathfrak{gl}({q_1})\oplus\cdots\oplus\mathfrak{gl}({q_m})$ of
$\mathfrak{gl}(N)$. Note that the centre of $\mathfrak{gl}(\mu)$ has
dimension $m$.

Let $\lambda=(p_n\ge p_{n-1}\ge\cdots\ge p_1)$ be a partition of $N$
with $n$ parts. As in [\cite{BrK}], we associate with $\lambda$ a
nilpotent element $e=e_\lambda\in\mathfrak{gl}(N)$ of Jordan type
$(p_1,p_2,\ldots, p_n)$. By [\cite{BrK}, Thm.~10.1], the finite
$W$-algebra $U(\g,e)$ is isomorphic to the shifted truncated Yangian
$Y_{n,\,l}(\sigma)$ of level $l:=p_n$. Here $\sigma$ is an upper
triangular matrix of order $n$ with nonnegative integral entries;
see [\cite{BrK}, \S~7] for more detail. It follows from the main
results of [\cite{BrK}] that $U(\g,e)$ is generated by elements
\begin{eqnarray}
&&\{D_{i}^{(r)}\in U(\g,e)\,|\,\,\,1\le i\le n;\,\,r\ge 1\},\nonumber\\
&&\{E_i^{(r)}\in U(\g,e)\,|\,\,\,1\le i\le n-1;\,\, r>p_{i+1}-p_i\},\\
&&\{F_i^{(r)}\in U(\g,e)\,|\,\,\,1\le i\le n-1;\,\, r\ge
1\},\nonumber
\end{eqnarray}
with $D_1^{(r)}=0$ for $r>p_1$, subject to certain relations; see
[\cite{BrK}, (2.4)\,--\,(2.15)].

Recall from [\cite{P07}, p.~524] that the centre $Z(\g)$ of the
universal enveloping algebra $U(\g)$ identifies canonically with the
centre of $U(\g,e)$ (this holds for for any simple Lie algebra $\g$
and any nilpotent element $e\in\g$).
\begin{theorem}\label{gln} If $\g=\mathfrak{gl}(N)$ and
$e=e_\lambda$, then $U(\g,e)^{\rm ab}$ is isomorphic to a polynomial
algebra in $l=p_n$ variables.
\end{theorem}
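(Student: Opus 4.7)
The plan is to combine Theorem~\ref{E-C}, which determines the Krull dimension of $U(\g,e_\lambda)^{\rm ab}$, with the explicit Brundan--Kleshchev presentation $U(\g,e_\lambda)\cong Y_{n,l}(\sigma)$, which permits one to compute the abelianization by direct manipulation of generators and relations. The first step is to show $\dim U(\g,e_\lambda)^{\rm ab}=l$. In type ${\rm A}$ the only rigid nilpotent orbit in any Levi subalgebra is the zero orbit, so by the Lusztig--Spaltenstein theory any nonzero $e_\lambda$ is induced from the zero orbit in a unique Levi $\mathfrak{gl}(\lambda^*):=\mathfrak{gl}(\lambda_1^*)\oplus\cdots\oplus\mathfrak{gl}(\lambda_l^*)$, where $\lambda^*$ is the transpose partition of $\lambda$. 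Hence $e_\lambda$ lies in exactly one sheet ${\mathcal S}$, whose open decomposition class is ${\mathcal D}(\mathfrak{gl}(\lambda^*),0)$. Since centralisers in $\mathrm{GL}(N)$ are connected, the component group $\Gamma(e_\lambda)$ is trivial and $X={\mathcal S}\cap(e_\lambda+\ker\ad f)$ is irreducible. Theorem~\ref{E-C} then yields that every component of $\E(\CC)$ has dimension $\dim\z(\mathfrak{gl}(\lambda^*))$, which equals the number of parts of $\lambda^*$, i.e.\ the largest part $p_n=l$ of $\lambda$. The rigid case $\lambda=(1,\ldots,1)$, $e_\lambda=0$, is handled by the direct computation $U(\mathfrak{gl}(N))^{\rm ab}\cong\CC[\overline{e_{11}}]$, consistent with $l=1$.

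The second step uses the Brundan--Kleshchev relations of [\cite{BrK}] to construct a surjection $\phi\colon\,\CC[X_1,\ldots,X_l]\twoheadrightarrow U(\g,e_\lambda)^{\rm ab}$. The commutator relations $[D_i^{(1)},E_j^{(s)}]=(\delta_{ij}-\delta_{i,j+1})E_j^{(s)}$ and their $F$-analogues force $\bar E_j^{(s)}=\bar F_j^{(s)}=0$ in $U(\g,e_\lambda)^{\rm ab}$: each commutator dies in the abelianization, while the scalar $\delta_{ij}-\delta_{i,j+1}$ can be made nonzero by choosing $i=j$. Hence $U(\g,e_\lambda)^{\rm ab}$ is generated by the images $\bar D_i^{(r)}$ of the Cartan-type generators alone. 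The quadratic relations $[E_i^{(r)},F_j^{(s)}]=\delta_{ij}\sum_t D_i'^{(r+s-1-t)}D_{i+1}^{(t)}$, with $\mathcal{D}_i'(u)=\mathcal{D}_i(u)^{-1}$, then pass to the abelianization to give the power-series identity $\mathcal{D}_i'(u)\mathcal{D}_{i+1}(u)=1$, i.e.\ $\mathcal{D}_{i+1}(u)=\mathcal{D}_i(u)$ (up to an argument shift built from $\sigma$). All the series $\mathcal{D}_i(u)$ thus collapse to a single series $\bar{\mathcal{D}}(u)=1+\bar D^{(1)}u^{-1}+\bar D^{(2)}u^{-2}+\cdots$ in $U(\g,e_\lambda)^{\rm ab}[[u^{-1}]]$, whose nonzero coefficients are precisely $\bar D^{(1)},\ldots,\bar D^{(l)}$ by the level-$l$ truncation built into $Y_{n,l}(\sigma)$. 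Sending $X_r\mapsto\bar D^{(r)}$ then yields the desired surjection $\phi$.

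Combining the two steps, $\phi$ is a surjection from a polynomial algebra of Krull dimension $l$ onto a ring of Krull dimension $l$; since $\CC[X_1,\ldots,X_l]$ is a domain, $\ker\phi$ must have height zero, and so $\ker\phi=(0)$. Hence $\phi$ is an isomorphism, as required. The main obstacle is the second step: the shift matrix $\sigma$ enters the Brundan--Kleshchev relations nontrivially, and careful book-keeping is required to verify that the collapse $\mathcal{D}_{i+1}(u)=\mathcal{D}_i(u\pm s)$ combined with the level-$l$ truncation really produces exactly $l=p_n$ nonzero coefficients in $\bar{\mathcal{D}}(u)$, and that no additional Serre-type relations from [\cite{BrK}] introduce further constraints in the abelianization.
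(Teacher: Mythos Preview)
Your overall strategy matches the paper's exactly: use the Brundan--Kleshchev presentation to produce a surjection $\CC[X_1,\ldots,X_l]\twoheadrightarrow U(\g,e)^{\rm ab}$, then invoke Theorem~\ref{E-C} (via the Richardson orbit ${\rm Ind}_{\g(\lambda')}^{\g}\{0\}$ with $\z(\g(\lambda'))$ of dimension $l$) to force injectivity. The final Krull-dimension argument is fine.

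The gap is in your execution of step~2. From $[E_i^{(r)},F_i^{(s)}]=\sum_t \widetilde{D}_i^{(r+s-1-t)}D_{i+1}^{(t)}$ you \emph{cannot} conclude $\mathcal{D}_i(u)^{-1}\mathcal{D}_{i+1}(u)=1$ in the abelianization, because the generators $E_i^{(r)}$ exist only for $r>p_{i+1}-p_i$ (and $F_i^{(s)}$ for $s\ge 1$). Thus the relations you get in $U(\g,e)^{\rm ab}$ are only
\[
\sum_{t=0}^{m}\tilde d_i^{(t)}d_{i+1}^{(m-t)}=0\qquad\text{for }m>p_{i+1}-p_i,
\]
i.e.\ the coefficients of $u^{-m}$ in $\bar{\mathcal D}_i(u)^{-1}\bar{\mathcal D}_{i+1}(u)$ vanish only for $m>p_{i+1}-p_i$. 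The series $\bar{\mathcal D}_1,\ldots,\bar{\mathcal D}_n$ therefore do \emph{not} collapse to a single series (and no argument shift $u\mapsto u\pm s$ is involved). What these relations do give, after an easy induction (using $\tilde d_i^{(0)}=-1$), is that each $d_{j}^{(r)}$ with $r>p_j-p_{j-1}$ lies in the subalgebra generated by the $d_i^{(k)}$ with $1\le k\le p_i-p_{i-1}$, $1\le i\le n$ (set $p_0=0$). Counting, this is $\sum_{i=1}^n(p_i-p_{i-1})=p_n=l$ generators, and you recover exactly the surjection you want. This is precisely the argument the paper carries out. Your suspicion that ``careful book-keeping'' with the shift matrix is the obstacle was well-founded; once you track the index constraints correctly, the argument goes through without difficulty.
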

\begin{proof}
If $n=1$, then $e$ is regular and $l=N$. Hence $U(\g,e)\cong
Z(\g)\cong \CC[X_1,\ldots, X_l]$. So assume from now that $n\ge 2$
and denote by $d_i^{(r)}$, $e_i^{(r)}$, $f_i^{(r)}$ the images of
$D_i^{(r)}$, $E_i^{(r)}$, $F_i^{(r)}$ in $U(\g,e)^{\rm ab}$.
Applying [\cite{BrK}, (2.6) and (2.7)] with $r=1$ we see that
$e_i^{(s)}=f_i^{(s)}=0$ for all $1\le i\le n-1$ and $s\ge 1$. By
[\cite{BrK}, (2.4)], the elements $D_i^{(r)}$ and $D_j^{(s)}$
commute for all $i,j\le n$ and all $r, s$.

As in [\cite{BrK}], we set $D_i^{(0)}:=1$ and $D_i(u):=\sum_{r\ge
0}\,D_i^{(r)}u^{-r}$, an element of $Y_{n,\,l}(\sigma)[u^{-1}]$, and
define $\widetilde{D}_i^{(r)}$ from the equation $
\widetilde{D}_i(u)\,=\,\textstyle{\sum}_{r\ge
0}\,\widetilde{D}_i^{(r)}u^{-r}\,:=\,-D_i(u)^{-1}.$ Since
$$
D_i(u)^{-1}\,=\,\Big(1+\textstyle{\sum}_{r\ge
1}\,D_i^{(r)}u^{-r}\Big)^{-1} \,=\,1+\textstyle{\sum}_{k\ge
1}\,(-1)^k\Big(\textstyle{\sum}_{r\ge 1} \,D_i^{(r)}u^{-r}\Big)^k,
$$
it is easy to see that $\widetilde{D}_i^{(r)}-D_i^{(r)}$ is a
polynomial in $D_i^{(1)},\ldots, D_i^{(r-1)}$ with initial form of
degree $\ge 2$. In particular, $\widetilde{D}_i^{(0)}=-1$,
$\widetilde{D}_i^{(1)}=D_i^{(1)}$ and
$\widetilde{D}_i^{(2)}=D_i^{(2)}-D_i^{(1)}D_i^{(1)}$. Let
$\tilde{d}_i^{(r)}$ denote the image of $\widetilde{D}_i^{(r)}$ in
$U(\g,e)$. Since $[e_j^{(r)},f_j^{(r)}]=0$, applying [\cite{BrK},
(2.5)] yields
\begin{equation}\label{dd}
\textstyle{\sum}_{t=0}^{r}\,\tilde{d}_j^{(t)}d_{j+1}^{(r-t)}=0\qquad\qquad\quad(1\le
j \le n-1,\ r> p_{i+1}-p_i).
\end{equation}

Set $p_0:=0$ and denote by ${\mathcal A}'$ the subalgebra of
$U(\g,e)^{\rm ab}$ generated by all $d_j^{(k)}$ with $1\le k\le
p_j-p_{j-1}$. We claim that $d_j^{(k)}\in{\mathcal A}'$ for all
$(j,k)$ with $1\le j\le n$ and $k\ge 0$. The claim is certainly true
when $j+k=2$. Suppose $d_j^{(k)}\in{\mathcal A}'$ for all $(j,k)$
with $j+k\le d$ and let $(i,r)$ be such that $D_i^{(r)}\ne 0$ and
$i+r=d+1$. If $r\le p_i-p_{i-1}$, then $d_i^{(r)}\in {\mathcal A}'$
by the definition of ${\mathcal A}'$. If $r>p_i-p_{i-1}$, then $i\ge
2$, for otherwise $D_i^{(r)}=0$. Applying (\ref{dd}) with $j=i-1$ we
obtain
$$d_{i}^{(r)}\in\,\CC\big[\tilde{d}_{i-1}^{(1)},\ldots,\tilde{d}_{i-1}^{(r)},\,d_{i}^{(1)},\ldots,
d_{i}^{(r-1)}\big].$$ Since $d_{i}^{(1)},\ldots,
d_{i}^{(r-1)}\in\,{\mathcal A}'$ by our induction assumption and
$\tilde{d}_{i-1}^{(m)}-d_{i-1}^{(m)}$ is a polynomial in
$d_{i-1}^{(1)}\,\ldots, d_{i-1}^{(m-1)}$, the claim follows by
induction on $d$. Since $d_i^{(0)}=1$, we thus deduce that the
algebra $U(\g,e)^{\rm ab}$ is generated by
$p_1+(p_2-p_1)+\cdots+(p_n-p_{n-1})=p_n=l$ elements.

As a result, there is a surjective algebra map
$\gamma\colon\CC[X_1,\ldots,X_l]\rightarrow\,U(\g,e)^{\rm ab}$. If
$\gamma$ is not injective, then the morphism induced by $\gamma$
identifies $\E(\CC)={\rm Specm}\,U(\g,e)^{\rm ab}$ with a {\it
proper} Zariski closed subset of ${\mathbb A}_{\CC}^l$. Then
$\dim\,\E(\CC)<l$. On the other hand, [\cite{Kraft}, Satz~2.2] says
that $e$ is Richardson in a parabolic subalgebra $\p=\l\oplus\u$ of
$\g=\mathfrak{gl}(N)$ with $\l\cong\g(\lambda')$, where $\lambda'$
is the partition of $N$ conjugate to $\lambda$. In other words,
$(\Ad\,\mathrm {GL}(n))\cdot e={\rm Ind}_{\g(\lambda')}^\g\,\{0\}$.
As $\lambda'$ has $l$ parts, Theorem~\ref{E-C} then yields
$\dim\,\E(\CC)\ge \dim\,\z(\g(\lambda'))=l$. This contradiction
shows that $U(\g,e)^{\rm ab}\,\cong\,\CC[X_1,\ldots,X_l]$.
\end{proof}
\begin{question}
{\it Is it true that for any simple Lie algebra $\g$ and any
nilpotent element $e\in\g$ the algebra $U(\g,e)^{\rm ab}$ has no
nonzero nilpotent elements?}
\end{question}
\subsection{}
Recall from [\cite{P07}, p.~524] that the centre $Z(\g)$ of the
universal enveloping algebra $U(\g)$ can be identified with the
centre of $U(\g,e)$ (this holds for any simple Lie algebra $\g$ and
any nilpotent element $e\in\g$). In [\cite{P02}, Rem.~2], the author
asked whether it is true that the centre of any factor-algebra
$\mathcal A$ of $U(\g,e)$ coincides with the image of $Z(\g)$ in
$\mathcal A$. The aim of this subsection is to show that the answer
to this question is negative already for ${\mathcal A}=U(\g,e)^{\rm
ab}$ and $\g=\mathfrak{gl}(4)$. We keep the notation introduced in
(\ref{5.8}).

The centre of $U(\g,e)$ was determined in [\cite{BrK2}] and
[\cite{BrB}]. Let $Z_1,\ldots, Z_N$ be the central elements  of
$U(\g,e)$ introduced in [\cite{BrB}, Sect.~3] and denote by
$z_1,\ldots, z_N$ their images in $U(\g,e)^{\rm ab}$. Set
$Z_0=z_0=1$ and define $Z(u):=\sum_{i=0}^N\,Z_iu^{N-i}$ and
$z(u):=\sum_{i=0}^N\,z_iu^{N-i}$, elements of $U(\g,e)[u]$ and
$U(\g,e)^{\rm ab}[u]$, respectively. From the explicit presentation
of $Z(u)$ given in [\cite{BrB}, Sect.~3] it follows that $z(u)$
equals the determinant of the diagonal matrix
$${\rm diag}\Big(u^{p_1}d_1(u),(u-1)^{p_2}d_2(u-1),\cdots,
(u-n+1)^{p_n}d_n(u-n+1)\Big).$$

Now suppose $N=4$ and $\lambda=(2,2)$. Then $n=2$ and $p_1=p_2=2$.
Combining [\cite{BrK2}, Thm.~3.5] with the equalities
$f_1^{(r)}=e_1^{(r)}=0,\ r\ge 1$ and $d_1^{(r)}=0,\,r>2$, it is not
hard to observe that $d_2^{(r)}=0$ for all $r> 2$. This implies that
$$z(u)\,=\,\big(u^2+d_1^{(1)}u+d_1^{(2)}\big)\big((u-1)^2+d_2^{(1)}(u-1)+d_2^{(2)}\big).$$
It was mentioned in (\ref{5.8}) that $\tilde{d}_i^{(1)}=d_i^{(1)}$
and $\tilde{d}_i^{(2)}=d_i^{(2)}-d_i^{(1)}d_i^{(1)}$ for $i=1,2$.
The proof of Theorem~{\ref{gln} shows that $U(\g,e)^{\rm
ab}\,=\,\CC[d_1^{(1)},d_1^{(2)}]$, whilst from (\ref{dd}) we get
$d_1^{(1)}+\tilde{d}_2^{(1)}=0$ and
$\tilde{d}_1^{(2)}+\tilde{d}_1^{(1)}d_2^{(1)}+d_2^{(2)}=0.$ This
yields $d_2^{(1)}=\tilde{d}_2^{(1)}=-d_1^{(1)}$ and $d_2^{(2)}
=-d_1^{(2)}.$ Setting $X:=d_1^{(1)}$ and $Y:=d_1^{(2)}$ we obtain
\begin{eqnarray*}
z(u)&=&\big(u^2+Xu+Y\big)\big((u-1)^2-X(u-1)-Y\big)\\
&=&\big(u^2+Xu+Y\big)\big((u^2-(X+2)u+(X-Y+1)\big)\\
&=&u^4-2u^3-\big(X^2+X-1\big)u^2\\
&+&\big(X^2-2XY-2Y+X\big)u +\big(XY-Y^2+Y\big).
\end{eqnarray*}

According to [\cite{BrB}], the image of $Z(\g)$ in $U(\g,e)$ is
generated by $Z_1,Z_2,Z_3, Z_4$. Suppose for a contradiction that
${\mathcal A}=U(\g,e)^{\rm ab}$ coincides with the image of $Z(\g)$
in $\mathcal A$. As $X^2-2XY-2Y+X=(X^2+X)-2Y(X+1)$, we then have the
equality
$$\CC[X,Y]\,=\,{\mathcal A}\,=\,\CC[z_1,z_2,z_3,z_4]\,=\,\CC[X^2+X, Y(X+1), Y(X-Y+1)].$$
It follows that $\CC[X,Y]/(Y)$ is generated by the image of $X(X+1)$
in $\CC[X,Y]/(Y)$. Since $\CC[X,Y]/(Y)\,\cong\,\CC[X]$, this is
impossible, however. This shows that the image of $Z(\g)$ in
$U(\g,e)^{\rm ab}$ is a proper subalgebra of $U(\g,e)^{\rm ab}$.
\section{\bf Generalised Whittaker models for primitive ideals}
\subsection{}\label{2.1}
We denote by $L(\lambda)$ the irreducible $\g$-module of highest
weight $\lambda\in\h^*$. Recall that $L(\lambda)$ is the simple
quotient of the Verma module
$M(\lambda):=U(\g)\otimes_{U(\h\,\oplus\,\n_+)\,}\CC
\tilde{v}_\lambda$, where $\CC \tilde{v}_\lambda$ is a
$1$-dimensional $(\h \oplus\n_+)$-module with $h\cdot
\tilde{v}_\lambda=\lambda(h)\tilde{v}_\mu$ for all $h\in\h$. Given a
primitive ideal $P$ of $U(\g)$ we write $\mathcal{VA}(P)$ for the
associated variety of $P$. The affine variety
$\mathcal{VA}(P)\subset\g^*$ is the zero locus of the $(\Ad
G)$-invariant ideal $\gr\, P$ of $S(\g)=\gr\, U(\g)$. By the
Irreducibility Theorem, $\mathcal{VA}(P)$ coincides with the Zariski
closure of a coadjoint nilpotent orbit in $\g^*$. By Duflo's
Theorem, $P={\rm Ann}_{U(\g)}\,L(\lambda)$ for some
$\lambda\in\h^*$. In general, such a $\lambda$ is not unique, but if
${\rm Ann}_{U(\g)}\,L(\lambda)\,=\,{\rm Ann}_{U(\g)}\,L(\lambda')$
then $\lambda'+\rho=w(\lambda+\rho)$ for some $w\in W$ (here $W=\la
s_\alpha\,|\,\alpha\in\Phi\ra$ is the Weyl group of $\g$ and
$\rho=\frac{1}{2}\sum_{\alpha\in\Phi^+}\alpha$ is the half-sum of
positive roots).

By [\cite{P07}, Thm.~3.1(ii)], if $P={\rm
Ann}_{U(\g)}\big(Q_\chi\otimes_{U(\g,\,e)}V\big)$ for some finite
dimensional irreducible $U(\g,e)$-module $V$, then
$\mathcal{VA}({\mathcal J})=\overline{\mathcal O}_\chi$, where
$\chi=(e,\,\cdot\,)$. A few years ago the author conjectured that
the converse should also be true; that is, for every primitive ideal
$P$ of $U(\g)$ with $\mathcal{VA}(P)=\overline{\mathcal O}_\chi$
there should exist a finite dimensional irreducible $U(\g,e)$-module
$V$ such that $P={\rm
Ann}_{U(\g)}\big(Q_\chi\otimes_{U(\g,\,e)}V\big)$; see [\cite{P07},
Conjecture~3.2]. In [\cite{P07'}], this conjecture was proved under
the assumption that the infinitesimal character of $P={\rm
Ann}_{U(\g)}\,L(\lambda)$ is rational, i.e.
$\la\lambda,\alpha^\vee\ra\in\mathbb Q$ for all $\alpha\in\Pi$. In
proving [\cite{P07}, Conjecture~3.2] under this assumption the
author relied almost entirely on characteristic $p$ methods.

In the meantime, two different proofs of the author's conjecture
have appeared in the literature. The first proof, based on
equivariant Fedosov quantisation, was obtained by Losev; see
[\cite{Lo}, Thm.~1.1.2(viii)]. The second proof, relying on
Harish-Chandra bimodules for quantised Slodowy slices, was recently
found by Ginzburg; see [\cite{Gi}, Thm.~4.5.1].

The main goal of this section is to revisit the proof of
[\cite{P07'}, Thm.~1.1] and make a few amendments sufficient for
confirming [\cite{P07}, Conjecture~3.2] in full generality (this was
announced [\cite{P07'}, p.~745]).
\subsection{}\label{2.2}
Given a Lie algebra $\mathcal L$ over a commutative ring $A$, which
is free as an $A$-module, we denote by $U_n({\mathcal L})$ the $n$th
component of the canonical filtration of the universal enveloping
algebra $U({\mathcal L})$. By the PBW theorem, the corresponding
graded algebra $\gr\,U({\mathcal L})$ is isomorphic to the symmetric
algebra $S({\mathcal L})$ of the free $A$-module $\mathcal L$. Given
a commutative Noetherian ring $R$ we write $\dim\,R$ for the Krull
dimension of $R$.

Let ${\mathcal I}\,=\,{\rm Ann}_{U(\g)}\,L(\mu)$ be a primitive
ideal of $U(\g)$ with $\mathcal{VA(I)}=\overline{\O}_\chi$. From now
on we shall always assume that our admissible ring $A$ contains all
elements $\la\mu,\alpha^\vee\ra$ with $\alpha\in\Pi$. In this case,
$M_A(\mu):=U(\n^{-}_A)\tilde{v}_\mu$ is a $\g_A$-stable $A$-lattice
in the Verma module $M(\mu)$ (here $\n^{-}_A$ stands for the
$A$-span of the $e_\gamma$ with $\gamma\in\n_-$).

Denote by $M^{\rm max}(\mu)$ the unique maximal submodule of
$M(\mu)$, so that $L(\mu)=M(\mu)/M^{\rm max}(\mu)$, and let $v_\mu$
be the image of $\tilde{v}_\mu$ under the canonical homomorphism
$M(\mu)\twoheadrightarrow L(\mu)$. Put $M_A^{\rm max}(\mu):=M^{\rm
max}(\mu)\cap M_A(\mu)$ and define
$$L_A(\mu)\,:=\,M_A(\mu)/M_A^{\rm max}(\mu).$$
Since $M_A(\mu)$ is a Noetherian $U(\g_A)$-module, so are $M^{\rm
max}(\mu)$ and $L_A(\mu)$. For $n\in\Z_+$, put
$L_n(\mu):=U_n(\g)v_\mu=U_n(\n^-)v_\mu$ and
$L_{A,n}(\mu):=U_n(\g_A)v_\mu=U_n(\n_A^-)v_\mu$, and let
$$\gr\,L(\mu)=\,\textstyle{\bigoplus_{n\ge
0}}\,\,L_n(\mu)/L_{n-1}(\mu)\quad\mbox{and}\quad
\gr\,L_A(\mu)=\,\textstyle{\bigoplus_{n\ge
0}}\,\,L_{A,n}(\mu)/L_{A,n-1}(\mu)$$ (here $L_{-1}(\mu)
=L_{A,-1}(\mu)=0$). Note that $\gr\,L(\mu)$ and $\gr\,L_A(\mu)$ are
generated by $v_\mu=\gr_0\,v_\mu$ as modules over $S(\g)=\gr\,U(\g)$
and $S(\g_A)=\gr\, U(\g_A)$, respectively.

We now define
$${\mathcal J}:={\rm Ann}_{S(\g)}\,\gr\,L(\mu)\,=\,{\rm
Ann}_{S(\g)}\,v_\mu\quad \mbox{and}\quad {\mathcal J}_A:={\rm
Ann}_{S(\g_A)}\,\gr\,L(\mu)\,=\,{\rm Ann}_{S(\g_A)}\,v_\mu.$$ These
are graded ideals of $S(\g)$ and $S(\g_A)$, respectively. Put
$${\mathcal R}:=S(\g)/{\mathcal J}\quad \mbox{and}\quad  {\mathcal
R}_A:=S(\g_A)/{\mathcal J}_A.$$ The zero locus of the ideal
${\mathcal J}\subset S(\g)$ in $\g^*$ is called the {\it associated
variety} of $L(\mu)$ and denoted by ${\mathcal V}_\g L(\mu)$. By a
result of Gabber, all irreducible components of the variety
${\mathcal V}_\g L(\mu)$ have dimension $d(e)$; see [\cite{P07'},
2.2] for more detail. In particular, $\dim\,{\mathcal R}=d(e)$.
Since ${\mathcal R}=\bigoplus_{n\ge 0}\,{\mathcal R}(n)$, where
${\mathcal R}(n)\cong L_n(\mu)/L_{n-1}(\mu)$, is a graded Noetherian
algebra with ${\mathcal R}(0)=\CC$, we have that
$d(e)=\dim\,{\mathcal R}=1+\deg\,P_{\mathcal R}(t)$, where
$P_{\mathcal R}(t)$ is the Hilbert polynomial of ${\mathcal R}$; see
[\cite{Eis}, Corollary~13.7].

First we note that ${\mathcal R}_A=\bigoplus_{n\ge 0}{\mathcal
R}_A(n)$ is a finitely generated graded $A$-algebra and all
${\mathcal R}_A(n)\cong L_{A,n}(\mu)/L_{A,n-1}(\mu)$ are finitely
generated $A$-modules. Also, $A\subset\CC$ is a Noetherian domain.
If $0\ne b\in A$, then standard properties of localisation
[\cite{Bo}, Ch.~II, 2.4] yield that ${\mathcal
J}_{A[b^{-1}]}={\mathcal J}\otimes_A A[b^{-1}]$ and
$${\mathcal R}_{A[b^{-1}]}=S(\g_{A[b^{-1}]})/{\mathcal J}_{A[b^{-1}]}\cong
\big(S(\g_A)\otimes_A A[b^{-1}]\big)/\big({\mathcal
J}_A\otimes_AA[b^{-1}]\big)\cong{\mathcal R}_A\otimes_A A[b^{-1}].$$

Denote by $F$ the quotient field of $A$. Since ${\mathcal
R}_F:={\mathcal R}_A\otimes_A F$ is a finitely generated algebra
over a field, the Noether Normalisation Theorem says that there
exist homogeneous, algebraically independent $y_1,\ldots,
y_d\in{\mathcal R}_F$, such that ${\mathcal R}_F$ is a finitely
generated module over its graded polynomial subalgebra
$F[y_1,\ldots, y_d]$; see [\cite{Eis}, Thm.~13.3]. Let $v_1,\ldots,
v_D$ be a generating set of the $F[y_1,\ldots, y_d]$-module
${\mathcal R}_F$ and let $x_1,\ldots, x_{m'}$ be a generating set of
the $A$-algebra ${\mathcal R}$. Then
\begin{eqnarray*}
v_i\cdot v_j&=&\textstyle{\sum}_{k=1}^D \,p_{i,j}^k(y_1,\ldots,
y_d)v_k\ \ \qquad(1\le
i,j\le D)\\
x_i&=&\textstyle{\sum}_{j=1}^{D}\,q_{i,j}(y_1,\ldots, y_d) v_j\ \
\,\qquad(1\le i\le m')
\end{eqnarray*} for some polynomials $p_{i,j}^k,\, q_{i,j}\in
F[X_1,\ldots, X_d].$ The algebra ${\mathcal R}_A$ contains an
$F$-basis of ${\mathcal R}_F$. The coordinate vectors of the
$x_i$'s, $y_i$'s and $v_i$'s relative to this basis and the
coefficients of the polynomials $q_{i,j}$ and $p_{i,j}^k$ involve
only finitely many scalars in $\mathbb Q$. Replacing $A$ by
$A[b^{-1}]$ for a suitable $0\ne b\in A$ if necessary, we may assume
that all $y_i$ and $v_i$ are in ${\mathcal R}_A$ and all $p_{i,j}^k$
and $q_{i,j}$ are in $A[X_1,\ldots,X_d]$. In conjunction with our
earlier remarks this shows that no generality will be lost by
assuming that
\begin{equation}\label{f-gen}{\mathcal R}_A\,=\,A[y_1,\ldots,
y_d]v_1+\cdots+A[y_1,\ldots,y_d]v_D
\end{equation} is a finitely generated module over the polynomial
algebra $A[y_1,\ldots, y_d]$. We may assume without loss that $D!$
is invertible in $A$.

\begin{lemma}\label{free}
There exists an admissible ring $A\subset \CC$ such that each graded
component ${\mathcal R}_A(n)$ of ${\mathcal R}_A$ is a free
$A$-module of finite rank.
\end{lemma}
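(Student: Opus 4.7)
My plan is to use Grothendieck's generic freeness lemma in combination with the graded $A[y_1,\ldots,y_d]$-module structure coming from (\ref{f-gen}), and then to enlarge $A$ so that the resulting projective modules become free. Set $B := A[y_1,\ldots,y_d]$, equipped with the positive grading in which each generator $v_j$ is homogeneous of degree $\delta_j$. Every graded component $B_k$ is a free $A$-module of finite rank (with basis the monomials in the $y_i$ of total degree $k$), so the surjection $\bigoplus_j B_{n-\delta_j}\,v_j\twoheadrightarrow {\mathcal R}_A(n)$ coming from (\ref{f-gen}) exhibits each ${\mathcal R}_A(n)$ as a finitely generated $A$-module. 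Applying Grothendieck's generic freeness lemma to the finitely generated $B$-module ${\mathcal R}_A$, viewed as an $A$-module, I obtain a single nonzero $b\in A$ such that ${\mathcal R}_A[b^{-1}]$ is flat over $A[b^{-1}]$. The grading realizes ${\mathcal R}_A[b^{-1}]$ as an $A[b^{-1}]$-module direct sum of its graded pieces, so each ${\mathcal R}_A[b^{-1}](n)$ is a direct summand of this flat module; being also finitely generated over the Noetherian ring $A[b^{-1}]$, it is therefore a finitely generated projective $A[b^{-1}]$-module of generic rank $\dim_F {\mathcal R}_F(n)$.

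The principal obstacle is the upgrade from \emph{projective} to \emph{free}. I will address this by enlarging $A$ further so that every finitely generated projective $A$-module is free --- for instance, by invoking the Quillen--Suslin theorem after arranging $A$ to sit inside a suitably localized polynomial ring over $\Z[1/N]$, with admissibility preserved by inverting only finitely many additional elements. Combined with the preceding step this gives the freeness of every ${\mathcal R}_A(n)$. The essential feature enabling uniformity across all degrees $n$ is that a single application of generic freeness to ${\mathcal R}_A$, which is finitely generated over the Noetherian algebra $B$, produces one $b\in A$ working for every $n$ at once; without the $B$-module structure one would have no control over the infinitely many graded pieces ${\mathcal R}_A(n)$ and would be forced into an $n$-by-$n$ localization that need not collapse to a single admissible ring.
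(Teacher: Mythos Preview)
Your reduction to projectivity is fine, but the passage from projective to free has a genuine gap. Quillen--Suslin applies to polynomial rings over a PID, and an admissible ring $A$ need not be of this form: by construction $A$ must contain the values $\langle\mu,\alpha^\vee\rangle$, which may be algebraic irrationals, so $A$ can involve rings of integers with nontrivial class group. Saying that $A$ ``sits inside'' a localized polynomial ring over $\Z[1/N]$ does not help, since projectivity and freeness are not inherited along ring embeddings. Moreover, you have infinitely many graded pieces ${\mathcal R}_A(n)$, so even if each individual projective could be freed by inverting one element, there is no a~priori reason these localizations collapse to a single admissible ring.

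The paper avoids this difficulty by invoking a \emph{graded} form of generic freeness (the proof of Eisenbud's Theorem~14.4 carried out in the graded category): since ${\mathcal R}_A$ is a finitely generated graded module over the graded polynomial ring $A[y_1,\ldots,y_d]$ with homogeneous $y_i$, the d\'evissage in the generic freeness argument can be taken through graded submodules, and the Noether-normalization step yields, after inverting a single $a\in A$, a \emph{homogeneous} free basis of ${\mathcal R}_A[a^{-1}]$ over $A[a^{-1}][y_1,\ldots,y_d]$. Each ${\mathcal R}_{A[a^{-1}]}(n)$ is then a finite direct sum of graded pieces of the polynomial ring, hence free of finite rank. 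In short, the uniformity over $n$ that you correctly isolate as the crux is obtained not by a post-hoc appeal to Quillen--Suslin, but by running the generic freeness argument itself in a way that respects the grading.
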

\begin{proof}
Since ${\mathcal R}_A$ is a finitely generated $A[y_1,\ldots,
y_d]$-module and $A$ is a Noetherian domain, a graded version of the
Generic Freeness Lemma shows that there exists a nonzero $a\in A$
such that each  $\big({\mathcal R}_A(n)\big)[a^{-1}]$  is a free
$A[a^{-1}]$-module of finite rank; see (the proof of) Theorem~14.4
in [\cite{Eis}]. Since it follows from [\cite{Bo}, Ch.~II, 2.4] that
$\big({\mathcal R}_A(n)\big)[a^{-1}]\cong {\mathcal
R}_{A[a^{-1}]}(n)$ for all $n\in\Z_+$, the result follows.
\end{proof}
\subsection{}\label{2.4}
Denote by $L_F(\mu)$ the highest weight module $L_A(\mu)\otimes_A F$
over the split Lie algebra $\g_F$, where $F=\mathrm{Quot}(A)$. Since
$L(\mu)\cong L_F(\mu)\otimes_F\CC$, each subspace ${\mathcal I}\cap
U_n(\g)$ is defined over $F$. It follows that the graded ideal
$$\gr\,{\mathcal I}\,=\,\textstyle{\bigoplus}_{n\ge
0}\,\big({\mathcal I}\cap U_n(\g)\big)/\big({\mathcal I}\cap
U_{n-1}(\g)\big)\subset S(\g)$$ is defined over $F$ as well. Hence,
for every $n\in\mathbb Z_+$ the $F$-subspace
$S^n(\g_F)\cap\gr\,{\mathcal I}$ is an $F$-form of the graded
component $\gr_n\,{\mathcal I}\subset S^n(\g)$. Since $S(\g)$ is
Noetherian, the ideal $\gr\,\mathcal I$ is generated by its
$F$-subspace $\gr\,{\mathcal I}_{F,n'}:=\gr\,{\mathcal
I}\cap\bigoplus_{k\le n'}S^k(\g_F)$ for some $n'=n'(\mu)\in\Z_+$.
From this it follows that $\mathcal I$ is generated over $U(\g)$ by
its $F$-subspace ${\mathcal I}_{F,n'}:=\,U_{n'}(\g_F)\cap\mathcal
I$. Since $\mathcal I$ is a two-sided ideal of $U(\g)$, all
subspaces ${\mathcal I}\cap U_n(\g)$ and $\gr_n\,\mathcal I$ are
invariant under the adjoint action of $G$ on $U(\g)$. It follows
that the $F$-subspaces $\gr\,{\mathcal I}_{F,n'}$ and ${\mathcal
I}_{F,n'}$ are invariant under the adjoint action of the
distribution algebra $U_F:=U_\Z\otimes_\Z F$. Since
$\h_K:=\h\cap\g_F$ is a split Cartan subalgebra of $\g_F$, the
adjoint $\g_F$-modules $\gr\,{\mathcal I}_{F,n'}$ and ${\mathcal
I}_{F,n'}$ decompose into a direct sum of absolutely irreducible
$\g_F$-modules with integral dominant highest weights. Consequently,
these $\g_F$-modules possess $\Z$-forms invariant under the adjoint
action of the Kostant $\Z$-form $U_{\mathbb Z}$; we call them
$\gr\,{\mathcal I}_{\Z, n'}$ and ${\mathcal I}_{\Z,n'}$.

Let $\{\psi_i\,|\,\,i\in I\}$ be a homogeneous basis of the free
$\Z$-module $\gr\,{\mathcal I}_{\Z, n'}$ and let $\{u_i\,|\,\,i\in
I\}$ be any basis of the free $\Z$-module ${\mathcal I}_{\Z,n'}$.
Expressing the $u_i$ and $\psi_i$ via the PBW bases of $U(\g_F)$ and
$S(\g_F)$ associated with our Chevalley basis $\mathcal B$ involves
only finitely many scalars in $F$. Thus, no generality will be lost
by assuming that all $\psi_i$ are in $S(\g_A)$ and all $u_i$ are in
$U(\g_A)$.

Let $K$ be an algebraically closed field whose characteristic is a
good prime for the root system $\Phi$. Let
$\g_K=\g_{\Z}\otimes_{\Z}K$ and let $G_K$ be the simple, simply
connected algebraic $K$-group with hyperalgebra
$U_K:=U_{\Z}\otimes_{\Z}K$. Let ${\mathcal N}(\g)$ and ${\mathcal
N}(\g_K)$ denote the nilpotent cones of $\g$ and $\g_K$,
respectively. As explained in [\cite{P03}] and [\cite{P07'}, 2.5],
there are nilpotent elements $e_1,\ldots, e_t\in\g_{\Z}$ such that
\begin{itemize}
\item[(i)\,] $\{e_1,\ldots, e_t\}$ is a set of representatives for
$\,{\mathcal N}(\g)/G$;

\smallskip

\item[(ii)\,] $\{e_1\otimes 1,\ldots, e_t\otimes 1\}$ is a set of
representatives for $\,{\mathcal N}(\g_K)/G_K$;

\smallskip

\item[(iii)\,] $\,\dim_{\mathbb C}\,(\Ad G)e_i\,=\,\dim_K\,(\Ad
G_K)(e_i\otimes 1)\, $ for all $\,i\le t$.
\end{itemize}
For $1\le i\le t$ set $\chi_i:=(e_i,\,\cdot\,).$ As in
[\cite{P07'}], we assume that $e=e_k$ for some $k\le t$ and
${\mathcal O}(e_i)\subset \overline{{\mathcal O}(e)}$ for $i\le k$.
Since $\mathcal{VA}({\mathcal I})$ is the zero locus of $\gr\,
{\mathcal I}$ and $\gr\,\mathcal I$ is generated by the set
$\{\psi_i\,|\,\,i\in I\}$, we have that $\overline{{\mathcal
O}(\chi)}=\bigcap_{i\in I}\,V(\psi_i)$. It follows that the $\psi_i$
vanish on all $\chi_j$ with $j\le k$. Since all $\psi_i$ are in
$S(\g_A)$, all $e_j$ are in $\g_{\Z}$, and the form
$(\,\cdot\,,\,\cdot\,)$ is $A$-valued, we also have that
$\psi_i(\chi_j)\in A$. Localising further if necessary we may assume
that all nonzero $\psi_i(\chi_j)$ are invertible in $A$.
\subsection{}\label{3.1} Now suppose that $A$ satisfies all the conditions mentioned above.
Take $p\in\pi(A)$ and let $\nu\colon\,A\to {\mathbb F}_p$ be the
algebra homomorphism with kernel ${\mathfrak P}\in{\rm Specm}\,A$.
Write $\k$ for the algebraic closure of ${\mathbb F}_p$ and set
$L_\P(\mu):=L_A(\mu)\otimes_{A}\k$, where it is assumed that $A$
acts on $\k$ via $\nu$. Clearly, $L_\P(\mu)$ is a module over the
Lie algebra $\g_\k=\n^{-}_\k\oplus\h_\k\oplus\n^{+}_\k$, where
$\n^{\pm}_\k:=\n^{\pm}\otimes_A\k$ and $\h_\k:=\h_A\otimes_A\k$.
Furthermore, $\bar{v}_\mu:=v_\mu\otimes 1\in L_\P(\mu)$ is a highest
weight vector for the Borel subalgebra $\h_\k\oplus\n^+_\k$ of
$\g_\k$, and $L_\P(\mu)= U(\n^-_\k)\cdot \bar{v}_\mu$. Denote by
$\bar{\mu}$ the $\h_\k$-weight of $\bar{v}_\mu$. Since
$\mu(h_\alpha)\in A$ for all $\alpha\in \Pi$ and $\nu(a)\in{\mathbb
F}_p$ for all $a\in A$, we have that
$\bar{\mu}(\bar{h}_\alpha)\in{\mathbb F}_p$ for all $\alpha\in \Pi$.

Recall the notation and conventions of Section~2. Similar to
[\cite{P07'}, 3.1], we now set $I_\P(\mu):=\{z\in Z_p\,|\,\,z\cdot
\bar{v}_\mu=0\}$, an ideal of the $p$-centre $Z_p$ of $U(\g_\k)$,
and denote by $V_\P(\mu)$ the zero locus of $I_\P(\mu)$ in
$\g_\k^*$. It is immediate from the preceding remark that
$\bar{e}_\gamma^p\in I_\P(\mu)$ for all $\gamma\in\Phi^+$ and
${\bar{h}^p_\alpha}-\bar{h}_\alpha\in I_\P(\mu)$ for all
$\alpha\in\Pi$. Consequently,
\begin{eqnarray}\label{support} V_\P(\mu)\,\subseteq\,
\{\eta\in\g_\k^*\,|\,\,\,\eta(\h_\k)=\eta(\n^+_\k)=0\}.
\end{eqnarray}
As the $U(\g_\k)$-module $L_\P(\mu)$ is generated by $\bar{v}_\mu$,
we have that $I_\P(\mu)\,=\,{\rm Ann}_{Z_p}\,L_\P(\mu)$. Given
$\eta\in\g_\k^*$ we set $L_\P^\eta(\mu):=L_\P(\mu)/I_\eta\cdot
L_\P(\mu).$ By construction, $L_\P^\eta(\mu)$ is a $\g_\k$-module
with $p$-character $\eta$. It follows from (\ref{support}) that
every $\xi\in V_\P(\mu)$ has the form $\xi=(x,\cdot\,)$ for some
$x\in\n^+_\k$.
\begin{lemma}\label{le1}
If $\eta\in V_\P(\mu)$, then $L_\P^\eta(\mu)$ is a nonzero
$U_\eta(\g_\k)$-module.
\end{lemma}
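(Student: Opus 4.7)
The plan is to derive the nonvanishing of $L_\P^\eta(\mu)=L_\P(\mu)/I_\eta L_\P(\mu)$ from the geometric meaning of $V_\P(\mu)$ by a standard support/Nakayama argument. The key point is that $V_\P(\mu)$ is, essentially by definition, the support of $L_\P(\mu)$ viewed as a module over the $p$-centre $Z_p$, and once we know that, nonvanishing after specialising at a closed point of $V_\P(\mu)$ is automatic.

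First I would observe that the PBW theorem makes $U(\g_\k)$ a free $Z_p$-module of rank $p^{\dim\g}$, so the cyclic $U(\g_\k)$-module $L_\P(\mu)=U(\g_\k)\bar{v}_\mu$ is finitely generated over $Z_p$, and hence also over $R:=Z_p/I_\P(\mu)$. The paper has already recorded that $I_\P(\mu)=\mathrm{Ann}_{Z_p}\,L_\P(\mu)$, so $L_\P(\mu)$ is a faithful finitely generated $R$-module, whence its support equals all of $\mathrm{Spec}\,R$. Under the standard identification $\mathrm{Specm}\,Z_p\leftrightarrow\g_\k^*$, the maximal spectrum of $R$ is precisely $V_\P(\mu)$; so for any $\eta\in V_\P(\mu)$ the maximal ideal $\mathfrak{m}_\eta:=J_\eta/I_\P(\mu)$ of $R$ lies in the support of $L_\P(\mu)$, and Nakayama's lemma yields $L_\P(\mu)/\mathfrak{m}_\eta L_\P(\mu)\ne 0$.

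To finish, I would identify $\mathfrak{m}_\eta L_\P(\mu)$ with $I_\eta L_\P(\mu)$. Since the generators $x^p-x^{[p]}-\eta(x)^p$ of $I_\eta$ all lie in the central subalgebra $Z_p$, the two-sided ideal $I_\eta$ equals $J_\eta\cdot U(\g_\k)$, so $I_\eta L_\P(\mu)=J_\eta\cdot L_\P(\mu)=\mathfrak{m}_\eta L_\P(\mu)$ (using $I_\P(\mu)\cdot L_\P(\mu)=0$). Consequently $L_\P^\eta(\mu)\ne 0$, and because $I_\eta$ annihilates the quotient, it carries a natural $U_\eta(\g_\k)$-module structure. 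I do not foresee any genuine obstacle; the only point requiring attention is the finite generation of $L_\P(\mu)$ over $Z_p$, which is handed to us for free by the PBW decomposition of $U(\g_\k)$ as a free $Z_p$-module.
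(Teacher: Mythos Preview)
Your argument is correct and is precisely the standard support/Nakayama argument that the paper intends: since $U(\g_\k)$ is a free $Z_p$-module of finite rank, the cyclic module $L_\P(\mu)$ is finitely generated over $Z_p$ with support $V(I_\P(\mu))=V_\P(\mu)$, and Nakayama gives nonvanishing of the fibre at any $\eta$ in the support. The paper simply defers to the proof of Lemma~3.1 in [\cite{P07'}], which is this same computation.
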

\begin{proof}
Replace $L_p(\mu)$ by $L_\P(\mu)$ and $I_p(\mu)$ by $I_\P(\mu)$, and
argue as in the proof of [\cite{P07'}, Lemma~3.1].
\end{proof}

Set $L_{\P,n}(\mu):=U_n(\g_\k)\bar{v}_\mu$ and
$\gr\,L_\P(\mu):=\,\textstyle{\bigoplus_{n\ge
0}}\,\,L_{\P,n}(\mu)/L_{\P, n-1}(\mu),$ where $n\in\Z_+$. Note that
$\gr\,L_\P(\mu)$ is a cyclic $S(\g_\k)$-module generated by
$\bar{v}_\mu=\gr_0\, \bar{v}_\mu$. Also,
$$L_{\P,n}(\mu)=U_n(\g_k)\bar{v}_\mu=\big(U_n(\g_A)v_\mu\big)\otimes_A\k
=L_{A,n}(\mu)\otimes_A\k.$$ We put ${\mathcal J}_\P:={\rm
Ann}_{S(\g_\k)}\,\gr\,L_\P(\mu)={\rm Ann}_{S(\g_\k)}\,\bar{v}_\mu$
and ${\mathcal R}_\P:=S(\g_\k)/{\mathcal J}_\P$, and denote by
${\mathcal V}_\g L_\P(\mu)$ the zero locus of ${\mathcal J}_\P$ in
${\rm Specm}\,S(\g_\k)=\g_\k^*$. Since $\bar{v}_\mu$ is a highest
weight vector for $\h_\k\oplus\n_\k^+$, all linear functions from
${\mathcal V}_\g L_\P(\mu)$ vanish on $\h_\k\oplus\n_\k^+$.

By Lemma~\ref{free}, all graded components ${\mathcal R}_{A,n}\cong
L_{A,n}(\mu)/L_{A,n-1}(\mu)$ of ${\mathcal R}_A$ are free
$A$-modules of finite rank. From this it is immediate that so are
the $A$-modules $L_{A,n}$, and ${\mathcal R}_\P\cong {\mathcal
R}_A\otimes_A\k$ as graded $\k$-algebras. Comparing the Hilbert
polynomials of ${\mathcal R}={\mathcal R}_A\otimes_A\mathbb C$ and
${\mathcal R}_\P\cong {\mathcal R}_A\otimes_A\k$ we see that $\dim
{\mathcal R}_\P\,=\,\dim{\mathcal R}\,=\,d(e);$ see [\cite{Eis},
Corollary~13.7]. As a consequence,
\begin{eqnarray}\label{13}\label{Krull}\dim_\k {\mathcal V}_\g L_\P(\mu)\,=\,
\dim {\mathcal R}_\P=\,d(e).
\end{eqnarray}

Recall from (\ref{f-gen}) the generators $v_1,\ldots, v_D$ of the
$A[y_1,\ldots,y_d]$-module ${\mathcal R}_A$. Since ${\mathcal
R}={\mathcal R}_A\otimes_A\mathbb C$, the above discussion also
shows that $d=\dim\,{\mathcal R}=d(e)$. We stress that $D=D(\mu)$
depends on $\mu$, but not on $\P$.
\begin{lemma}\label{le2}
For every $\eta\in\g_\k^*$ we have that $\,\dim_\k L_\P^\eta(\mu)\le
Dp^{d(e)}$.
\end{lemma}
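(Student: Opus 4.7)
The plan is to filter $L_\P^\eta(\mu)$ by the quotient filtration induced from the canonical PBW filtration on $L_\P(\mu)$ and then bound the dimension of the associated graded by a purely commutative-algebra calculation inside $\mathcal{R}_\P$. First, for any $v\in L_{\P,n}(\mu)$ with symbol $\bar{v}\in\gr_n L_\P(\mu)$ and any $x\in\g_\k$, the element $(x^p-x^{[p]}-\eta(x)^p)v$ lies in $I_\eta L_\P(\mu)$; since $x^{[p]}\in U_1(\g_\k)$ and $\eta(x)^p\in\k$, this element has filtration degree at most $n+p$ with top symbol $\bar{x}^p\bar{v}$. Hence the surjection $\gr L_\P(\mu)\twoheadrightarrow\gr L_\P^\eta(\mu)$ factors through the quotient of $\gr L_\P(\mu)$ by the action of the ideal $\langle \bar{x}^p:x\in\g_\k\rangle\subset S(\g_\k)$. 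Since $\gr L_\P(\mu)$ is cyclic on $\bar{v}_\mu$ with annihilator $\mathcal{J}_\P$, it is canonically identified with $\mathcal{R}_\P$, reducing the problem to bounding $\dim_\k\,\mathcal{R}_\P\big/\langle\bar{x}^p:x\in\g_\k\rangle\cdot\mathcal{R}_\P$ by $D\,p^{d(e)}$.

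The key commutative observation is a Frobenius identity in $S(\g_\k)$: fixing a basis $\bar{g}_1,\ldots,\bar{g}_N$ of $\g_\k$, for any homogeneous $f=\sum_{\mathbf{m}\ne\mathbf{0}}a_\mathbf{m}\bar{g}^\mathbf{m}\in S(\g_\k)$ of positive degree we have $f^p=\sum a_\mathbf{m}^p\bar{g}^{p\mathbf{m}}$ by $p$-linearity of Frobenius in the commutative ring $S(\g_\k)$, and each monomial $\bar{g}^{p\mathbf{m}}$ with $\mathbf{m}\ne\mathbf{0}$ is divisible by some $\bar{g}_i^p$, so $f^p\in\langle \bar{g}_i^p\rangle=\langle\bar{x}^p:x\in\g_\k\rangle$. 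Lifting the homogeneous positive-degree generators $\bar{y}_1,\ldots,\bar{y}_d\in\mathcal{R}_\P$ to homogeneous positive-degree elements of $S(\g_\k)$ and applying this identity shows $\bar{y}_j^p\in\langle\bar{x}^p\rangle\cdot\mathcal{R}_\P$ for each $j$. Consequently $\mathcal{R}_\P\big/\langle\bar{x}^p\rangle\cdot\mathcal{R}_\P$ is a further quotient of $\mathcal{R}_\P\big/\langle\bar{y}_1^p,\ldots,\bar{y}_d^p\rangle\cdot\mathcal{R}_\P$.

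Reducing (\ref{f-gen}) modulo $\P$ (which is legitimate by Lemma~\ref{free} together with the graded identification $\mathcal{R}_\P\cong \mathcal{R}_A\otimes_A\k$ noted just before the statement) yields $\mathcal{R}_\P=\sum_{i=1}^D\k[\bar{y}_1,\ldots,\bar{y}_d]\cdot\bar{v}_i$, so $\mathcal{R}_\P\big/\langle\bar{y}_j^p\rangle\cdot\mathcal{R}_\P$ is generated as a module over the $\k$-algebra $\k[\bar{y}_1,\ldots,\bar{y}_d]\big/\langle\bar{y}_j^p\rangle$ (of $\k$-dimension at most $p^d=p^{d(e)}$) by the $D$ elements $\bar{v}_1,\ldots,\bar{v}_D$. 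Stringing the surjections together gives $\dim_\k L_\P^\eta(\mu)=\dim_\k\gr L_\P^\eta(\mu)\le D\,p^{d(e)}$, as required.

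The real content here is the Frobenius identity, which converts the noncommutative $p$-power relations in $U_\eta(\g_\k)$ into vanishing of $\bar{y}_j^p$ in the commutative quotient; the fact that it produces precisely $p^{d(e)}$ (matching the target bound) is no coincidence, since $d$ was chosen as the Krull dimension $\dim\,\mathcal{R}=d(e)$. The other steps — the PBW symbol calculation and the survival of the Noether-normalisation data after reduction mod $\P$ — are routine consequences of the machinery in (\ref{f-gen}) and Lemma~\ref{free}, and the only minor care needed is to check that one may always choose the lifts of $\bar{y}_j$ to $S(\g_\k)$ to be homogeneous of positive degree (which is immediate because $\mathcal{R}_\P$ is graded and each $\bar{y}_j$ is homogeneous of positive degree by construction).
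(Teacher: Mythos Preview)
Your proof is correct and follows precisely the strategy implicit in the paper's setup (and made explicit in [\cite{P07'}, Lemma~3.2], to which the paper simply refers): pass to the associated graded, observe that the symbols of the $p$-central generators kill $\bar y_j^p$ via the Frobenius identity in $S(\g_\k)$, and then use the Noether-normalisation data (\ref{f-gen}) reduced modulo $\P$ to get the bound $Dp^{d(e)}$. This is exactly the intended argument.
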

\begin{proof}
Repeat verbatim the proof of Lemma~3.2 in [\cite{P07'}].
\end{proof}
\subsection{}\label{3.4} Since $D!$
is invertible in $A$, we have that $p>D$ for all $p\in\pi(A)$. As
before, we identify $\g_\k$ with $\g_\k^*$ by using the
$G_\k$-equivariant map $x\mapsto (x,\cdot\,)$. Then
$V_\P(\mu)\subseteq \n^+_\k$; see (\ref{support}). The $p$-centre
$Z_p(\n_\k^-)=Z_p\cap U(\n_\k^-)$ of $U(\n_\k^-)$ is isomorphic to
the polynomial algebra in $\bar{e}_{\gamma}^p$, where
$\gamma\in\Phi^-$, hence can be identified with the subalgebra
$S(\n_\k^-)^p$ of all $p$-th powers in $S(\n_\k^-)$. Therefore, we
may regard $I_\P(\mu)\cap Z_p(\n_\k^-)$ as an ideal of the graded
polynomial algebra
$S(\n_\k^-)^p=\k[\bar{e}_{\gamma}^p\,|\,\,\gamma\in\Phi^-]$. It
follows from our discussion in (\ref{3.1}) and the above
identifications that
\begin{eqnarray}\label{VIP}
V_\P(\mu)\,=\,V(I_\P(\mu)\cap Z_p(\n_\k^-))\cap\n_\k^+.
\end{eqnarray}
Let $\gr\big(I_\P(\mu)\cap Z_p(\n_\k^-)\big)$ be the homogeneous
ideal of $S(\n_\k^-)^p$ spanned by the highest components of all
elements in $I_\P(\mu)\cap Z_p(\n_\k^-)$. From (\ref{VIP}) it
follows that the zero locus of $\gr\big(I_\P(\mu)\cap
Z_p(\n_\k^-)\big)$ in $\n_\k^+$ coincides with ${\mathbb
K}(V_\P(\mu))$, the associated cone to $V_\P(\mu)$ (associated cones
are defined in [\cite{BK1}, \S3], for instance). Since
$I_\P(\mu)\cap Z_p(\n_\k^-)$ is contained in ${\rm
Ann}_{Z_p}\,\bar{v}_\mu$, all elements of $\gr\big(I_\P(\mu)\cap
Z_p(\n_\k^-)\big)$ annihilate $\gr_0\,
\bar{v}_\mu\in\gr\,L_\P(\mu)$. Then $\gr\big(I_\P(\mu)\cap
Z_p(\n_\k^-)\big)\subset {\mathcal J}_\P\cap S(\n_\k^-)$, which
yields
\begin{eqnarray}\label{VA-VP}
{\mathcal V}_\g L_\P(\mu)\,=V({\mathcal J}_\P\cap
S(\n_\k^-))\cap\n_\k^+\,\subseteq\, {\mathbb K}(V_\P(\mu)).
\end{eqnarray}
\begin{theorem}\label{thm1}
Under the above assumptions on $A$, the variety $V_\P(\mu)$ contains
an irreducible component of maximal dimension which coincides with
the Zariski closure of an irreducible component of
$\n_\k^+\cap(\Ad\,G_\k)e$.
\end{theorem}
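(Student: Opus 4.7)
The strategy is to sandwich $V_\P(\mu)$, viewed via $\kappa$ as a subvariety of $\n_\k^+$, between two subvarieties of matching dimension $d(e)$, and then to invoke Spaltenstein's equidimensionality theorem to identify the top-dimensional components.

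For the lower bound $\dim V_\P(\mu) \geq d(e)$, I would use the inclusion (\ref{VA-VP}): since $\mathcal{V}_\g L_\P(\mu) \subseteq \mathbb{K}(V_\P(\mu))$, the associated cone and its underlying variety share dimension, and (\ref{13}) gives $\dim \mathcal{V}_\g L_\P(\mu) = d(e)$.

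For the upper bound I would prove $V_\P(\mu) \subseteq \overline{(\Ad G_\k)\bar{e}} \cap \n_\k^+$. For $\xi \in V_\P(\mu)$, Lemma~\ref{le1} produces a nonzero $U_\xi(\g_\k)$-module $L_\P^\xi(\mu)$, and Lemma~\ref{le2} bounds its dimension by $Dp^{d(e)}$; Premet's Kac--Weisfeiler dimension bound (recalled in (\ref{4.3})) then forces $\dim(\Ad G_\k)\xi \leq 2d(e)$ once $p > D$. The delicate step is refining this dimension bound to the specific closure $\overline{(\Ad G_\k)\bar{e}}$ rather than to the union of closures of all nilpotent orbits of dimension at most $2d(e)$. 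To accomplish this I would use the choice of $\psi_i$'s made in (\ref{2.4}): the reductions $\bar{u}_i$ of the generators $u_i$ of $\mathcal{I}$ annihilate $\bar{v}_\mu \in L_\P(\mu)$, and since $u_i^p \in \mathcal{I}$ has leading term $\psi_i^p$ in the PBW filtration, passing to reductions and extracting appropriate central lifts in $Z_p$ (via the $p$-th power structure on $U(\g_\k)$ and Jacobson-type formulas) yields elements of $\bar{\mathcal{I}}_\k \cap Z_p$ that must vanish at $\xi$ because $L_\P^\xi(\mu) \neq 0$; this in turn forces $\bar{\psi}_i(\xi) = 0$ for every $i$. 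Since the $\bar{\psi}_i$'s cut out $\overline{(\Ad G_\k)\bar{e}}$ on the nilpotent cone of $\g_\k$ by our arrangement in (\ref{2.4}), and $\xi$ is nilpotent (being in $\n_\k^+$), this yields $\xi \in \overline{(\Ad G_\k)\bar{e}} \cap \n_\k^+$.

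To conclude, I would invoke Spaltenstein's theorem: the variety $(\Ad G_\k)\bar{e} \cap \n_\k^+$ is equidimensional of dimension $d(e)$, whence $\overline{(\Ad G_\k)\bar{e}} \cap \n_\k^+$ is equidimensional of the same dimension, its top-dimensional irreducible components being precisely the Zariski closures of the irreducible components of $(\Ad G_\k)\bar{e} \cap \n_\k^+$. Combining the two bounds gives $\dim V_\P(\mu) = d(e)$, so any top-dimensional irreducible component of $V_\P(\mu)$ is a top-dimensional closed irreducible subvariety of $\overline{(\Ad G_\k)\bar{e}} \cap \n_\k^+$, hence an irreducible component thereof, and so coincides with the Zariski closure of an irreducible component of $(\Ad G_\k)\bar{e} \cap \n_\k^+$. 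The main technical obstacle is the refinement in the upper-bound step: producing enough central lifts of the $\bar{u}_i^p$ inside $\bar{\mathcal{I}}_\k \cap Z_p$ to enforce the vanishing of all $\bar{\psi}_i$ at $\xi$, which requires controlling the interaction between the PBW filtration on $U(\g_\k)$ and the Frobenius-twisted $p$-centre.
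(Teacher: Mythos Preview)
Your overall architecture is the right one and matches the paper: sandwich the dimension of $V_\P(\mu)$ between $d(e)$ and $d(e)$, and then invoke Spaltenstein's equidimensionality of $\n_\k^+\cap\overline{(\Ad G_\k)\bar e}$. Your lower bound via (\ref{VA-VP}) and (\ref{13}) is exactly how the paper proceeds, and your Kac--Weisfeiler step (applied to a composition factor of $L_\P^\xi(\mu)$, using Lemma~\ref{le1}, Lemma~\ref{le2} and $p>D$) correctly yields $d(\xi)\le d(e)$ for every $\xi\in V_\P(\mu)$; together with Spaltenstein this already gives $\dim V_\P(\mu)\le d(e)$.

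The gap is in your refinement step. You try to show the strong inclusion $V_\P(\mu)\subseteq\overline{(\Ad G_\k)\bar e}\cap\n_\k^+$ by manufacturing central elements in $\bar{\mathcal I}_\k\cap Z_p$ from $\bar u_i^{\,p}$ via ``Jacobson-type formulas''. This is not how the argument runs, and the mechanism you sketch does not obviously produce anything: for a general $u\in U(\g_\k)$ the element $u^p$ is not central, there is no canonical ``central lift'' with symbol $\bar\psi_i^{\,p}$, and controlling $\gr(\bar{\mathcal I}_\k\cap Z_p)$ from $\gr\bar{\mathcal I}_\k$ is exactly the hard direction. You yourself flag this as the main obstacle, and rightly so.

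The point you are missing is that the $\bar\psi_i$'s constrain $\mathcal V_\g L_\P(\mu)$, not $V_\P(\mu)$, and that this is enough. Since each $\psi_i\in\gr\,\mathcal I$ is the symbol of some $w_i\in\mathcal I$ annihilating $v_\mu$, one has $\psi_i\in\mathcal J_A$ and hence $\bar\psi_i\in\mathcal J_\P$; thus $\mathcal V_\g L_\P(\mu)\subseteq V(\{\bar\psi_i\})\cap\n_\k^+$. The $\k$-span of the $\bar\psi_i$ is $G_\k$-stable (by the $U_\Z$-stability built into (\ref{2.4})), and the arrangement of the values $\psi_i(\chi_j)$ in (\ref{2.4}) forces $V(\{\bar\psi_i\})\cap\mathcal N(\g_\k)=\overline{(\Ad G_\k)\bar e}$. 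Hence $\mathcal V_\g L_\P(\mu)\subseteq\overline{(\Ad G_\k)\bar e}\cap\n_\k^+$. By (\ref{13}) and Spaltenstein, a top-dimensional component $C$ of $\mathcal V_\g L_\P(\mu)$ is already an irreducible component of $\overline{(\Ad G_\k)\bar e}\cap\n_\k^+$, hence a cone of dimension $d(e)$. By (\ref{VA-VP}), $C\subseteq\mathbb K(V_\P(\mu))$, and since $\dim V_\P(\mu)=d(e)$, $C$ is a component of $\mathbb K(V_\P(\mu))$, so $C=\mathbb K(Z)$ for some component $Z$ of $V_\P(\mu)$. Finally, your own Kac--Weisfeiler bound shows $Z$ sits inside $\n_\k^+\cap\overline{\mathcal O'}$ for a single orbit $\mathcal O'$ of dimension $2d(e)$; equidimensionality forces $Z$ to be a component of that cone, hence $Z$ is itself conical and $Z=\mathbb K(Z)=C$. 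This is the argument of \cite{P07'} that the paper invokes; no ``central lift'' construction is needed.
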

\begin{proof}
This is a slight generalisation of [\cite{P07'}, Thm.~3.1]. In view
of (\ref{VA-VP}) and (\ref{Krull}) one just needs to replace
$V_p(\mu)$ by $V_\P(\mu)$,  ${\mathcal V}_\g L_p(\mu)$ by ${\mathcal
V}_\g L_\P(\mu)$ and ${\mathcal J}_p$ by ${\mathcal J}_\P$, and
repeat the argument used in [\cite{P07'}].
\end{proof}

Recall from (\ref{2.4}) the generating set $\{u_i\,|\,\,i\in I\}$ of
the primitive ideal $\mathcal I$. By construction, $u_i\in U(\g_A)$
for all $i$ and the $A$-span of the $u_i$'s is invariant under the
adjoint action of $\g_A$. Let $\bar{u}_i$ be the image of $u_i$ in
$U(\g_\k)=U(\g_A)\otimes_A\k$. Clearly, the $\k$-span of the
$\bar{u}_i$'s is invariant under the adjoint action of $\g_\k$. Let
$\varphi_\chi\colon\, U(\g_A)\twoheadrightarrow
Q_{\chi,\,A}=U(\g_A)/N_{\chi,\,A}$ be the canonical homomorphism,
and denote by $\bar{\varphi}_\chi$ the induced epimorphism from
$U_{\chi}(\g_\k)$ onto $Q_{\chi}^\chi$; see (\ref{4.2}) and
Lemma~\ref{Q-eta}(i). By Lemmas~\ref{Q-eta} and \ref{lem4}, there
exists a finite subset $C$ of $\Z_+^{d(e)}$ such that
\begin{eqnarray}\label{u-h}\varphi_\chi(u_i)\,=\, \sum_{{\bf c}\in\, C}\,X^{\bf c}
h_{i,\,{\bf c}}(1_\chi)\qquad\qquad \ \, \big(h_{i,\,\bf c}\in
U(\g_A,e),\ \,i\in I\big).
\end{eqnarray}
On the other hand, it follows from Lemma~\ref{L1} that the
$\k$-algebra $U_\chi(\g_\k,e)$ is a homomorphic image of the
$\k$-algebra $U(\g_\k,e)$. Let $\bar{h}_{i,\,\bf c}$ denote the
image of $h_{i,\,\bf c}\otimes 1$ in $U_\chi(\g_\k,e)$.
From(\ref{u-h}) we get
\begin{eqnarray}\label{u-h-p}\bar{\varphi}_\chi(\bar{u}_i)\,=\, \sum_{{\bf c}\in\, C}\,
\bar{X}^{\bf c} \bar{h}_{i,\,{\bf
c}}(\bar{1}_{\bar{\chi}})\qquad\qquad \ (\forall\,i\in I).
\end{eqnarray}
Put $c:=\max_{{\bf c}\in\, C}|{\bf c}|$. From now on we shall assume
that $c!$ is invertible in $A$. This will ensure that the components
of all tuples in $C$ are smaller that any prime in $\pi(A)$.
\begin{prop}\label{prop2}
Under the above assumptions on $A$, for every $\P\in{\rm Specm}\, A$
with $A/\P\cong{\mathbb F}_p$ there is a positive integer
$k=k(\P)\le D=D(\mu)$ such that the algebra $U_\chi(\g_\k,e)$ has an
irreducible $k$-dimensional representation $\rho$ with the property
that $\rho(\bar{h}_{i,\,{\bf c}})=0$ for all ${\bf c}\in C$ and all
$i\in I$.
\end{prop}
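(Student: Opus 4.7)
My plan is to produce the desired $\rho$ by combining Theorem~\ref{thm1} with the Morita isomorphism from Lemma~\ref{Q-eta}(iii), and then to read off the vanishing $\rho(\bar h_{i,\mathbf c})=0$ from the presentation~(\ref{u-h-p}) via the freeness result of Lemma~\ref{lem4}. By Theorem~\ref{thm1} the variety $V_\P(\mu)$ contains an irreducible component coinciding with the Zariski closure of an irreducible component of $\n_\k^+\cap(\Ad G_\k)e$; this component is a $G_\k$-translate of an open subset of $(\Ad G_\k)e$, so I can pick $\eta\in V_\P(\mu)$ of the form $\eta=({\rm Ad}^*g)\chi$ for some $g\in G_\k$. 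Lemma~\ref{le1} says that the $U_\eta(\g_\k)$-module $L_\P^\eta(\mu)$ is nonzero, and Lemma~\ref{le2} bounds its $\k$-dimension by $Dp^{d(e)}$. Since $\mathcal I$ annihilates $L(\mu)$, each $\bar u_i$ kills $L_\P(\mu)$ and hence $L_\P^\eta(\mu)$.

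Next I would transport everything to $\chi$. The $\Z$-submodule ${\mathcal I}_{\Z,n'}$ was chosen in~(\ref{2.4}) to be $(\Ad U_\Z)$-stable, so after reduction modulo $\P$ the $\k$-span of $\{\bar u_i\}$ is $(\Ad G_\k)$-invariant. Conjugation by $g^{-1}$ induces an algebra isomorphism $U_\eta(\g_\k)\isto U_\chi(\g_\k)$, and pulling $L_\P^\eta(\mu)$ back along it gives a $U_\chi(\g_\k)$-module $M$ still annihilated by every $\bar u_i$. Let $W$ be an irreducible subquotient of $M$. By Lemma~\ref{Q-eta}(iii), $\dim_\k W=kp^{d(e)}$ with $1\le k\le D$, and Morita equivalence attaches to $W$ an irreducible $U_\chi(\g_\k,e)$-module $V$ of dimension $k$, realised as $W\cong Q_\chi^\chi\otimes_{U_\chi(\g_\k,e)}V$; let $\rho$ be the corresponding representation.

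For every $v\in V$, the equality $\bar u_i\cd(\bar 1_{\bar\chi}\otimes v)=0$ in $W$ combined with~(\ref{u-h-p}) and the compatibility of the right $U_\chi(\g_\k,e)$-action on $Q_\chi^\chi$ with its left $U_\chi(\g_\k)$-action yields
\[
0\;=\;\sum_{\mathbf c\in C}\big(\bar X^{\mathbf c}\cd\bar 1_{\bar\chi}\big)\otimes(\bar h_{i,\mathbf c}\cd v).
\]
Our assumption $c!\in A^\times$ forces every coordinate of every $\mathbf c\in C$ to be strictly less than $p$, so by Lemma~\ref{lem4} the family $\{\bar X^{\mathbf c}\cd\bar 1_{\bar\chi}:\mathbf c\in C\}$ is part of a free right $U_\chi(\g_\k,e)$-basis of $Q_\chi^\chi$. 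Linear independence across the tensor product then gives $\bar h_{i,\mathbf c}\cd v=0$ for all $\mathbf c$ and every $v\in V$, i.e.\ $\rho(\bar h_{i,\mathbf c})=0$ for all $i\in I$ and $\mathbf c\in C$.

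The main obstacle I foresee is verifying that the transported module $M$ remains annihilated by each $\bar u_i$: this relies on the $(\Ad G_\k)$-invariance of the $\k$-span of $\{\bar u_i\}$, which in turn rests on the careful construction in~(\ref{2.4}) of the $(\Ad U_\Z)$-stable $\Z$-form $\mathcal I_{\Z,n'}$. A second technical point is the Morita bookkeeping that rewrites $\bar X^{\mathbf c}\bar h_{i,\mathbf c}(\bar 1_{\bar\chi})\otimes v$ as $(\bar X^{\mathbf c}\cd\bar 1_{\bar\chi})\otimes(\bar h_{i,\mathbf c}\cd v)$ in a way compatible with the right action used in the tensor product; this is precisely the reason for imposing $c!\in A^\times$ at the end of~(\ref{3.4}), so that Lemma~\ref{lem4} can be invoked to separate the contributions of distinct $\mathbf c\in C$.
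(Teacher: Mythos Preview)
Your proposal is correct and follows essentially the same approach as the paper: produce a point $\eta=(\Ad^*g)\chi\in V_\P(\mu)$ via Theorem~\ref{thm1}, use Lemmas~\ref{le1} and~\ref{le2} to get a nonzero $U_\eta(\g_\k)$-module of bounded dimension annihilated by the $\bar u_i$, transport to $p$-character $\chi$ using the $(\Ad G_\k)$-stability of the span of the $\bar u_i$, pass to an irreducible constituent, and then read off $\rho(\bar h_{i,\mathbf c})=0$ from~(\ref{u-h-p}) together with the free basis of Lemma~\ref{lem4}. The only cosmetic differences are that the paper takes a composition factor before twisting (you twist first and then take a subquotient, which is equivalent), and that the paper realises $V$ concretely as the space $V_0'$ of $\m_\k$-Whittaker vectors and checks $Q_\chi^\chi\otimes_{U_\chi(\g_\k,e)}V_0'\cong V'$ by a dimension count, whereas you invoke the Morita equivalence of Lemma~\ref{Q-eta}(iii) directly.
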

\begin{proof}
Let $\P\in{\rm Specm}\,A$ be such that $A/\P\cong{\mathbb F}_p$. By
Lemma~\ref{le1} and Theorem~\ref{thm1}, there exists $g\in G_\k$
such that $L_\P^{g\,\cdot\,\chi}(\mu)\ne 0$, where $g\cdot\chi=({\rm
Ad}^*\,g)\chi$. By [\cite{P95}, Thm.~3.10] and Lemma~\ref{le2},
every composition factor $V$ of the $\g_\k$-module
$L_p^{g\,\cdot\,\chi}(\mu)$ has dimension $kp^{d(e)}$ for some
$k=k(V)\le D$. Since $u_i\in{\rm Ann}_{U(\g_A)}\,L_A(\mu)$ for all
$i\in I$, the elements $\bar{u}_i\in U(\g_\k)$ annihilate
$L_p(\mu)=L_A(\mu)\otimes_A\k$. Consequently, all $\bar{u}_i$
annihilate
$L_\P^{g\,\cdot\,\chi}(\mu)=L_\P(\mu)/I_{g\cdot\,\chi}L_\P(\mu)$,
and hence $V$.

Since $(\Ad\,g)(I_\chi)=I_{g\cdot\,\chi}$, the map
$\Ad\,g\colon\,U(\g_\k)\rightarrow\,U(\g_\k)$ gives rise to an
algebra isomorphism  $U_\chi(\g_\k)\stackrel{\sim}{\longrightarrow}
U_{g\cdot\,\chi}(\g_\k)$. Let $V'=\{v'\,|\,\,\,v\in V\}$, a vector
space copy of $V$. Give $V'$ a $\g_\k$-module structure by setting
$x\cdot v'\,:=\,((\Ad g)^{-1}x \cdot v)'$ for all $x\in\g_\k$ and
$v'\in V'.$ Since all elements $((\Ad g)x)^p-((\Ad
g)x)^{[p]}-\chi(x)^p1$ annihilate $V$, the $\g_\k$-module $V'$ has
$p$-character $\chi$. Furthermore, all elements $(\Ad g)\bar{u}_i$
annihilate $V'$. The $\Z$-span of $\{u_i\,|\,\,i\in I\}$ is
invariant under the adjoint action of $U_\Z$ on $U(\g_\Z)$; see
(\ref{2.4}). Since $U_\Z\otimes_\Z\k$ is the hyperalgebra of $G_\k$,
the $\k$-span of the $\bar{u}_i$'s is invariant under the adjoint
action of $G_\k$ on $U(\g_\k)$. In conjunction with our preceding
remark this implies that $\bar{u}_i\in{\rm Ann}_{U(\g_\k)}\,V'$ for
all $i\in I$. Let
$$V'_0\,=\,\{v'\in V'\,|\,\,\,x\cdot v'=\,\chi(x)v'\ \, \mbox{for all
}\,x\in\m_\k\}.$$ Since $U_\chi(\g_\k,e)\,\cong
\big(U_{\chi}(\g_\k)/U_{\chi}N_{\chi,\,\k}\big)^{\ad\m_\k}$ by
Lemma~\ref{Q-eta}(ii), the algebra $U_\chi(\g_\k,e)$ acts on $V'_0$.
Since $\m_\k$ is a $\chi$-admissible subalgebra of dimension $d(e)$
in $\g_\k$, it follows from [\cite{P02}, Thm.~2.4] that $V'_0$ is an
irreducible $k$-dimensional $U_\chi(\g_\k,e)$-module. We let $\rho$
stand for the corresponding representation of $U_\chi(\g_\k,e)$.

Denote by $V''$ the $U_\chi(\g_\k)$-module
$Q_{\chi}^\chi\otimes_{\,U_\chi(\g_\k,\,e)}V_0'$ and let
$v_1',\ldots, v_k'$ be a basis of $V_0'$. It follows from
Lemma~\ref{lem4} that the vectors $\bar{X}^{\bf a}\otimes v_j'$ with
$0\le a_i\le p-1$ and $1\le j\le k$ form a basis of $V''$ over $\k$.
Since $V'$ is an irreducible $\g_\k$-module, there is a
$\g_\k$-module epimorphism $\tau\colon\,V''\twoheadrightarrow V'$
sending $v'\otimes 1$ to $v'$ for all $v'\in V_0'$. Since $\dim_\k
V'=kp^{d(e)}$, the map $\tau$ is an isomorphism. Let $\tilde{\rho}$
stand for the representation of $U_{\chi}(\g_\k)$ in $\End_\k V''$.
As $N_{\chi,\,\k}$  annihilates $V'_0\otimes 1\subseteq V''$, it
follows from (\ref{u-h-p}) that
$$
0\,=\,\tilde{\rho}(\bar{u}_i)(v'\otimes
1)\,=\,\tilde{\rho}(\bar{\varphi}_\chi(\bar{u}_i))(v'\otimes 1)
\,=\,\,\textstyle{\sum}_{{\bf c}\in\, C}\,\,\bar{X}^{\bf
c}\otimes\rho(\bar{h}_{i,\,{\bf c}})(v')
$$
for all $v'\in V'_0$. As the nonzero vectors of the form
$\bar{X}^{\bf c}\otimes\rho(\bar{h}_{i,\,{\bf c}})(v')$ with $v'$
fixed are linearly independent by our assumption on $A$, we see that
$\rho(\bar{h}_{i,\,{\bf c}})=0$ for all ${\bf c}\in C$ and all $i\in
I$. This completes the proof.
\end{proof}
\subsection{}\label{6.6}
By our discussion in (\ref{4.2'}), there are polynomials
$H_{i,\,{\bf c}}\in A[X_1,\ldots, X_r]$ such that $h_{i,\,{\bf
c}}=H_{i,\,{\bf c}}(\Theta_1,\ldots, \Theta_r)$ for all ${\bf c}\in
C$ and $i\in I$. Let ${\mathcal I}_{\mathcal W}$ be the two-sided
ideal of $U(\g,e)$ generated by the $h_{i,\,{\bf c}}$'s. In view of
(\ref{relations}) and [\cite{P07'}, Lemma~4.1], the algebra
$U(\g,e)/{\mathcal I}_{\mathcal W}$ is isomorphic to the quotient of
the free associative algebra ${\mathbb C}\la X_1,\ldots, X_r\ra$ by
its two-sided ideal generated by all elements
$[X_i,X_j]-F_{ij}(X_1,\ldots, X_r)$ with $1\le i<j\le r$ and all
elements $H_{{\bf c},\,l}(X_1,\ldots, X_r)$ with ${\bf c}\in C$ and
$l\in I$. Given a natural number $d$ we denote by ${\mathcal M}_d$
the set of all $r$-tuples $(M_1,\ldots, M_r)\in \Mat_d({\mathbb
C})^r$ satisfying the relations
\begin{eqnarray*}
[M_i,M_j]-F_{ij}(M_1,\ldots, M_r)&=&0\qquad\quad (1\le i<j\le r)\\
H_{{\bf c},\,l}(M_1,\ldots, M_r)&=&0 \qquad\quad ({\bf c}\in C,\,\,
l\in I)
\end{eqnarray*}
(the monomials in $M_1,\ldots,M_r$ involved in $F_{ij}(M_1,\ldots,
M_r)$ and $H_{{\bf c},\,l}(M_1,\ldots, M_r)$ are evaluated by using
the matrix product in $\Mat_d(\CC)$). The preceding remark shows
that ${\mathcal M}_d$ is nothing but the variety of all matrix
representations of degree $d$ of the algebra $U(\g,e)/{\mathcal
I}_{\mathcal W}$.
\begin{lemma}\label{max-ideals}
The set $\pi(A)$ of all primes $p$ such that $A/\P\cong {\mathbb
F}_p$ for some $\P\in{\rm Specm}\,A$ is infinite for any finitely
generated $\Z$-subalgebra $A$ of $\CC$.
\end{lemma}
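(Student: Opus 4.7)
The plan is to reduce the problem, via Noether normalisation and a specialisation of the transcendence parameters, to the case where $A$ is an order in a number field, and then to conclude by an elementary argument on roots of integer polynomials modulo primes.

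Since $A$ is a finitely generated $\mathbb Z$-domain of characteristic zero, I would first apply Noether normalisation to $A\otimes_{\mathbb Z}\mathbb Q$ and clear denominators to obtain algebraically independent elements $t_1,\ldots,t_d\in A$ and a nonzero integer $c$ such that $A[c^{-1}]$ is a finite module over $R:=\mathbb Z[c^{-1}][t_1,\ldots,t_d]$. Next I would specialise the transcendentals to $0$ and form $B:=A[c^{-1}]\otimes_R R/(t_1,\ldots,t_d)$, a finite module over $R/(t_1,\ldots,t_d)=\mathbb Z[c^{-1}]$. Going-Up applied to the integral extension $R\hookrightarrow A[c^{-1}]$ guarantees $B\ne 0$, and the same reasoning over $\mathbb Q$ shows $B\otimes_{\mathbb Z}\mathbb Q\ne 0$, so $B$ admits a minimal prime $\mathfrak q$ for which $B/\mathfrak q$ has characteristic zero. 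Module-finiteness over $\mathbb Z[c^{-1}]$ then forces $B/\mathfrak q$ to be an order in a number field $K$ (with the primes dividing $c$ inverted). A direct check yields $\pi(B/\mathfrak q)\subseteq \pi(A)$: any surjection $B/\mathfrak q\twoheadrightarrow\mathbb F_p$ composes with $A\hookrightarrow A[c^{-1}]\twoheadrightarrow B\twoheadrightarrow B/\mathfrak q$ to a ring map $A\to\mathbb F_p$ whose image is a subring of $\mathbb F_p$ containing $1$, hence equal to $\mathbb F_p$; the kernel is a maximal ideal of $A$ with residue field $\mathbb F_p$.

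It remains to show that $\pi(B/\mathfrak q)$ is infinite. Choose a primitive element $\alpha\in\mathcal O_K$ with minimal polynomial $f\in\mathbb Z[X]$ of degree $n=[K:\mathbb Q]$. For every rational prime $p$ lying outside the finite bad set consisting of primes dividing $c$, the discriminant of $f$, the conductor $[\mathcal O_K:\mathbb Z[\alpha]]$, and the conductor $[\mathcal O_K:B/\mathfrak q]$, one has $(B/\mathfrak q)\otimes_{\mathbb Z}\mathbb F_p\cong\mathbb F_p[X]/(\bar f)$, so a maximal ideal with residue field $\mathbb F_p$ exists precisely when $\bar f$ has a root in $\mathbb F_p$. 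I would then invoke the classical elementary fact that any nonconstant $f\in\mathbb Z[X]$ acquires a root modulo infinitely many primes: otherwise only primes in a fixed finite set $S$ could divide any value $f(m)$ with $m\in\mathbb Z$, but the number of integers of absolute value at most $X$ all of whose prime divisors lie in $S$ grows only polylogarithmically in $X$, while $\{|f(m)|:|m|\le X^{1/n}\}$ produces $\gg X^{1/n}$ values bounded by $X$, a contradiction for $X$ large.

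The main obstacle is arranging the reduction so that the specialisation $B$ carries a minimal prime of characteristic zero; granting this, the remaining steps are standard commutative algebra combined with a short diophantine estimate. One could substitute the Chebotarev density theorem for the final elementary step, but retaining the polynomial-roots argument keeps the proof self-contained.
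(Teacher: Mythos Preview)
Your proof is correct and follows a route parallel to, but distinct from, the paper's argument. Both proofs reduce to the case of an order in a number field and then show that a nonconstant integer polynomial acquires a root modulo infinitely many primes, but the two steps are handled differently.

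For the reduction, the paper simply invokes Hilbert's Nullstellensatz to produce a ring homomorphism $A\to\overline{\mathbb Q}$, replaces $A$ by its image, and notes that a finitely generated $\mathbb Z$-subalgebra of a number field sits inside some $\mathbb Z[b^{-1}][X]/(f)$; one line then reduces to $A=\mathbb Z[X]/(f)$. Your Noether normalisation plus specialisation of the transcendence parameters achieves the same end, with a bit more work but also more explicitly; your verification that a characteristic-zero minimal prime of $B$ exists and that $\pi(B/\mathfrak q)\subseteq\pi(A)$ is clean.

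For the endgame, the paper quotes the Chebotarev density theorem (packaged via Burnside's lemma as the statement that $\sum_{p\le x}N_p(f)\sim\pi(x)$), which is heavy machinery for this purpose. Your counting argument---that if only primes in a finite set $S$ divided the nonzero values $f(m)$, then the $\gg X^{1/n}$ distinct values with $|f(m)|\le X$ would have to fit among the $O((\log X)^{|S|})$ many $S$-smooth integers below $X$---is entirely elementary and self-contained. This is a genuine improvement in terms of prerequisites, and it is worth keeping; the paper itself remarks that one could bypass Chebotarev, and your argument does exactly that.
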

\begin{proof}
By Hilbert's Nullstellensatz, there is an algebra homomorphism
$A\to\overline{\mathbb Q}$. Thus, in proving the lemma we may assume
that $A\subset\overline{\mathbb Q}$. Then $A$ is a finitely
generated $\Z$-subalgebra of an algebraic number field $K={\mathbb
Q}[X]/(f)$, where $f\in\Z[X]$ is a polynomial of positive degree
irreducible over $\mathbb Q$. Then $A\subseteq \Z[b^{-1}][X]/(f)$
for some $b\in\Z^\times$. Since $b$ has only finitely many prime
divisors, we may assume without loss of generality that
$A=\Z[X]/(f)$ and $\deg f>1$.

Given $x\in\mathbb R$ denote by $\pi(x)$ the number of primes $\le
x$. If $p$ is a prime, let $N_p(f)$ be the number of zeros of $f$ in
${\mathbb F}_p=\Z/p\Z$. As explained in [\cite{Serre}], for
instance, it follows from Burnside's Lemma and Chebotarev's Density
Theorem that
\begin{equation}\label{lim}
\lim_{x\to\infty}\,\frac{\textstyle{\sum}_{p\le
\,x}\,N_p(f)}{\pi(x)}\,=\,1.
\end{equation}
Because $A=\Z[X]/(f)$, the set $\pi(A)$ consists of all primes $p$
with $N_p(f)\ne 0$. In view of (\ref{lim}) this implies that
$|\,\pi(A)|=\infty$.
\end{proof}

\subsection{}\label{new} Let ${\bf J}_d$ be the ideal of ${\mathcal P}:={\mathbb
C}\,[x_{ab}^{(k)}\,|\,\,1\le a,b\le d,\, 1\le k\le r]$ generated by
the matrix coefficients of all $[M_i,M_j]-F_{ij}(M_1,\ldots, M_r)$
and $H_{{\bf c},\,l}(M_1,\ldots, M_r)$, where $M_k$ is the generic
matrix $\big(x_{ab}^{(k)}\big)_{1\le a,b\le d}$. Note that
${\mathcal M}_d$ is nothing but the zero locus of ${\mathbf J}_d$ in
${\rm Specm}\,{\mathcal P}=\,{\mathbb A}^{rd^2}({\mathbb C})$. In
particular, ${\mathcal M}_d$ as a Zariski closed subset of ${\mathbb
A}^{rd^2}({\mathbb C})$. As all $F_{ij}$ and $H_{{\bf c},\,l}$ are
in $A[X_1,\ldots, X_r]$, the ideal ${\bf J}_d$ is generated by a
finite set of polynomials in ${\mathcal
P}_A=A[x_{ab}^{(k)}\,|\,\,1\le a,b\le d,\, 1\le k\le r]$, say
$\{f_1,\ldots, f_N\}$. Given $g\in{\mathcal P}_A$ and an algebra
homomorphism $\nu\colon\,A\to {\mathbb F}_p$, we write $^{\nu\!}{g}$
for the image of $g$ in ${\mathcal P}_A\otimes_A(A/\ker\,\nu)\subset
{\mathcal P}_A\otimes_A\overline{\mathbb F}_p$ and denote by
${\mathcal M}_d(\overline{\mathbb F}_p)$ the zero locus of
$^{\nu\!}f_1,\ldots, {^{\nu\!}f_N}$ in ${\mathbb
A}^{rd^2}({\overline{\mathbb F}_p})$.
\begin{prop}\label{prop3}
The algebra $U(\g,e)/{\mathcal I}_{\mathcal W}$ has an irreducible
representation of dimension at most $D=D(\mu)$.
\end{prop}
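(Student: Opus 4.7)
The plan is to combine Proposition~\ref{prop2} with Lemma~\ref{max-ideals} via a standard characteristic-$p$ to characteristic-zero transfer. First I would interpret Proposition~\ref{prop2} in the language of the varieties $\mathcal M_d$: an irreducible representation of $U_\chi(\g_\k,e)$ on which every $\bar h_{i,{\bf c}}$ acts as zero inflates along the natural surjection $U(\g_\k,e)\twoheadrightarrow U_\chi(\g_\k,e)$ to an irreducible representation of $U(\g_\k,e)/(\mathcal I_\mathcal W)_\k$. By Lemma~\ref{L1} this quotient is presented by exactly the relations cutting out the $\overline{\mathbb F}_p$-points of $\mathcal M_{k(\P)}$, so Proposition~\ref{prop2} supplies, for every $\P\in{\rm Specm}\,A$ with $A/\P\cong\mathbb F_p$, an integer $k(\P)\le D$ with $\mathcal M_{k(\P)}(\overline{\mathbb F}_p)\neq\varnothing$.

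Now I argue by contradiction, supposing $\mathcal M_d(\mathbb C)=\varnothing$ for every $d\in\{1,\ldots,D\}$. For each such $d$, emptiness of the closed subscheme $\mathcal M_d\otimes_A\mathbb C$ of $\mathbb A^{rd^2}_{\mathbb C}$ forces $\mathbf J_d\,\mathcal P=\mathcal P$. Tensoring down to the subfield $\mathrm{Frac}(A)\subset\mathbb C$ and using that the tensor product of a $\mathrm{Frac}(A)$-vector space with the nonzero $\mathrm{Frac}(A)$-vector space $\mathbb C$ is faithful yields $\mathbf J_d\,\mathcal P_{\mathrm{Frac}(A)}=\mathcal P_{\mathrm{Frac}(A)}$, i.e.\ a relation $1=\sum_i g_{i,d}\,f_i$ with $g_{i,d}\in\mathcal P_{\mathrm{Frac}(A)}$. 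Clearing denominators produces a nonzero $b_d\in A$ such that this relation already holds in $\mathcal P_{A[b_d^{-1}]}$. Setting $b:=\prod_{d=1}^D b_d\neq 0$, for every maximal ideal $\P\in{\rm Specm}\,A$ with $b\notin\P$ reduction of all $D$ relations modulo $\P$ shows that $\mathcal M_d(\overline{\mathbb F}_p)=\varnothing$ for every $d\le D$, where $p=\mathrm{char}(A/\P)$.

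To close the argument I would apply Lemma~\ref{max-ideals} to the finitely generated $\mathbb Z$-algebra $A[b^{-1}]$: its set $\pi(A[b^{-1}])$ is infinite, and each $\P'\in{\rm Specm}\,A[b^{-1}]$ with $A[b^{-1}]/\P'\cong\mathbb F_{p'}$ contracts to a maximal ideal $\P:=\P'\cap A$ of $A$ with $b\notin\P$ whose residue field is a subring of $\mathbb F_{p'}$, hence (as any finite integral domain is a field, and $\mathbb F_{p'}$ has no proper subfield) equal to $\mathbb F_{p'}$. For every such $\P$ the previous paragraph gives $\mathcal M_{d}(\overline{\mathbb F}_{p'})=\varnothing$ for all $d\le D$, contradicting Proposition~\ref{prop2} applied to that very $\P$. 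Therefore $\mathcal M_{k_0}(\mathbb C)\neq\varnothing$ for some $k_0\le D$; any point of this variety yields a (possibly reducible) $k_0$-dimensional representation of $U(\g,e)/\mathcal I_\mathcal W$, and an irreducible subquotient then has dimension at most $k_0\le D$.

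The main obstacle lies in the book-keeping of the homomorphisms $A\to\mathbb F_p$: the polynomials $^\nu f_i$ cutting out $\mathcal M_d(\overline{\mathbb F}_p)$ depend on $\nu$ and not merely on $p$, because the defining relations carry transcendental coefficients coming from $\la\mu,\alpha^\vee\ra$. Recording the annihilation relations $1=\sum g_{i,d}f_i$ over a single common localization $A[b^{-1}]$ of $A$ is precisely what guarantees that the same $\nu$ (arising from one chosen $\P$) appears on both sides of the contradiction. Apart from this, the transfer uses only Hilbert's Nullstellensatz for finitely generated $\mathbb Z$-algebras, faithful flatness of field extensions and Lemma~\ref{max-ideals}.
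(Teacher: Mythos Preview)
Your proof is correct and follows the same strategy as the paper's: contradiction via the Nullstellensatz and Lemma~\ref{max-ideals} against Proposition~\ref{prop2}. The only difference is cosmetic---you descend the certificate $1=\sum g_i f_i$ from $\mathbb C$ to $\mathrm{Frac}(A)$ by faithful flatness and clear denominators to land in a localization $A[b^{-1}]$, whereas the paper simply adjoins the complex coefficients of the $g_i$ to $A$ to form a larger finitely generated $\mathbb Z$-algebra $B$ and applies Lemma~\ref{max-ideals} directly to $B$; your explicit passage to an irreducible subquotient at the end also fills in a step the paper leaves implicit.
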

\begin{proof}
We need to show that ${\mathcal M}_d(\CC)\ne\varnothing$ for some
$d\le D$. Suppose this is not the case. Then
$g_1f_1+\cdots+g_Nf_N=1$ for some $g_1,\ldots,g_N\in \mathcal P$.
Let $B$ be the $A$-subalgebra of $\CC$ generated by the coefficients
of $g'_1,\ldots,g'_N$. By Lemma~\ref{max-ideals}, the set $\pi(B)$
is infinite. Take $p\in\pi(B)$ and let
$\nu\colon\,B\twoheadrightarrow{\mathbb F}_p$ be an algebra map such
that $B/\ker\,\nu\,\cong\, {\mathbb F}_p$. Denote by $\tilde{\nu}$
the composite
$${\mathcal P}_B\,\twoheadrightarrow\, {\mathcal
P}_B/({\ker\,\nu}) {\mathcal P}_B\,\hookrightarrow\,
\overline{\mathbb F}_p[x_{ab}^{(k)}\,|\,\,1\le a,b\le d,\, 1\le k\le
r]\,\cong\,{\mathcal P}_A\otimes_A\overline{\mathbb F}_p.$$ Since
$\tilde{\nu}(F)={^{\nu\!}}F$ for all $F\in{\mathcal P}_B$, we have
that ${{^\nu\!}}g_1 \,{{^\nu\!}}f_1+\cdots+ {{^\nu\!}}g_N\,
{^{\nu\!}}f_N=1.$ But then ${\mathcal M}_d(\overline{\mathbb
F}_p)=\varnothing$ for all $d\le D$. Since this contradicts
Proposition~\ref{prop2}, we conclude that ${\mathcal M}_d(\CC)\ne
\varnothing$ for some $d\le D$.
\end{proof}

We are ready to prove the main results of this section.
\begin{theorem}\label{main}
For any primitive ideal $\mathcal I$ of $U(\g)$ with
$\mathcal{VA}({\mathcal I})\,=\,\overline{\mathcal O}_\chi$ there is
a finite dimensional irreducible $U(\g,e)$-module $V$ such that
${\mathcal I}\,=\,{\rm Ann}_{U(\g)}(Q_\chi\otimes_{U(\g,\,e)} V)$.
\end{theorem}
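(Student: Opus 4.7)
The plan is to take the module $V$ furnished by Proposition~\ref{prop3} and verify that it realises $\mathcal I$ as $\mathrm{Ann}_{U(\g)}(Q_\chi\otimes_{U(\g,e)}V)$. Proposition~\ref{prop3} supplies a finite-dimensional irreducible $U(\g,e)$-module $V$ of dimension at most $D=D(\mu)$ on which every $h_{i,\mathbf{c}}$ acts as zero. First I form the generalised Gelfand--Graev $\g$-module $\widetilde V:=Q_\chi\otimes_{U(\g,e)}V$. By Skryabin's equivalence [\cite{Sk}], $\widetilde V$ is irreducible as a $\g$-module, so $\mathcal I':=\mathrm{Ann}_{U(\g)}(\widetilde V)$ is a primitive ideal; by [\cite{P07}, Thm.~3.1(ii)] its associated variety equals $\overline{\mathcal O}_\chi=\mathcal{VA}(\mathcal I)$.

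Next I would establish the inclusion $\mathcal I\subseteq \mathcal I'$. For the generators $u_i$ of $\mathcal I$ chosen in (\ref{2.4}), the decomposition (\ref{u-h}) combined with $h_{i,\mathbf{c}}V=0$ gives
\[
u_i\cdot(1_\chi\otimes v)\,=\,\varphi_\chi(u_i)\otimes v\,=\,\textstyle\sum_{\mathbf{c}\in C}\,X^{\mathbf{c}}\otimes h_{i,\mathbf{c}}v\,=\,0\qquad(v\in V).
\]
To extend this vanishing to all of $\widetilde V=U(\g)(1_\chi\otimes V)$ I would exploit the property (recorded in (\ref{2.4})) that the $A$-span of $\{u_i\mid i\in I\}$ is invariant under the adjoint action of $U_\Z$; consequently $[x,u_i]\in\mathrm{span}_{\CC}\{u_j\mid j\in I\}$ for every $x\in\g$. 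An induction on the PBW-degree of $x\in U(\g)$, using $u_ix=xu_i+[u_i,x]$ together with $u_j(1_\chi\otimes v)=0$ for all $j$, then yields $u_i\cdot(x(1_\chi\otimes v))=0$ for every $x\in U(\g)$ and $v\in V$. Since the $u_i$'s generate the two-sided ideal $\mathcal I$, this forces $\mathcal I\subseteq \mathcal I'$.

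The main obstacle is the last step: upgrading $\mathcal I\subseteq \mathcal I'$ to an equality, knowing that both are primitive ideals sharing the associated variety $\overline{\mathcal O}_\chi$. I expect this to follow from a comparison of multiplicities along $\overline{\mathcal O}_\chi$ (equivalently, Goldie ranks): the multiplicity for $\mathcal I'$ is expressible in terms of $\dim V$ via Skryabin's equivalence together with the results of [\cite{P02}], and it must coincide with the corresponding invariant of $U(\g)/\mathcal I$, so any proper containment $\mathcal I\subsetneq \mathcal I'$ would force the quotient bimodule $\mathcal I'/\mathcal I$ to have strictly smaller Gelfand--Kirillov dimension than $\dim\overline{\mathcal O}_\chi$, contradicting Gabber's equidimensionality theorem. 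This final equality step is identical to the one in [\cite{P07'}]; the genuine novelty of the present proof resides in the characteristic-$p$ constructions of Propositions~\ref{prop2} and \ref{prop3}, which dispose of the rationality hypothesis through the use of a finitely generated $\Z$-subalgebra $A\subset\CC$ and the infinitude of $\pi(A)$ secured by Lemma~\ref{max-ideals}.
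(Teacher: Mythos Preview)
Your approach is essentially the same as the paper's: invoke Proposition~\ref{prop3} to get $V$, form $Q_\chi\otimes_{U(\g,e)}V$, use Skryabin and [\cite{P07}, Thm.~3.1(ii)] to see that $\mathcal I'$ is primitive with the correct associated variety, use (\ref{u-h}) and the $\ad\,\g$-stability of the span of the $u_i$'s to get $\mathcal I\subseteq\mathcal I'$, and then conclude equality.

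The only place your write-up deviates is in the justification of the final step. Your proposed argument via multiplicities, Goldie ranks, and Gabber's equidimensionality theorem is both more elaborate than necessary and not quite on target: Gabber's theorem concerns equidimensionality of the associated variety of a module, which is not what is at stake here. The paper (and [\cite{P07'}]) simply invokes [\cite{BK}, Korollar~3.6] of Borho--Kraft: if $I\subseteq I'$ are primitive ideals of $U(\g)$ with the same Gelfand--Kirillov dimension (equivalently, the same associated variety), then $I=I'$. The underlying reason is that $U(\g)/I$ is a prime Noetherian ring and any nonzero two-sided ideal in it has strictly smaller GK codimension, so $I\subsetneq I'$ would force $\dim\mathcal{VA}(I')<\dim\mathcal{VA}(I)$. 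Since you already note that this step is lifted verbatim from [\cite{P07'}], this is a minor point of attribution rather than a gap.
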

\begin{proof}
By Proposition~\ref{prop3}, there is an irreducible finite
dimensional representation $\rho\colon\,U(\g,e)\to\End V$ such that
${\mathcal I}_{\mathcal W}\subseteq \ker\,\rho$. Associated with
$\rho$ is a representation of $U(\g)$ in ${\rm End}\big(
Q_\chi\otimes_{U\g,\,e)}V\big)$; call it $\tilde{\rho}$. It follows
from Skryabin's theorem [\cite{Sk}] and [\cite{P07}, Thm.~3.1(ii)]
that $\ker\tilde{\rho}$ is a primitive ideal of $U(\g)$ with
$\mathcal{VA}(\ker\tilde{\rho})=\overline{\mathcal{O}}_\chi$. From
(\ref{u-h}) it follows that $$\tilde{\rho}(u_i)(1_\chi\otimes
v)\,=\,\tilde{\rho}(\varphi_\chi(u_i))(1_\chi\otimes v) \,=\,
\textstyle{\sum}_{{\bf c}\in\, C}\,X^{\bf c}\otimes \rho(h_{i,\,{\bf
c}})(v)$$ for all $v\in V$ and $i\in I$. Since ${\mathcal
I}_{\mathcal W}\subseteq \ker\,\rho$, all $\tilde{\rho}(u_i)$
annihilate $1_\chi\otimes V\subset\widetilde{V}$. Since
$1_\chi\otimes V$ generates the $\g$-module
$Q_\chi\otimes_{U(\g,\,e)} V$ and the span of the $u_i$'s is stable
under the adjoint action of $\g$, we have that
$u_i\in\ker\tilde{\rho}$ for all $i\in I$. Since the $u_i$'s
generate the ideal $\mathcal I$, it must be that ${\mathcal
I}\subseteq \ker\tilde{\rho}$. Since the primitive ideals $\mathcal
I$ and $\ker\tilde{\rho}$ have the same associated variety, applying
[\cite{BK}, Korollar~3.6] gives $\mathcal{I}=\ker\tilde{\rho}$.
\end{proof}
\subsection{}\label{last'}
A more invariant definition of the algebra $U(\g,e)$ was given by
Gan--Ginzburg in [\cite{GG}]. Let $\n_\chi=\bigoplus_{i\le
-1}\,\g(i)$ and $\n_\chi':=\bigoplus_{i\le-2}\g(i)$, and denote by
$\widehat{Q}_\chi$ the Kazhdan-filtered $\g$-module
$U(\g)/U(\g)N_\chi'$, where $N_\chi'$ is the left ideal of $U(\g)$
generated by all $x-\chi(x)$ with $x\in \n_\chi'$. Note that
$\widehat{Q}_\chi$ is a $U(\n_\chi)$-bimodule and
$\widehat{Q}_\chi^{\,\ad\,\n_\chi}$ carries a natural algebra
structure. By [\cite{GG}], the algebra
$\widehat{Q}_\chi^{\,\ad\,\n_\chi}$ is canonically isomorphic to
$U(\g,e)$. Denote by $\widehat{\varphi}_\chi$ and $\varphi_\m$ the
canonical projections $U(\g)\twoheadrightarrow \widehat{Q}_\chi$ and
$\widehat{Q}_\chi\twoheadrightarrow Q_\chi$, respectively. The
adjoint action of $G$ on $U(\g)$ gives rise to a rational action of
the reductive part $C(e)=G_e\cap G_f$ of the centraliser $G_e$ on
$\widehat{Q}_\chi$. Clearly, the $\g$-module map
$\widehat{\varphi}_\chi$ is $C(e)$-equivariant and
$\varphi_\m\circ\widehat{\varphi}_\chi=\varphi_\chi$.

Recall from (\ref{4.1}) the Witt basis
$\{z_1',\ldots,z_s',z_1,\ldots,z_s\}$ of $\g(-1)$ and write $Z'^{\bf
b}$ for the monomial $z_1'^{b_1} \cdots z_s'^{b_s}\in U(\g)$, where
${\bf b}=(b_1,\ldots, b_s)\in\Z_+^s$.  Let $\hat{1}_\chi$ be the
image of $1$ in $\widehat{Q}_\chi$. Arguing as in [\cite{Sk}] it is
easy to observe that the monomials $X^{\bf a}Z'^{\bf
b}(\hat{1}_\chi)$ with ${\bf a}\in \Z_+^{d(e)}$ and ${\bf b}\in
\Z_+^s$ form a free basis of the right $U(\g,e)$-module
$\widehat{Q}_\chi$. Note that for any $h_{{\bf a},{\bf b}}\in
U(\g,e)$ we have that $\widehat{\varphi}_\chi\big(X^{\bf a}Z'^{\bf
b}h_{{\bf a},{\bf b}}(\hat{1}_\chi)\big)=X^{\bf a}h_{{\bf a},{\bf
b}}(1_\chi)$ if ${\bf b}={\bf 0}$ and $0$ otherwise.
\begin{lemma}\label{annil}
Let $M$ be any $U(\g,e)$-module and $u\in{\rm
Ann}_{U(\g)}\big(Q_\chi\otimes_{U(\g,\,e)}M\big)$. Then
$\widehat{\varphi}_\chi(u)\,=\,\sum_{{\bf a},\,{\bf b}}\,X^{\bf
a}Z'^{\bf b}h_{{\bf a},{\bf b}}(\hat{1}_\chi)$ for some $h_{{\bf
a},{\bf b}}\in{\rm Ann}_{U(\g,\,e)}\,M$.
\end{lemma}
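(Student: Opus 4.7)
The plan is to reduce the lemma to the statement $\mathrm{Ann}_{U(\g)}(Q_\chi\otimes_{U(\g,e)}M)=\mathrm{Ann}_{U(\g)}(\widehat{Q}_\chi\otimes_{U(\g,e)}M)$, and then to establish this equality.

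First I would use the free right $U(\g,e)$-basis $\{X^{\bf a}Z'^{\bf b}(\hat{1}_\chi)\}$ of $\widehat{Q}_\chi$ recalled just above the lemma to expand $\widehat{\varphi}_\chi(u)=u\cdot\hat{1}_\chi$ uniquely as $\sum X^{\bf a}Z'^{\bf b}h_{{\bf a},{\bf b}}(\hat{1}_\chi)$, which defines the $h_{{\bf a},{\bf b}}\in U(\g,e)$. Tensoring over $U(\g,e)$ with $M$ transports this expansion to
\[
u\cdot(\hat{1}_\chi\otimes m)=\widehat{\varphi}_\chi(u)\otimes m=\sum_{{\bf a},{\bf b}}X^{\bf a}Z'^{\bf b}(\hat{1}_\chi)\otimes h_{{\bf a},{\bf b}}(m)
\]
in $\widehat{Q}_\chi\otimes_{U(\g,e)}M$. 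By freeness of the basis, the conclusion $h_{{\bf a},{\bf b}}\in\mathrm{Ann}_{U(\g,e)}(M)$ for all ${\bf a},{\bf b}$ is equivalent to $u\cdot(\hat{1}_\chi\otimes m)=0$ for every $m\in M$; since the vectors $\hat{1}_\chi\otimes m$ generate $\widehat{Q}_\chi\otimes_{U(\g,e)}M$ as a $U(\g)$-module, this in turn says $u\in\mathrm{Ann}_{U(\g)}(\widehat{Q}_\chi\otimes_{U(\g,e)}M)$.

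One inclusion of annihilators is free: the $\g$-equivariant surjection $\varphi_\m\otimes\mathrm{id}:\widehat{Q}_\chi\otimes M\twoheadrightarrow Q_\chi\otimes M$ yields $\mathrm{Ann}(\widehat{Q}_\chi\otimes M)\subseteq\mathrm{Ann}(Q_\chi\otimes M)$. In particular, applying $\varphi_\m$ to the expansion above kills the terms with ${\bf b}\ne{\bf 0}$ (by the observation stated just before the lemma) and leaves $\varphi_\chi(u)=\sum X^{\bf a}h_{{\bf a},{\bf 0}}(1_\chi)$, so the hypothesis together with the freeness of $Q_\chi$ over $U(\g,e)$ on $\{X^{\bf a}(1_\chi)\}$ immediately gives $h_{{\bf a},{\bf 0}}\in\mathrm{Ann}_{U(\g,e)}(M)$. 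This disposes of the ${\bf b}={\bf 0}$ part of the conclusion.

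The essential task, and the main obstacle, is the reverse inclusion $\mathrm{Ann}_{U(\g)}(Q_\chi\otimes M)\subseteq\mathrm{Ann}_{U(\g)}(\widehat{Q}_\chi\otimes M)$, which is what controls the coefficients $h_{{\bf a},{\bf b}}$ with ${\bf b}\ne{\bf 0}$. I plan to construct a $\g$-stable, Kazhdan-type filtration on $\widehat{Q}_\chi$ refining the $Z'$-degree, whose associated graded is isomorphic to $\gr Q_\chi\otimes_\CC S(\g(-1)^0)$ as a $\gr U(\g)$-$\gr U(\g,e)$-bimodule with $\gr U(\g)$ acting trivially on the second tensor factor. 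The key input is the Gan--Ginzburg identity $z_i'\cdot h(\hat{1}_\chi)=Z'_i(\hat{1}_\chi)\cdot h$ for $h\in U(\g,e)=\widehat{Q}_\chi^{\ad\n_\chi}$, which absorbs the left $\g(-1)^0$-action into the right $U(\g,e)$-action modulo lower-order terms in the Kazhdan filtration. Tensoring with $M$ then exhibits $\widehat{Q}_\chi\otimes_{U(\g,e)}M$ as a filtered $\g$-module whose subquotients are direct sums of copies of $Q_\chi\otimes_{U(\g,e)}M$, so any $u$ killing the latter annihilates every subquotient, and hence annihilates $\widehat{Q}_\chi\otimes M$. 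The delicate point will be verifying the claimed $\g$-equivariance and PBW-compatibility of this filtration at the graded level, which requires careful bookkeeping with the Skryabin basis and the identity above.
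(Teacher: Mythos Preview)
Your reduction to the equality of annihilators
\(\mathrm{Ann}_{U(\g)}(Q_\chi\otimes_{U(\g,e)}M)=\mathrm{Ann}_{U(\g)}(\widehat{Q}_\chi\otimes_{U(\g,e)}M)\)
is valid, and your treatment of the \({\bf b}={\bf 0}\) coefficients via \(\varphi_\m\) is correct and in fact coincides with the final step of the paper's argument. The problem lies in the filtration you propose for the hard inclusion. A \(\g\)-stable filtration of \(\widehat{Q}_\chi\) by \(Z'\)-degree whose graded pieces look like copies of \(Q_\chi\) does not exist: the element \(z_i\in\g(-1)\) satisfies \([z_i,z_j']\in\g(-2)\subset\n_\chi'\) with \(\chi([z_i,z_j'])=-\delta_{ij}\), so left multiplication by \(z_i\) genuinely lowers \(Z'\)-degree. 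For instance
\(z_i\cdot z_i'z_k'(\hat 1_\chi)\) contains the term \(-z_k'(\hat 1_\chi)\), which lies in \(Z'\)-degree \(1\), not \(\ge 2\). Thus the span of basis vectors with \(|{\bf b}|\ge k\) is not a \(\g\)-submodule for \(k\ge 2\), and if one instead takes the \(\g\)-submodule generated by \(\{Z'^{\bf b}\hat 1_\chi:|{\bf b}|\ge k\}\) the filtration collapses (all steps with \(k\ge 1\) coincide with \(\ker\varphi_\m\)). The identity you cite, \(z_i'\cdot h(\hat 1_\chi)=h(z_i'\hat 1_\chi)\), is just \(\g\)-linearity of \(h\) and does not rescue this.

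The paper avoids the issue by exploiting the very relation that breaks your filtration. Since \(\mathrm{Ann}_{U(\g)}(Q_\chi\otimes M)\) is a two-sided ideal it is stable under \(\ad z_i\); applying \(\prod_i(\ad z_i)^{m_i}\) to \(u\) keeps it in the annihilator while, thanks to \([z_i,z_j']\equiv -\delta_{ij}\) on \(\hat 1_\chi\), it differentiates away the \(Z'^{\bf m}\) factor and pushes the offending coefficient \(h_{{\bf a},{\bf m}}\) into the \({\bf b}={\bf 0}\) slot of the new element \(u'\). One then projects via \(\varphi_\m\) exactly as you did and reaches a contradiction. So the Heisenberg-type commutation between the \(z_i\) and the \(z_j'\) is the whole point: rather than trying to filter \(\widehat{Q}_\chi\) around it, the paper uses it directly on the element \(u\).
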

\begin{proof}
Set $\Omega(u)=\{({\bf a},{\bf
b})\in\Z_+^{d(e)}\times\Z_+^s\,|\,\,\,h_{{\bf a},{\bf b}}\not\in{\rm
Ann}_{U(\g,\,e)}\,M\}$ and denote by $\Omega_{\max}(u)$ the set of
all $({\bf a},{\bf b})\in\Omega(u)$ for which the Kazhdan degree of
$X^{\bf a}\in U(\g)$ is maximal possible. Suppose
$\Omega_{\max}\neq\varnothing$ and denote bt $\Delta(u)$ the set of
all ${\bf b}\in {\rm pr}_2(\Omega_{\max}(u))$ for which $h_{{\bf
a},{\bf b}}\not\in{\rm Ann}_{U(\g,\,e)}\,M$ (here ${\rm pr}_2$ is
the second projection $\Z_+^{d(e)}\times\Z_+^s\twoheadrightarrow
\Z_+^s$). Order the elements in  $\Z_+^s$ lexicographically and
denote by $\bf m$ the largest element in $\Delta(u)$. Let ${\bf
a}_1,\ldots,{\bf a}_l$ be all elements in $\Z_+^{d(e)}$ for which
$({\bf a}_i,{\bf m})\in\Omega_{\max}(u)$.

Set $u':=\prod_{i=1}^s(\ad z_i)^{m_i}(u)$, an element of ${\rm
Ann}_{U(\g)}\big(Q_\chi\otimes_{U(\g,\,e)}M\big)$. Since
$\widehat{Q}_\chi$ is a Kazhdan-filtered $\g$-module, we have that
$\Omega_{\max}(u')\subseteq \Omega_{\rm max}(u)$, while it is
immediate from the definition of $\{{\bf a}_1,\ldots,{\bf a}_l\}$
that $({\bf a}_i,{\bf 0})\in \Omega_{\max}(u')$ for all $i\le l$.
Furthermore,
$$\widehat{\varphi}_\chi(u')\,=\,\textstyle{\sum}_{i=1}^l\,\lambda_i X^{{\bf a}_i}h_{{\bf a}_i,{\bf m}}(\hat{1}_\chi)+
\textstyle{\sum}_{({\bf a},{\bf b})\not\in\,\Omega_{\max}(u')}
X^{\bf a}Z'^{\bf b}h'_{{\bf a},{\bf b}}(\hat{1}_\chi),\qquad\
\lambda_i\ne 0,$$ by our choice of $\bf m$. Hence
$(\varphi_\m\circ\hat{\varphi}_\chi)(u')=\sum_{i=1}^l\,\lambda_i
X^{{\bf a}_i}h_{{\bf a}_i,{\bf m}}(1_\chi)+\sum_{{\bf a}\ne{\bf
a}_0}X^{\bf a}h'_{\bf a}(1_\chi)$ for some $h'_{\bf a}\in U(\g,e)$.
As $h_{{\bf a}_i,{\bf m}}\not\in{\rm Ann}_{U(\g,\,e)}\,M$ and
$\lambda_i\ne 0$ for all $i\le l$, we obtain $u'\not\in{\rm
Ann}_{U(\g)}\big(Q_\chi\otimes_{U(\g,\,e)}M\big)$, a contradiction.
This completes the proof.
\end{proof}
\begin{corollary}\label{ideal}
Let $M$ be as in Lemma~\ref{annil} and denote by ${\mathfrak I}_M$
the $U(\g)$-submodule of $\widehat{Q}_\chi$ generated by ${\rm
Ann}_{U(\g,\,e)}\,M\subseteq \widehat{Q}_\chi^{\,\ad \n_\chi}$. Then
$${\rm Ann}_{U(\g)}\big(Q_\chi\otimes_{U(\g,\,e)}M\big)\,=\,
\bigcap_{g\in G}\,(\Ad
g)\big(\widehat{\varphi}_\chi^{\,-1}({\mathfrak I}_M)\big).$$
\end{corollary}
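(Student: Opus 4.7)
The plan is to establish the two inclusions separately, using Lemma~\ref{annil} for $\subseteq$ and the cyclicity of $Q_\chi\otimes_{U(\g,\,e)}M$ (combined with the two-sidedness of the ideal on the right) for $\supseteq$.

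For $\subseteq$, take $u\in{\rm Ann}_{U(\g)}\big(Q_\chi\otimes_{U(\g,\,e)}M\big)$. Since this annihilator is a two-sided ideal, it is $(\Ad G)$-stable, so $(\Ad g^{-1})u$ also lies in the annihilator for every $g\in G$. Applying Lemma~\ref{annil} to $(\Ad g^{-1})u$ gives
$$\widehat{\varphi}_\chi\big((\Ad g^{-1})u\big)\,=\,\sum_{{\bf a},\,{\bf b}}\,X^{\bf a}Z'^{\bf b}h_{{\bf a},{\bf b}}(\hat{1}_\chi)$$
with each $h_{{\bf a},{\bf b}}\in{\rm Ann}_{U(\g,\,e)}\,M$. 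Every summand lies in $\mathfrak{I}_M$ by the very definition of $\mathfrak{I}_M$, so $(\Ad g^{-1})u\in\widehat{\varphi}_\chi^{\,-1}(\mathfrak{I}_M)$, which is to say $u\in(\Ad g)\big(\widehat{\varphi}_\chi^{\,-1}(\mathfrak{I}_M)\big)$.

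For $\supseteq$, I would first observe that the right-hand side is a two-sided ideal of $U(\g)$: each $\widehat{\varphi}_\chi^{\,-1}(\mathfrak{I}_M)$ is a left ideal (being the preimage of a $U(\g)$-submodule of $\widehat{Q}_\chi$ under a $U(\g)$-module map), so the intersection is a left ideal, and by construction it is $(\Ad G)$-stable, which forces it to be two-sided in characteristic zero. By the standard fact that a two-sided ideal annihilating a generating set of a cyclic module annihilates the whole module, it then suffices to verify that every $u\in\widehat{\varphi}_\chi^{\,-1}(\mathfrak{I}_M)$ kills $1_\chi\otimes v$ in $Q_\chi\otimes_{U(\g,\,e)}M$ for every $v\in M$.

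To check this, I would exploit the factorisation $\varphi_\chi=\varphi_\m\circ\widehat{\varphi}_\chi$. Since $\varphi_\m$ is a $U(\g)$-module map sending $\hat{1}_\chi$ to $1_\chi$, the containment $\widehat{\varphi}_\chi(u)\in\mathfrak{I}_M$ yields $u(1_\chi)=\varphi_\chi(u)\in\varphi_\m(\mathfrak{I}_M)$, the latter being the $U(\g)$-submodule of $Q_\chi$ generated by $\{h(1_\chi):h\in{\rm Ann}_{U(\g,\,e)}\,M\}$. So we may write $u(1_\chi)=\sum_i w_ih_i(1_\chi)$ with $w_i\in U(\g)$ and $h_i\in{\rm Ann}_{U(\g,\,e)}\,M$. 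Under the identification of $h(1_\chi)$ with the right action $1_\chi\cdot h$ coming from $U(\g,e)\cong(Q_\chi)^{\ad\,\m}$, the tensor relation over $U(\g,e)$ gives
$$u\cdot(1_\chi\otimes v)\,=\,\sum_i w_i\cdot\big((1_\chi\cdot h_i)\otimes v\big)\,=\,\sum_i w_i\cdot(1_\chi\otimes h_iv)\,=\,0,$$
since $h_iv=0$ for every $i$. No serious technical obstacle is anticipated; the role of the $(\Ad G)$-intersection on the right-hand side is precisely to upgrade the left ideal $\widehat{\varphi}_\chi^{\,-1}(\mathfrak{I}_M)$ to a two-sided ideal, matching the $(\Ad G)$-invariance of the annihilator on the left.
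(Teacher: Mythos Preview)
Your proof is correct and follows essentially the same route as the paper's. Both directions are handled identically in spirit: the inclusion $\subseteq$ comes from Lemma~\ref{annil} plus $(\Ad G)$-invariance of the annihilator, and the inclusion $\supseteq$ from observing that the right-hand side is a two-sided ideal contained in $\widehat{\varphi}_\chi^{\,-1}(\mathfrak{I}_M)$ which kills $1_\chi\otimes M$. Your treatment of this last point---factoring through $\varphi_\m$ and using the bimodule tensor relation---is simply a more explicit unpacking of what the paper compresses into the parenthetical hint ``one should keep in mind that $\n_\chi'\subseteq\m$''.
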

\begin{proof}
Let $I={\rm Ann}_{U(\g)}\big(Q_\chi\otimes_{U(\g,\,e)}\,M\big)$ and
$I'=\bigcap_{g\in G}\,(\Ad
g)\big(\widehat{\varphi}_\chi^{\,-1}({\mathfrak I}_M)\big).$ It
follows from Lemma~\ref{annil} that
$I\subseteq\widehat{\varphi}_\chi^{\,-1}({\mathfrak I}_M)$. Since
$I$ is a two-sided ideal of $U(\g)$, it is invariant under the
adjoint action of $G$. Hence $I\subseteq I'$. On the other hand,
$I'$ is a left ideal of $U(\g)$ contained in
$\widehat{\varphi}_\chi^{\,-1}({\mathfrak I}_M)$ and invariant under
the adjoint action of $G$. Therefore, $I'$ is $(\ad \g)$-stable and
annihilates the subspace $1_\chi\otimes M$ of
$Q_\chi\otimes_{U(\g,\,e)} M$ (one should keep in mind that
$\n_\chi'\subseteq \m$). Since the latter generates the $\g$-module
$Q_\chi\otimes_{U(\g,\,e)} M$, we deduce that $I=I'$.
\end{proof}

Since $C(e)$ stabilises both $\n_\chi$ and $\n_\chi'$, it acts on
$U(\g,e)=\widehat{Q}_\chi^{\,\ad \n_\chi}$ as algebra automorphisms;
see [\cite{P07}, 2.1] for more detail. Thus, we can twist the module
structure $U(\g,e)\times M\rightarrow M$ of any $U(\g,e)$-module $M$
by an element $g\in C(e)$ to obtain a new $U(\g,e)$-module, $M^g$,
with underlying vector space $M$ and the $U(\g,e)$-action given by
$u\cdot m=g(u)\cdot m$ for all $u\in U(\g,e)$ and $m\in M$. Since
the map $\widehat{\varphi}_\chi$ is $C(e)$-equivariant and
$g({\mathfrak I}_M)={\mathfrak I}_{M^g}$, it follows from
Lemma~\ref{ideal} that
\begin{eqnarray}\label{C(e)}
\qquad\quad{\rm
Ann}_{U(\g)}\big(Q_\chi\otimes_{U(\g,\,e)}M\big)\,=\,{\rm
Ann}_{U(\g)}\big(Q_\chi\otimes_{U(\g,\,e)}M^g\big)\qquad\
\,\big(\forall \,g\in C(e)\big).
\end{eqnarray}
\subsection{}\label{last}
As explained in [\cite{P02}, 6.2] and [\cite{P07}, p.~524], the
centre $Z(\g)$ of $U(\g)$ maps isomorphically onto the centre of
$U(\g,e)$. Thus, we may identify $Z(\g)$ with the centre of
$U(\g,e)$. Given an algebra map $\lambda\colon\,Z(\g)\to\CC$ denote
by ${\rm Irr}_\lambda \,U(\g,e)$ the set of all isoclasses of finite
dimensional irreducible $U(\g,e)$-modules with central character
$\lambda$. As we recalled in (\ref{last'}), the reductive part
$C(e)=G_e\cap G_f$ of the centraliser $G_e$ acts on $U(\g,e)$ as
algebra automorphisms. Since $\Ad G$ acts trivially on $Z(\g)$, the
group $C(e)$ acts on each set ${\rm Irr}_\lambda\,U(\g,e)$.

By [\cite{P07}, Sect.~2], the Lie algebra $\g_e(0)$ of $C(e)$ embeds
into $U(\g,e)$ in such a way that the adjoint action of
$\g_e(0)\subset U(\g,e)$ on $U(\g,e)$ coincides with the
differential of the above-mentioned action of $C(e)$ on $U(\g,e)$.
This implies that twisting the module structure $U(\g,e)\times M\to
M$ of a finite dimensional $U(\g,e)$-module $M$ by an element of the
connected component of $C(e)$ does not affect the isomorphism type
of $M$. We thus obtain, for any $d\in\N$, a natural action of the
component group $\Gamma(e)=G_e/G_e^\circ\,\cong\, C(e)/C(e)^\circ$
on the set of all isoclasses of $d$-dimensional $U(\g,e)$-modules.
By the same token, $\Gamma(e)$ acts on each set ${\rm
Irr}_\lambda\,U(\g,e)$.

Let $\mathfrak X$ be the primitive spectrum of $U(\g)$ and denote by
${\mathfrak X}^\lambda$ the set of all $I\in\mathfrak X$ with $I\cap
Z(\g)=\ker\lambda$. Given a coadjoint nilpotent orbit
$\O\subset\g^*$ we write ${\mathfrak X}_\O$ for the set of all
$I\in\mathfrak X$ with $\mathcal{VA}(I)=\overline{\O}$, and set
${\mathfrak X}_\O^\lambda\,:=\,{\mathfrak X}^\lambda\cap{\mathfrak
X}_\O$. It follows from Theorem~\ref{main} that for any algebra
homomorphism $\lambda\colon\,Z(\g)\to\CC$ the map
$$\psi_\lambda\colon\,{\rm Irr}_\lambda\,U(\g,e)\longrightarrow\,{\mathfrak
X}_{\O(\chi)}^\lambda,\qquad\quad [V]\longmapsto\,{\rm
Ann}_{U(\g)}\big(Q_\chi\otimes_{U(\g,\,e)}V\big)$$ is surjective
(here $[V]$ stands for the isomorphism class of a $U(\g,e)$-module
$V$ and $\O(\chi)=({\rm Ad}^*\,G)\chi$). For any finite dimensional
irreducible $U(\g,e)$-module $M$, the $\g$-modules
$Q_\chi\otimes_{U(\g,\,e)} M^g$, where $g\in C(e)$, have the same
annihilator in $U(\g)$; see (\ref{C(e)}). From this it is immediate
that that all fibres of $\psi_\lambda$ are $\Gamma(e)$-stable.

In his talk at the MSRI workshop on Lie Theory in March 2008, the
author conjectured that $\Gamma(e)$ acts transitively on the fibres
of $\psi_\lambda$; that is, the fibres of $\psi_\lambda$ are
precisely the $\Gamma(e)$-orbits in ${\rm Irr}_\lambda\,U(\g,e)$.
This conjecture was known to hold in some special cases; see
[\cite{P07'}] and [\cite{BrK2}]. Very recently, the author's
conjecture was proved in full generality by Losev; see [\cite{Lo1},
Thm.~1.2.2]. We would like to finish this paper by putting on record
the following interesting consequence of Losev's result. For $\g$
semisimple, it solves an old problem posed by Borho and Dixmier in
the early 70s; see [\cite{Dix}, Problem~2].
\begin{theorem}\label{last}
For any complex semisimple Lie algebra $\g$ the primitive spectrum
of $U(\g)$ is a countable disjoint union of quasi-affine algebraic
varieties.
\end{theorem}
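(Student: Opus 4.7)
The plan is to combine Theorem~\ref{main} with Losev's fibre theorem [\cite{Lo1}, Thm.~1.2.2] and the standard fact that isomorphism classes of $d$-dimensional simple representations of a finitely generated associative $\CC$-algebra carry a natural structure of quasi-affine algebraic variety.

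First I would reduce to the case where $\g$ is simple. If $\g=\g_1\oplus\cdots\oplus\g_k$ is a decomposition into simple ideals, then $U(\g)\cong U(\g_1)\otimes_{\CC}\cdots\otimes_\CC U(\g_k)$, and by a classical result of Dixmier [\cite{Dix}] the assignment $(I_1,\ldots,I_k)\mapsto\,\sum_i U(\g_1)\otimes\cdots\otimes I_i\otimes\cdots\otimes U(\g_k)$ defines a bijection $\prod_i{\rm Prim}\,U(\g_i)\stackrel{\sim}{\rightarrow}{\rm Prim}\,U(\g)$. Since a finite Cartesian product of countable disjoint unions of quasi-affine varieties is again such a union (products of quasi-affines being quasi-affine), it suffices to handle $\g$ simple.

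For $\g$ simple, I stratify ${\mathfrak X}=\bigsqcup_{\mathcal O}{\mathfrak X}_{\mathcal O}$ over the finitely many nilpotent coadjoint orbits. Fix $\mathcal O$, choose $e\in\g$ with $\chi=(e,\,\cdot\,)$ representing $\mathcal O$, and denote by ${\rm Irr}^d\,U(\g,e)$ the set of isoclasses of $d$-dimensional irreducible $U(\g,e)$-modules. Theorem~\ref{main} yields a surjection
$$\psi\colon\,\bigsqcup_{d\ge 1}\,{\rm Irr}^d\,U(\g,e)\,\twoheadrightarrow\,{\mathfrak X}_{\mathcal O},\qquad [V]\mapsto\,{\rm Ann}_{U(\g)}(Q_\chi\otimes_{U(\g,e)}V),$$
and Losev's theorem [\cite{Lo1}, Thm.~1.2.2] identifies its fibres with the orbits of the finite component group $\Gamma(e)$ acting on the source. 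The $\Gamma(e)$-action clearly preserves dimension, and it fixes $Z(\g)\subset U(\g,e)$ pointwise (since $Z(\g)=U(\g)^G$ and $\Gamma(e)\subset G$), so it stabilises each ${\rm Irr}^d\,U(\g,e)$. Hence ${\mathfrak X}_{\mathcal O}\,\cong\,\bigsqcup_{d\ge 1}\,\big({\rm Irr}^d\,U(\g,e)/\Gamma(e)\big)$ as sets.

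It remains to equip each ${\rm Irr}^d\,U(\g,e)$ with a natural quasi-affine structure. Since $U(\g,e)$ is generated by the finitely many elements $\Theta_1,\ldots,\Theta_r$ subject to the polynomial relations~(\ref{relations}), the variety ${\rm Rep}^d\,U(\g,e)$ of $d$-dimensional representations embeds as a closed subvariety of $\Mat_d(\CC)^r$. The irreducible locus ${\rm Rep}^d_{\rm irr}\,U(\g,e)$ is Zariski open therein (by Burnside's theorem, it is the non-vanishing locus of certain determinants built from products of the images of the $\Theta_i$'s), and $PGL_d$ acts freely on it with closed orbits (by Schur). By the Artin--Procesi construction of the moduli of semisimple representations of a finitely generated associative algebra, the geometric quotient ${\rm Irr}^d\,U(\g,e):={\rm Rep}^d_{\rm irr}\,U(\g,e)/PGL_d$ exists as a quasi-affine variety: it is an open subvariety of the affine spectrum of the ring of trace invariants. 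As $\Gamma(e)$ acts on $U(\g,e)$ by algebra automorphisms, its induced action on ${\rm Irr}^d\,U(\g,e)$ is algebraic, and the further quotient by a finite group remains quasi-affine. Assembling everything,
$${\mathfrak X}\,=\,\bigsqcup_{\mathcal O}\,\bigsqcup_{d\ge 1}\,\big({\rm Irr}^d\,U(\g,e_{\mathcal O})\big/\Gamma(e_{\mathcal O})\big)$$
is the required countable disjoint union of quasi-affine varieties.

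The main obstacle is the moduli-theoretic step of the final paragraph: one must invoke (rather than reprove) the Artin--Procesi machinery to ensure that the irreducible locus in ${\rm Rep}^d\,U(\g,e)$ descends to a genuine quasi-affine quotient variety, and one must verify that the resulting structure on ${\rm Irr}^d\,U(\g,e)/\Gamma(e)$ is canonical (independent of the chosen generators $\Theta_i$ of $U(\g,e)$) so that the transferred structure on ${\mathfrak X}_{\mathcal O}$ depends only on the orbit $\mathcal O$. Everything else --- the reduction to simple $\g$, the stratification by nilpotent orbit, and the passage to $\Gamma(e)$-orbits --- is essentially formal once Theorem~\ref{main} and Losev's theorem are in hand.
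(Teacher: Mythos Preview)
Your proposal is correct and follows essentially the same route as the paper: reduce to simple $\g$, stratify the primitive spectrum by nilpotent orbit, invoke Theorem~\ref{main} together with Losev's fibre theorem to identify ${\mathfrak X}_{\mathcal O}$ with $\bigsqcup_{d\ge 1}{\rm Irr}^d\,U(\g,e)/\Gamma(e)$, and then use Procesi-type invariant theory to put a quasi-affine structure on each piece. The only cosmetic difference is in the moduli step: the paper first forms the affine categorical quotient $R_d={\rm Specm}\,\CC[{\mathcal M}_d]^{{\rm GL}(d)}$ (whose points are semisimple isoclasses), observes that the reducible locus has closed image so that ${\rm Irr}_d\subseteq R_d$ is open, and then passes to the finite quotient $R_d/\Gamma(e)$; you instead take the geometric quotient of the open irreducible locus by $PGL_d$ and appeal to Artin--Procesi to see it sits as an open subvariety of the same affine trace-invariant spectrum. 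These are two phrasings of the same construction. Your worry about canonicity is harmless here: the theorem only asserts the \emph{existence} of such a decomposition, and in any case the moduli of $d$-dimensional simples depends only on the algebra $U(\g,e)$, not on the chosen presentation.
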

\begin{proof}
Let $\g_1,\ldots,\g_k$ be the simple ideals of the Lie algebra $\g$.
Let $I$ be a primitive ideal of $\g$ and set $I_j:=I\cap U(\g_j)$,
where $1\le j\le k$. Since $I$ is the annihilator in $U(\g)$ of a
simple highest weight module, by Duflo's Theorem, it is
straightforward to see that each $I_j$ is a primitive ideal of
$U(\g_j)$ and $I=\sum_{j=1}^k\,U(\g)I_j$. From this it is immediate
that the primitive spectrum of $U(\g)$ is the direct product of the
primitive spectra of the $U(\g_j)$'s. Thus, in proving the theorem
we may assume that $\g$ is simple. Since there are finitely many
coadjoint nilpotent orbits in $\g^*$, it suffices to show that
${\mathfrak X}_{\O(\chi)}$ is a countable union of quasi-affine
algebraic varieties.

The group ${\rm GL}(d)$ acts on $\Mat_d(\CC)^r$ by simultaneous
conjugations and preserves its Zariski closed subset ${\mathcal
M}_d$ defined in (\ref{6.6}). Since ${\rm GL}(d)$ is a reductive
group, the invariant algebra $\CC[{\mathcal M}_d]^{{\rm GL}(d)}$ is
finitely generated and the $\CC$-points of the affine variety
$R_d:={\rm Specm}\big(\CC[{\mathcal M}_d]^{{\rm GL}(d)}\big)$
parametrise the closed ${\rm GL}(d)$-orbits in ${\mathcal M}_d$.
Moreover, the morphism $\pi_d\colon\,{\mathcal M}_d\to R_d$ induced
by inclusion $\CC[{\mathcal M}_d]^{{\rm GL}(d)}\hookrightarrow
\CC[{\mathcal M}_d]$ is surjective and takes the ${\rm
GL}(d)$-stable closed subsets of ${\mathcal M}_d$ to closed subsets
of $R_d$; see [\cite{Kr}, Ch.~II, 3.2], for example. It follows from
Procesi's results on invariants of $r$-tuples of $d\times d$
matrices that the closed ${\rm GL}(d)$-orbits in ${\mathcal M}_d$
are in 1-1 correspondence with the equivalence classes of {\it
semisimple} $d$-dimensional matrix representations of $U(\g,e)$.
Thus, the $\CC$-points of $R_d$ can be identified with the
isoclasses of semisimple $d$-dimensional $U(\g,e)$-modules.

It is well-known (and easily seen) that the set of all reducible
$d$-dimensional matrix representations of $U(\g,e)$ is Zariski
closed in ${\mathcal M}_d$. Since this set is also ${\rm
GL}(d)$-stable, the above-mentioned properties of $\pi_d$ show that
the subset ${\rm Irr}_d\subseteq R_d$ consisting of the isoclasses
of {\it irreducible} $d$-dimensional $U(\g,e)$-modules is Zariski
open in $R_d$. As we mentioned earlier, the component group
$\Gamma(e)$ acts on $R_d$ and preserves its open subset ${\rm
Irr}_d$. As $\Gamma(e)$ is a finite group, the quotient space
$R_d/\Gamma(e)$ is an affine variety (the coordinate algebra of
$R_d/\Gamma(e)$ is nothing but the invariant algebra
$\CC[R_d]^{\Gamma(e)}$). Furthermore, the quotient morphism
$\pi_{\Gamma(e)}\colon\,R_d\twoheadrightarrow R_d/\Gamma(e)$ is open
in the Zariski topology. Since ${\rm Irr}_d$ is open in $R_d$, the
set $\pi_{\Gamma(e)}({\rm Irr}_d)={\rm Irr}_d/\Gamma(e)$ is open in
$R_d/\Gamma(e)$.

Thus, each orbit space ${\rm Irr}_d/\Gamma(e)$ is a quasi-affine
variety. On the other hand, it follows from Theorem~\ref{main} and
[\cite{Lo1}, Thm.~1.2.2] that there is a bijection
$$\textstyle{\bigsqcup}_{d\ge 0}\,\big({\rm Irr}_d/\Gamma(e)\big)\,\stackrel{\sim}
{\longrightarrow}\,{\mathfrak X}_{\O(\chi)}.$$ Since this holds for
any coadjoint nilpotent orbit in $\g^*$, the primitive spectrum of
$U(\g)$ is a countable union of quasi-affine algebraic varieties.
\end{proof}

\end{document}